\def\theequation{\@arabic\c@equation}
\newcommand{\im}{\text{Im}}
\newcommand{\dd}{d}
\newcommand{\bbN}{{\mathbb{N}}}
\newcommand{\bbR}{{\mathbb{R}}}
\newcommand{\bbT}{{\mathbb{T}}}
\newcommand{\bbP}{{\mathbb{P}}}
\newcommand{\bbZ}{{\mathbb{Z}}}
\newcommand{\bbE}{{\mathbb{E}}}
\newcommand{\bbC}{{\mathbb{C}}}
\newcommand{\cB}{{\mathcal B}}
\newcommand{\cD}{{\mathcal D}}
\newcommand{\cE}{{\mathcal E}}
\newcommand{\cF}{{\mathcal F}}
\newcommand{\cI}{{\mathcal I}}
\newcommand{\cK}{{\mathcal K}}
\newcommand{\cM}{{\mathcal M}}
\newcommand{\cO}{{\mathcal O}}
\newcommand{\cU}{{\mathcal U}}
\newcommand{\bfi}{{\bf i}}
\newcommand{\no}{\nonumber}
\newcommand{\lb}{\label}
\newcommand{\ol}{\overline}
\newcommand{\wti}{\widetilde  }
\newcommand{\unif}{\text{unif}}
\newcommand{\1}{[1]}
\newcommand{\rank}{\text{\rm{rank}}}
\newcommand{\Ar}{\text{\rm{Ar}}}
\newcommand{\ac}{\text{\rm{ac}}}
\newcommand{\singc}{\text{\rm{sc}}}
\newcommand{\pp}{\text{\rm{pp}}}
\newcommand{\AC}{\text{\rm{AC}}_{\loc}}
\newcommand{\hatt}{\widehat}
\newcommand{\dist}{\operatorname{dist}}
\newcommand{\SL}{\mathrm{SL}(2,\bbR)}
\numberwithin{equation}{section}
\renewcommand{\det}{\operatorname{det}}
\newcommand{\loc}{\operatorname{loc}}
\newcommand{\dom}{\operatorname{dom}}
\newcommand{\tr}{\operatorname{Tr}}
\newcommand{\re}{\operatorname{Re}}
\newcommand{\supp}{\operatorname{supp}}
\newcommand{\ess}{\operatorname{ess}}
\newcommand{\spec}{\operatorname{Spec}}
\renewcommand{\Re}{\operatorname{Re }}
\renewcommand\Im{\operatorname{Im}}
\newcommand{\Int}{\operatorname{int}}
\newtheorem{theorem}{Theorem}[section]
\newtheorem{hypothesis}[theorem]{Hypothesis}
\newtheorem{lemma}[theorem]{Lemma}
\newtheorem{corollary}[theorem]{Corollary}
\newtheorem{proposition}[theorem]{Proposition}
\theoremstyle{definition}
\newtheorem{example}[theorem]{Example}
\newtheorem{remark}[theorem]{Remark}
\begin{document} 
	
\numberwithin{equation}{section}
\allowdisplaybreaks

\title[Schr\"odinger operators with locally $H^{-1}$ potentials]{Spectral properties of Schr\"odinger operators with locally $H^{-1}$ potentials}

\author[M.\ Luki{\'c}]{Milivoje Luki{\'c}}
\address{ Department of Mathematics, Rice University, Houston, TX 77005, USA}
\email{milivoje.lukic@rice.edu}
\thanks{M.L.\ was supported in part by NSF grants DMS--1700179 and DMS--2154563.}

\author[S.\ Sukhtaiev]{Selim Sukhtaiev}
\address{Department of Mathematics and Statistics,
	Auburn University, Auburn, AL 36849, USA}
\email{szs0266@auburn.edu}
\thanks{S.S. was supported in part by the Research Support Program grant provided by the Office of the Vice President for Research and Economic Development at Auburn University}

\author[X.\ Wang]{Xingya Wang}
\address{ Department of Mathematics, Rice University, Houston, TX 77005, USA}
\email{xw62@rice.edu}


\date{\today}
\keywords{Schr\"odinger operators, singular potentials, spectral type, decaying potentials}

\begin{abstract}
We study half-line Schr\"odinger operators with locally $H^{-1}$ potentials. In the first part, we focus on a general spectral theoretic framework for such operators, including a Last--Simon-type description of the absolutely continuous spectrum and sufficient conditions for different spectral types. In the second part, we focus on potentials which are decaying in a local $H^{-1}$ sense; we establish a spectral transition between short-range and long-range potentials and an $\ell^2$ spectral transition for sparse singular potentials. The regularization procedure used to handle distributional potentials is also well suited for controlling rapid oscillations in the potential; thus, even within the class of smooth potentials, our results apply in situations which would not classically be considered decaying or even bounded.
\end{abstract}

\maketitle

{\scriptsize{\tableofcontents}}


\section{Introduction}

Schr\"odinger operators in one dimension $H_V = - \frac{d^2}{dx^2} +V$ are often considered in the setting of locally $L^2$ or locally $L^1$ potentials; however, there are several reasons to investigate more general potentials. One is the ubiquity of non-integrable singularities such as Coulomb- or $\delta$-type potentials in models from mathematical physics; another is the Lax pair representation of the KdV equation, where $H^{-1}(\bbR)$ and $H^{-1}(\bbT)$ is the optimal regularity for well-posedness \cite{MR2267286,MR3990604}. Non-integrable singularities are often studied by specialized methods such as those for the Kronig--Penney model, and inverse scattering arguments in the distributional setting are considered in ways that circumvent the underlying Schr\"odinger operators. One of the goals of this paper is to extend some robust techniques in spectral theory to the greater generality of locally $H^{-1}$ potentials, defined precisely below.

Schr\"odinger and Sturm--Liouville operators with distributional coefficients are often treated via the regularization method introduced in the pioneering work of Savchuk, Shkalikov \cite{MR1756602}. This approach has materialized into the main tool in the spectral theory of ordinary differential operators with measure and distributional coefficients. Indeed, it was employed, for example, by  Eckhardt, Teschl \cite{ET13} in the setting of measure coefficients; by Eckhardt, Gesztesy, Roger, Teschl \cite{MR3145132, MR3046408, EGNT13} for $L^1_{\loc}((a,b))$ four coefficient Sturm–Liouville operators; by Eckhardt, Kostenko, Malamud, Teschl \cite{MR3200376} for $\delta'$ potentials supported on Cantor sets; by Hryniv, Mykytyuk \cite{HrMyk01, MR2978191} for periodic singular potentials $H^{-1}_{\text{per}}(\bbR)=H^{-1}(\bbT)$; and by many other authors, see \cite{EGNT13} for an extensive reference list. Most of the papers in this direction address foundational questions such as self-adjointness, Weyl--Titchmarsh theory, spectral decomposition, as well as some inverse spectral problems. We emphasize that the study of spectral types such as in the current paper, and of the associated dynamics for operators with singular coefficients, have received much less attention and have been mostly restricted to periodic \cite[III.2.3]{MR2105735} and some ergodic \cite{MR4207172, MR4117490, MR4217910, MR797277}  Hamiltonians modeling point interactions.

In particular, Hryniv--Mykytyuk  \cite{HrMyk01, MR2978191} introduced a class of uniformly locally $H^{-1}$ potentials on $\bbR$ by the condition
\[
\sup_{n} \lVert V \phi_n \rVert_{H^{-1}(\bbR)} < \infty,
\]
with the help of compactly supported $H^{1}$ multipliers
\[
\phi_n(x) = \begin{cases}
 1 - 2 \lvert x-n \rvert^2  & \lvert x - n \rvert \le 1/2 \\
2 (\lvert x - n \rvert - 1 )^2 & 1/2 < \lvert x - n \rvert \le 1 \\
0 & 1 < \lvert x - n \rvert 
\end{cases}
\]
and showed that real distributions in this class are precisely those with a representation
\[
V = \sigma' + \tau,
\]
where $\sigma, \tau$ are real-valued functions on $\bbR$ such that
\begin{equation}\label{sigmatauboundedness}
\sup_x \int_x^{x+1} \sigma(t)^2 \,dt < \infty, \qquad \sup_x \int_x^{x+1} \lvert \tau(t) \rvert \,dt < \infty.
\end{equation}
Note that this class includes the potentials $V \in H^{-1}(\bbR)$ and $V \in H^{-1}(\bbT)$ (when viewed as periodic distributions on $\bbR$). In particular, the study of Schr\"odinger operators with locally $H^{-1}$ potentials helps to bridge spectral theory with scattering arguments. 
This decomposition is related to the Miura transformation and the Riccati representation \cite{Korotyaev,KappelerTopalov} for periodic $V$, in which every $V \in H^{-1}(\bbT)$ with zero average is represented uniquely in the form $V = \sigma' + \sigma^2 - \int_\bbT \sigma^2(t) \,dt$. In the non-periodic case, in the construction of \cite{HrMyk01}, $\tau$ takes the role of a local average, so the decomposition really requires two functions.

Several classes of singular potentials are modeled by a suitable choice of $\sigma, \tau$.  For example, a   Coulomb-type term $|x-x_0|^{-1}$, $x_0\in (0,\infty)$ is realized by setting $\sigma(x)=\log |x-x_0|$, $\tau(x)=0$, and the point interaction $\delta(x-x_0)$ is realized by the characteristic function $\sigma(x)=\chi_{[x_0,\infty)}$ and $\tau(x)=0$. 

\begin{remark}\label{remarklocalsize}
Of course, the decomposition $V = \sigma' + \tau$ is not unique; the procedure in \cite{HrMyk01} provides $\sigma$, $\tau$ such that 
\[
C^{-1} \sup_x \left( \lVert \sigma \chi_{[x,x+1)} \rVert_2 + \lVert \tau\chi_{[x,x+1)} \rVert_1 \right) \le \lVert V \rVert_{H^{-1}_{\unif}(\bbR)} \le C \sup_x \left( \lVert \sigma \chi_{[x,x+1)} \rVert_2 + \lVert \tau\chi_{[x,x+1)} \rVert_1 \right)
\]
with some universal constant $C$ (the second inequality is general; the first is a consequence of the choice of $\sigma,\tau$ starting from $V$). Accordingly, the quantity $\lVert \sigma \chi_{[x,x+1)} \rVert_2 + \lVert \tau\chi_{[x,x+1)} \rVert_1$ is interpreted as the local size of the potential.
\end{remark}

By Dirichlet decoupling and Weyl matrix arguments, many spectral properties of Schr\"odinger operators on $\bbR$ are reduced to spectral properties of half-line Schr\"odinger operators. For this reason, spectral properties are often naturally considered in the half-line setting. In this paper, we consider half-line Schr\"odinger operators with real-valued distributional potentials $V = \sigma' + \tau$. The formal rewriting
\[
-u'' + V u = - (u' - \sigma u)' - \sigma u' + \tau u = - (u' - \sigma u)' - \sigma (u' - \sigma u) + ( \tau - \sigma^2) u
\]
produces Schr\"odinger operators as follows:

\begin{hypothesis}\lb{pot} Denote $\bbR_+ = (0,\infty)$ and assume that $\sigma, \tau :\bbR_+\rightarrow\bbR$ obey \eqref{sigmatauboundedness}. Let
 \[
 u^{\1}:=u'-\sigma u
 \]
 denote the quasi-derivative of $u\in \AC(\bbR_+)$ and introduce 
\begin{align}
&\mathfrak D:=\{u\in \AC(\bbR_+): u^{\1}\in \AC(\bbR_+)\},\\
&\ell u:=-(u^{\1})' - \sigma u^{\1} + (\tau-\sigma^2)u,\qquad u\in\mathfrak D. \lb{difex}
\end{align} 
\end{hypothesis}

This leads to self-adjoint operators on the Hilbert space $L^2(\bbR_+)$ with a regular endpoint at $0$, limit point at $\infty$, given by
\[
 \dom(H^\alpha):=\{u\in L^2(\bbR_+): u\in\mathfrak D,\, \ell u\in L^2(\bbR_+), \, u(0)\cos(\alpha) +u^{\1}(0)\sin(\alpha)=0 \}
\]
where $\alpha$ parametrizes the boundary condition at $0$. We will discuss their self-adjointness and corresponding quadratic forms in Section~\ref{section2}. Note that this is consistent with standard ways of defining the operator if the potential is locally integrable (corresponding to $\sigma=0$) or with $\delta$-singularities (corresponding to jumps in $u'$), see \cite{HrMyk01}.

Using the quasi-derivative, the eigenfunction equation can be written as a first-order system for $\binom{u^{[1]}}u$. This is encoded by a family of transfer matrices $T(z,x)$ which is locally absolutely continuous in $x$ and solves the initial value problem
\[
\partial_x T(z,x) =\begin{pmatrix}
-\sigma(x) & \tau(x) - \sigma(x)^2 - z\\
1 & \sigma(x)
\end{pmatrix} T(z,x), \qquad T(z,0) = I.
\]
There is a corresponding Weyl function $m_\alpha$ and a canonical spectral measure $\mu^\alpha$. We will provide all definitions in Section 2; for the purpose of this introduction, it suffices to know that $\mu^\alpha$ is a maximal spectral measure for $H^\alpha$, and we are using it to make precise statements about the spectral type of $H^\alpha$. We will use the Lebesgue decomposition
\[
\mu^\alpha = \mu^\alpha_\ac + \mu^\alpha_\singc + \mu^\alpha_\pp.
\]

One of the goals of this paper is to establish sufficient conditions for different spectral types, including a criterion for a.c.\ spectrum which extends the results of  Last--Simon \cite{MR1666767} for locally integrable $V$. One is a description of an essential support for the a.c.\ spectrum in terms of Cesar\`o-boundedness of the transfer matrices: 

\begin{theorem}\lb{thm2.10} Assume Hypothesis \ref{pot}. Then, for arbitrary $\alpha\in[0,\pi)$, the set
\begin{align}
\Sigma_{\ac}:={\left\{E\in\bbR\Big| \liminf_{l\rightarrow\infty}\frac{1}{l}\int_{0}^{l}\|T(E;x)\|^2dx<\infty\right\}} \lb{sigmaac}
\end{align}
is an essential support for the a.c.\ spectrum of $H^\alpha$ in the sense that $\mu^\alpha_\ac$ is mutually absolutely continuous with the measure $\chi_{\Sigma_\ac}(E) \,dE$. In particular,
\[
\spec_{\ac}(H^{\alpha})=\overline{\Sigma_{\ac}}^{\ess}.
\]
\end{theorem}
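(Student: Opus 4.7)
My plan is to follow the Last--Simon strategy of \cite{MR1666767} from the locally integrable setting, with careful attention to the role of the quasi-derivative $u^{[1]}$ in the distributional case. The overall goal is to relate Ces\`aro boundedness of the transfer matrices $T(E;x)$ to boundary behavior of the Weyl function $m_\alpha$, and then to the a.c.\ part of the spectral measure.

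\textbf{Step 1: Weyl function and spectral measure.} I would first set up the Weyl $m_\alpha$-function machinery using the material of Section~\ref{section2}. For $z \in \bbC_+$, $m_\alpha(z)$ is defined from the unique-up-to-scaling $L^2(\bbR_+)$ solution $u_+(z,\cdot)$ of $\ell u = z u$ via $u_+(z,\cdot)$ and its quasi-derivative at $0$. The spectral measure $\mu^\alpha$ arises as the boundary measure of a Herglotz transform of $m_\alpha$, so that the a.c.\ part $\mu^\alpha_\ac$ is mutually absolutely continuous with $\lim_{\epsilon \downarrow 0} \Im m_\alpha(E+i\epsilon)\,dE$, restricted to the set where this limit exists, is finite, and strictly positive.

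\textbf{Step 2: Jitomirskaya--Last inequalities.} The main technical step is to establish a two-sided bound of Jitomirskaya--Last type in the current distributional framework. Defining a length scale $L(\epsilon)$ implicitly by
\[
\int_0^{L(\epsilon)} \|T(E;x)\|^2\,dx = \frac{1}{\epsilon},
\]
I aim to show that there exist constants $C_1, C_2 > 0$, uniform for $E$ in a bounded set, so that for all sufficiently small $\epsilon > 0$,
\[
C_1 \frac{1}{L(\epsilon)}\int_0^{L(\epsilon)} \|T(E;x)\|^2\,dx \,\le\, \Im m_\alpha(E+i\epsilon) + \frac{1}{\Im m_\alpha(E+i\epsilon)} \,\le\, C_2 \frac{1}{L(\epsilon)}\int_0^{L(\epsilon)} \|T(E;x)\|^2\,dx.
\]
The proof would adapt \cite{MR1666767} using the first-order system $\partial_x T(z,x) = A(x,z)T(z,x)$ from the statement of Hypothesis~\ref{pot}. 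The key point is that this system is still symplectic (so Wronskian and variation-of-parameters identities persist), and the uniform local bounds \eqref{sigmatauboundedness} together with Remark~\ref{remarklocalsize} provide the comparability between pointwise transfer matrix norms and their integrated versions over unit intervals that replaces the Gronwall-type bounds used in the $L^1_\loc$ case.

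\textbf{Step 3: Conclusion.} With the Jitomirskaya--Last bound in hand, finiteness of $\liminf_{L \to \infty} \frac{1}{L}\int_0^L \|T(E;x)\|^2\,dx$ at $E$ is equivalent to both $\limsup_{\epsilon \downarrow 0} \Im m_\alpha(E + i\epsilon) < \infty$ and $\liminf_{\epsilon \downarrow 0} \Im m_\alpha(E + i\epsilon) > 0$, the defining property of points in an essential support of $\mu^\alpha_\ac$. Running this equivalence in both directions and using that $\mu^\alpha_\ac$ is obtained from boundary values of $\Im m_\alpha$ via Lebesgue measure shows that $\Sigma_\ac$ and any essential support of $\mu^\alpha_\ac$ agree up to Lebesgue-null sets. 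The identity $\spec_\ac(H^\alpha) = \overline{\Sigma_\ac}^{\ess}$ then follows from the general fact that the a.c.\ spectrum is the essential closure of any essential support of the a.c.\ spectral measure.

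\textbf{Main obstacle.} The principal difficulty is Step 2. Classical Jitomirskaya--Last bounds trade pointwise values of solutions for $L^2$-averages, and this exchange relies on integrability of the potential. In the present $H^{-1}_\loc$ setting one must replace pointwise bounds on $u$ by bounds on the full column vector $\binom{u^{[1]}}{u}$ — i.e., on $\|T(E;x)\|$ — and exploit the uniform local $L^2$ (resp.\ $L^1$) control on $\sigma$ (resp.\ $\tau$) to recover the comparability between quasi-derivatives and ordinary derivatives on unit intervals. Verifying that the resulting modified estimates close up and yield both sides of the Jitomirskaya--Last inequality is where the bulk of the technical work lies.
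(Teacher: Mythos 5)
The gap is in your Step 2, which you yourself defer as ``the main obstacle'': the two-sided inequality you posit is not the Jitomirskaya--Last inequality, and as stated it is false. The JL inequality compares $\lvert m_\alpha(E+i\epsilon)\rvert$ with the ratio $\lVert\theta_{\alpha,E}\rVert_{L(\epsilon)}/\lVert\phi_{\alpha,E}\rVert_{L(\epsilon)}$ at the scale defined by $\lVert\phi_{\alpha,E}\rVert_{L}\lVert\theta_{\alpha,E}\rVert_{L}=1/(2\epsilon)$ (norms over $[0,L]$); it controls the \emph{modulus} of $m_\alpha$, not $\Im m_\alpha$, and not Ces\`aro means of $\lVert T\rVert^2$. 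Your upper bound already fails for the free Dirichlet operator at $E=-k_0^2<0$: there $m_0(E+i\epsilon)=i\sqrt{E+i\epsilon}$, so $\Im m_0(E+i\epsilon)\sim\epsilon/(2k_0)$ and the left-hand side is of order $1/\epsilon$, while $\lVert T(E;x)\rVert\asymp e^{k_0x}$ gives $L(\epsilon)\asymp\log(1/\epsilon)$ and a right-hand side of order $1/(\epsilon\log(1/\epsilon))$, so no constant $C_2$ works. More seriously, the lower-bound half --- the one you use to conclude that a.e.\ $E$ in the essential support lies in $\Sigma_{\ac}$ --- would imply $\limsup_{l\to\infty}\frac1l\int_0^l\lVert T(E;x)\rVert^2\,dx<\infty$ for a.e.\ such $E$ (since $L(\epsilon)$ sweeps continuously through all large scales), a statement strictly stronger than the theorem; this is precisely why the statement, like Last--Simon's, is formulated with $\liminf$. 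Neither Last--Simon's method nor anything in this paper yields such a pointwise bound, and you offer no argument for it beyond ``adapt \cite{MR1666767}''.

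For comparison, the paper proves the inclusion (essential support) $\subset\Sigma_{\ac}$ by an averaged bound plus Fatou rather than a pointwise inequality: with $d\rho=e^{-2\gamma|E|}\min\{\mu^0,\mu^{\pi/2}\}$, which is equivalent to $\mu^\alpha_{\ac}$, one shows $\int_{\bbR}\int_{x-1}^{x+1}\lVert T(E;t)\rVert^2\,dt\,d\rho(E)\le C$ uniformly in $x$, using the spectral representation of the Green's function, the resolvent comparison \eqref{resineq}, and the eigensolution estimates of Lemma~\ref{lem2.6} to control the quasi-derivative entries of $T$; Fatou's lemma then gives only $\liminf$-finiteness $\rho$-a.e., which is all that is claimed. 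The reverse inclusion $\Sigma_{\ac}\subset$ (essential support) is obtained through subordinacy: Lemmas~\ref{lemma.subordinacy1} and~\ref{thm2.10new} identify the non-subordinacy set $N(\ell)$ as an essential support, and a Wronskian/Cauchy--Schwarz argument combined with Theorem~\ref{theoremweightedL2eigenfunction} shows that finiteness of the $\liminf$ rules out subordinacy of $\phi_{\alpha,E}$. Your Steps 1 and 3 and your instinct that the quasi-derivative must be controlled via the uniform local bounds on $\sigma,\tau$ are sound (that is exactly Lemma~\ref{lem2.6}), but the proof closes only by replacing your claimed two-sided $\Im m$ inequality with these two one-sided arguments.
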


Above we denoted the essential support of a Borel set $S$ by
\[
\overline{S}^{\ess}:=\{E\in\bbR: |S\cap(E-\varepsilon, E+\varepsilon)|>0, \forall \varepsilon>0\}.
\]
A closely related result gives a sufficient criterion for absence of a.c.\ spectrum:

\begin{theorem}\lb{cor2.11}
	Assume Hypothesis \ref{pot} and fix arbitrary $\alpha\in[0,\pi)$. Let $\cF\subset \bbR$ be a measurable set and suppose there exist sequences $\{x_j\}_{j=1}^{\infty}\subset \bbR_+, \{y_j\}_{j=1}^{\infty}\subset \bbR_+$ such that for Lebesgue almost every $E\in\cF$,
	\begin{equation}\lb{tnjass}
	\lim\limits_{j\rightarrow\infty} \|T(E;x_j, y_j)\|=\infty.
	\end{equation}
	Then, $\mu^{\alpha}_{ac}(\cF)=0$.
\end{theorem}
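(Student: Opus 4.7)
The plan is to apply Theorem \ref{thm2.10}: since $\mu^\alpha_\ac$ is mutually absolutely continuous with $\chi_{\Sigma_{\ac}}(E)\,dE$, the claim $\mu^\alpha_\ac(\cF)=0$ is equivalent to $|\cF\cap\Sigma_{\ac}|=0$, which I would establish by contradiction; assume therefore $|\cF\cap\Sigma_{\ac}|>0$.

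The first step converts the two-point hypothesis into a one-point growth statement. The coefficient matrix of the ODE for $T(z,\cdot)$ is trace-free, so Liouville's formula gives $\det T(E;x)=1$; for $2\times 2$ matrices of determinant one, $\|M^{-1}\|=\|M\|$, whence using the cocycle identity $T(E;x_j,y_j)=T(E;y_j)T(E;x_j)^{-1}$,
\[
\|T(E;x_j,y_j)\|\leq \|T(E;y_j)\|\cdot\|T(E;x_j)\|.
\]
Thus \eqref{tnjass} forces $\|T(E;x_j)\|\cdot\|T(E;y_j)\|\to\infty$ for a.e.\ $E\in\cF$. Since two nonnegative sequences with bounded $\limsup$ yield a product with bounded $\limsup$, we conclude that for a.e.\ $E\in\cF$ at least one of $\limsup_j\|T(E;x_j)\|$, $\limsup_j\|T(E;y_j)\|$ is infinite. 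Splitting $\cF$ into two measurable pieces accordingly and relabeling reduces the situation to the existence of a single sequence $z_j\in\bbR_+$ with $\limsup_j\|T(E;z_j)\|=\infty$ on a positive-measure set $\cF_*\subset\cF\cap\Sigma_{\ac}$.

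The core step, and the main obstacle, is to turn this one-point growth into absence of a.c.\ density a.e.\ on $\cF_*$. The natural mechanism is a Jitomirskaya--Last--type quantitative subordinacy estimate controlling $\Im m_\alpha(E+i\epsilon)$ by transfer matrix norms at matched energy/spatial scales — schematically, a pointwise bound of the form
\[
\limsup_{\epsilon\to 0^+}\Im m_\alpha(E+i\epsilon)\;\lesssim\;\liminf_{j\to\infty}\|T(E;z_j)\|^{-2}
\]
along suitably coupled scales $\epsilon_j\downarrow 0$, $z_j\uparrow \infty$. Combined with the a.e.\ identity $\tfrac{d\mu^\alpha_\ac}{dE}(E)=\tfrac{1}{\pi}\lim_{\epsilon\to 0^+}\Im m_\alpha(E+i\epsilon)$, this forces the a.c.\ density to vanish a.e.\ on $\cF_*$, contradicting $\cF_*\subset\Sigma_{\ac}$. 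An Egoroff-type regularization on $\cF_*$ may be needed to pass from the $E$-dependent divergence of $\|T(E;z_j)\|$ to a uniform matching of scales on a further positive-measure subset.

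The substantive technical content is therefore verifying that the quantitative subordinacy machinery in the spirit of \cite{MR1666767} transfers to the distributional framework of Hypothesis~\ref{pot}: the quasi-derivative formalism must preserve the Weyl disc estimates and the correspondence between solution $L^2$-norms and transfer matrix norms at matched scales, and the boundary-value and limit manipulations underlying the definitions of $m_\alpha$ and $\mu^\alpha$ from Section~\ref{section2} must remain valid. Once these technical pieces are in place, the argument above reduces Theorem~\ref{cor2.11} to Theorem~\ref{thm2.10}.
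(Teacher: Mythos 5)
There is a genuine gap, and it lies exactly where you flag "the core step." Your reduction to one\-/point growth discards the essential strength of the hypothesis: from $\lim_j\lVert T(E;x_j,y_j)\rVert=\infty$ you only extract that $\limsup_j\lVert T(E;x_j)\rVert=\infty$ or $\limsup_j\lVert T(E;y_j)\rVert=\infty$, and that conclusion is too weak to kill absolutely continuous spectrum. The Last--Simon criterion (Theorem \ref{thm2.10}) only requires $\liminf_l \frac1l\int_0^l\lVert T(E;x)\rVert^2dx<\infty$ on an essential support of $\mu^\alpha_\ac$; unboundedness of $\lVert T(E;\cdot)\rVert$ along a sequence is perfectly compatible with $E\in\Sigma_{\ac}$, and counterexamples to the Schr\"odinger conjecture show that transfer matrices can be unbounded on a positive\-/measure subset of the a.c.\ spectrum. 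Consequently the schematic subordinacy bound you propose, $\limsup_{\epsilon\downarrow 0}\Im m_\alpha(E+i\epsilon)\lesssim\liminf_j\lVert T(E;z_j)\rVert^{-2}$, cannot hold in any form: it would imply that one\-/point unboundedness along any sequence forces zero a.c.\ density, which is false. So the step you defer to "Jitomirskaya--Last machinery transferring to the distributional setting" is not a technical verification but a claim that is wrong as stated; the whole point of the two\-/parameter hypothesis \eqref{tnjass} is that it is strictly stronger than one\-/point unboundedness, and any proof must use both endpoints jointly.

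The paper's proof does exactly that, and is much more direct. It recycles the estimate \eqref{axtest1} established inside the proof of Theorem \ref{thm2.10}, namely $\int_{\bbR}\int_{x-1}^{x+1}\lVert T(E;t)\rVert^2\,dt\,d\rho(E)\le C$ uniformly in $x$, for a measure $\rho$ equivalent to $\mu^\alpha_\ac$ (built from the Green's function bound \eqref{resineq} and the eigensolution estimates of Lemma \ref{lem2.6}). Submultiplicativity $\lVert T(E;t,s)\rVert\le\lVert T(E;t)\rVert\,\lVert T(E;s)\rVert$ and Cauchy--Schwarz in $L^2(d\rho)$ then give a bound on $\int_{\bbR}\int_{x-1}^{x+1}\int_{y-1}^{y+1}\lVert T(E;t,s)\rVert^2\,dt\,ds\,d\rho(E)$ uniform over all window pairs $(x,y)$; Fatou yields $\liminf_j\int\int\lVert T(E;t,s)\rVert^2<\infty$ for $\rho$-a.e.\ $E$, and the local comparison of Lemma \ref{lem2.6} (which controls $\lVert T(E;x_j,y_j)\rVert^2$ by its average over unit windows) upgrades this to $\liminf_j\lVert T(E;x_j,y_j)\rVert<\infty$ for $\rho$-a.e., hence $\mu^\alpha_\ac$-a.e., $E$ — directly contradicting \eqref{tnjass} on any set of positive $\mu^\alpha_\ac$-measure. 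If you want to repair your write-up, this integrated two\-/point argument is the mechanism to adopt; no subordinacy input is needed.
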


In the other direction one has:

\begin{theorem}\lb{Lpbiggerthan2criterion}
Assume Hypothesis \ref{pot} and fix $\alpha\in [0,\pi)$. Suppose that for some $p>2$,
\begin{equation}\lb{tpnew}
\liminf_{x\rightarrow\infty}\int_{E_1}^{E_2}\|T(E;x)\|^pdE<\infty. 
\end{equation}
Then, $H^{\alpha}$ has purely absolutely continuous spectrum on $(E_1, E_2)$. 
\end{theorem}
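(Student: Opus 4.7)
The plan is to transfer the $L^p$ hypothesis on $T(E;x)$ to a uniform $L^{p/2}$ bound on the Poisson densities $\pi^{-1}\Im m_\alpha(\cdot+i\varepsilon_n)$ along a sequence $\varepsilon_n\downarrow 0$. Since $p/2>1$, such a bound forces the weak-$*$ limit of these densities, which is $\mu^\alpha$ restricted to $(E_1,E_2)$, to be absolutely continuous with density in $L^{p/2}$ and hence rules out both singular continuous and pure point parts on $(E_1,E_2)$.

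I would first pick a sequence $x_n\to\infty$ realizing the liminf in \eqref{tpnew}, so that $\int_{E_1}^{E_2}\|T(E;x_n)\|^p\,dE\le M<\infty$. Because the coefficient matrix in Hypothesis~\ref{pot} is trace-free, $\det T(E;x)\equiv 1$ and hence $\|T(E;x)\|\ge 1$, so $\|T\|^2\le\|T\|^p$ pointwise; Fatou's lemma combined with Theorem~\ref{thm2.10} already shows $(E_1,E_2)\subseteq\overline{\Sigma_\ac}^{\ess}$ and thus establishes the presence of a.c.\ spectrum. To upgrade this to purity I would invoke a Jitomirskaya--Last-type inequality adapted to the quasi-derivative formalism. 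Letting $u_\phi,u_\theta$ be real solutions of $\ell u=Eu$ whose initial data at $0$ realize the boundary angle $\alpha$ and its orthogonal, the argument yields a universal constant $C$ and, for each $\varepsilon>0$, a length scale $L(E,\varepsilon)$ such that
\[
|m_\alpha(E+i\varepsilon)|\le C\,\frac{\|u_\theta\|_{L^2(0,L(E,\varepsilon))}}{\|u_\phi\|_{L^2(0,L(E,\varepsilon))}}\le C\,\|T(E;L(E,\varepsilon))\|^2.
\]
Calibrating $\varepsilon_n$ to $x_n$ so that $L(E,\varepsilon_n)\le x_n$ uniformly in $E\in(E_1,E_2)$ and integrating then gives
\[
\int_{E_1}^{E_2}|m_\alpha(E+i\varepsilon_n)|^{p/2}\,dE\le C^{p/2}M,
\]
after which the weak-$*$ argument sketched above concludes the proof.

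The main technical obstacle is the second step, namely establishing the Jitomirskaya--Last bound intrinsically in the locally $H^{-1}$ setting. The classical derivation rests on the Wronskian $W[u,v]=uv^{[1]}-u^{[1]}v$ being a first integral of $\ell u=Eu$, on Green's identity for pairs in $\mathfrak D$, and on a variational choice of $L(E,\varepsilon)$ balancing $\|u_\phi\|$ against $\|u_\theta\|$. All three ingredients have natural counterparts once the ordinary derivative $u'$ is systematically replaced by the quasi-derivative $u^{[1]}$, so no fundamental difficulty is expected, but the uniformity of the calibration $L(E,\varepsilon_n)\lesssim x_n$ over $E\in(E_1,E_2)$ demands some care; this can be bypassed by proving instead an integrated bound $|m_\alpha(E+i\varepsilon)|\le C\int_0^{1/\varepsilon}\|T(E;x)\|^2 k_\varepsilon(x)\,dx$ with a tame kernel $k_\varepsilon$, after which the exponent $p>2$ enters only through a final application of H\"older's inequality.
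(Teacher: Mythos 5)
Your overall strategy (uniform $L^{q}$ bounds, $q=p/2>1$, on approximating densities followed by a weak-$*$ limit) is the right shape of argument, but the route you choose to produce those densities --- boundary values of $\Im m_\alpha$ controlled by a Jitomirskaya--Last inequality --- has a genuine gap. The JL bound controls $|m_\alpha(E+i\varepsilon)|$ by the ratio $\|u_\theta\|_{L^2(0,L)}/\|u_\phi\|_{L^2(0,L)}$ at the \emph{$E$-dependent} length scale $L=L(E,\varepsilon)$, and that ratio is bounded (using the paper's eigensolution estimates) by $\sup_{x\le L}\|T(E;x)\|^2$, not by $\|T(E;L)\|^2$: since $x\mapsto\|T(E;x)\|$ is not monotone, the transfer matrix can be large at intermediate scales while returning to norm $1$ at $x=L$, so your second displayed inequality fails as stated. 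More importantly, hypothesis \eqref{tpnew} is a liminf: it provides an $L^p(dE)$ bound only along one sequence of scales $x_n$ and gives no control of $\|T(E;x)\|$ for $x<x_n$, nor at the $E$-dependent scale $L(E,\varepsilon_n)$. Hence the calibration ``$L(E,\varepsilon_n)\le x_n$ uniformly in $E$'' buys nothing, and the fallback integrated bound $\int_0^{1/\varepsilon}\|T(E;x)\|^2k_\varepsilon(x)\,dx$ suffers from the same mismatch, since it requires information at all scales up to $1/\varepsilon_n$. (The side remark that Fatou plus Theorem~\ref{thm2.10} ``already shows'' presence of a.c.\ spectrum has the same flaw: finiteness of $\liminf_n\|T(E;x_n)\|$ along a sparse subsequence does not yield the Ces\`aro bound $\liminf_l \frac1l\int_0^l\|T(E;x)\|^2dx<\infty$ defining $\Sigma_\ac$.)

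The paper avoids all of this with the Carmona-type formula (Theorem~\ref{carmona}): the approximating measures $d\mu^\alpha_{x_n}$ have densities $\bigl(\pi\,\|\vec{\phi}_{\alpha,E}(x_n)\|^2\bigr)^{-1}$ which involve the transfer matrix only at the single, $E$-independent scale $x_n$; since $\det T(E;x_n)=1$ and the initial vector is unit, $\|\vec{\phi}_{\alpha,E}(x_n)\|\ge\|T(E;x_n)\|^{-1}$, so \eqref{tpnew} immediately gives $\sup_n\int_{E_1}^{E_2}$ of the density to the power $p/2$ finite, and Last--Simon's Lemma 3.8 then forces the vague limit $\mu^\alpha$ to be purely absolutely continuous on $(E_1,E_2)$. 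If you wish to salvage an $m$-function route, you would need a bound on $\Im m_\alpha(E+i\varepsilon_n)$ in terms of data at the single scale $x_n$ --- which is precisely what the Carmona approximation encodes.
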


Theorems~\ref{thm2.10}, \ref{cor2.11}, \ref{Lpbiggerthan2criterion} generalize results of Last--Simon~\cite{MR1666767}. The proofs are given in Section~\ref{section2}, which also includes a Carmona-type formula, subordinacy, and a Simon--Stolz criterion for absence of eigenvalues.

An important ingredient are new pointwise eigenfunction estimates which are stated and derived in Section 2. These relate the pointwise behavior of a formal eigenfunction and its derivative to its local $L^2$ behavior. For $V\in L^2_{\loc}$ they follow from Sobolev embedding theorems, but for $V\notin L^2_{\loc}$, the local domain becomes $V$-dependent and different arguments are needed; estimates of this form were previously considered for locally $L^1$ potentials \cite{Stolz92,LukicDerivatives}. The pointwise estimates are given in Lemma~\ref{lem2.6}; here we point out one corollary of these estimates:
\begin{theorem}\label{theoremweightedL2eigenfunction}
 Assume Hypothesis \ref{pot} and let $w: (0,\infty) \to (0,\infty)$ obey 
 \begin{equation}\label{goodweight}
\sup_{\{x, y: |x-y| \le 1\}} \frac{w(y)}{w(x)} < \infty.
\end{equation}
For any $E\in \bbR$ there exists a positive constant $C$ such that for any $l>1$ and any solution $u\in \mathfrak D$, $\ell u=Eu$, one has
\begin{equation}\lb{derest1}
\int_1^l w(x) \lVert \vec u(x) \rVert^2 \dd x \le C \int_0^{l+1} w(x) \lvert u(x) \rvert^2 \dd x.
\end{equation}
In particular, if
\[
\int_0^\infty w(x) \lvert u(x)\rvert^2 \,dx < \infty,
\]
then
\[
\int_0^\infty w(x) \lvert u^{[1]}(x)\rvert^2 \,dx < \infty
\]
and
\begin{equation}\label{pointwisedecay}
\lim_{x\to\infty} \sqrt{w(x)} \lvert u(x) \rvert  = \lim_{x\to\infty} \sqrt{w(x)} \lvert u^{[1]}(x) \rvert = 0.
\end{equation}
\end{theorem}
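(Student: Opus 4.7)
The plan is to derive \eqref{derest1} from the pointwise eigenfunction bound in Lemma~\ref{lem2.6} via a Fubini rearrangement, using the local comparability of $w$ to absorb $w(x)$ into $w(t)$ under the integral. Specifically, Lemma~\ref{lem2.6} supplies a constant $C_E > 0$ (depending on $E$ and on $\sigma, \tau$) such that every solution $u \in \mathfrak D$ of $\ell u = E u$ satisfies
\[
\| \vec u(x) \|^2 \le C_E \int_{x-1}^{x+1} |u(t)|^2 \,dt, \qquad x \ge 1.
\]
Multiplying by $w(x)$, integrating over $[1,l]$, and exchanging the order of integration gives
\[
\int_1^l w(x) \| \vec u(x) \|^2 \,dx \le C_E \int_0^{l+1} |u(t)|^2 \biggl( \int_{I(t)} w(x)\,dx \biggr)\,dt,
\]
where $I(t) \subseteq [t-1, t+1]$ is an interval of length at most $2$. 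Setting $M := \sup_{|x-y|\le 1} w(y)/w(x) < \infty$ from \eqref{goodweight}, the inner integral is bounded by $2 M w(t)$, which yields \eqref{derest1} with $C := 2 M C_E$.

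The consequences follow by standard arguments. Assuming $\int_0^\infty w |u|^2\,dx < \infty$ and letting $l \to \infty$ in \eqref{derest1} via monotone convergence produces $\int_1^\infty w |u^{[1]}|^2\,dx < \infty$. The integral over $[0,1]$ is finite because $u^{[1]}$ is continuous there and $w$ is locally bounded (the comparability \eqref{goodweight} forces that), so $\int_0^\infty w |u^{[1]}|^2\,dx < \infty$. For the pointwise decay \eqref{pointwisedecay}, I reuse the Lemma~\ref{lem2.6} estimate and then substitute $w(x) \le M w(t)$ on the range of integration:
\[
w(x) \| \vec u(x) \|^2 \le C_E w(x) \int_{x-1}^{x+1} |u(t)|^2 \,dt \le M C_E \int_{x-1}^{x+1} w(t) |u(t)|^2 \,dt,
\]
which is the tail of a convergent integral and therefore vanishes as $x \to \infty$; the same bound simultaneously controls $w(x) |u(x)|^2$ and $w(x) |u^{[1]}(x)|^2$.

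The real difficulty is not in the present theorem but in Lemma~\ref{lem2.6}: when $\sigma \notin L^2_{\loc}$, the ordinary derivative $u' = u^{[1]} + \sigma u$ need not lie in $L^2_{\loc}$, so classical Sobolev embedding $H^1 \hookrightarrow L^\infty$ cannot be used to deduce pointwise bounds on $\vec u$ from local $L^2$ data on $u$ alone. The first-order system governed by the transfer matrices $T(z,x)$ and the quasi-derivative reformulation is what makes a $\| \vec u \|^2 \lesssim \int |u|^2$ local estimate tractable at this level of generality. Once that pointwise estimate is in hand, the weighted theorem is simply a soft combination of Fubini and \eqref{goodweight}, as sketched above.
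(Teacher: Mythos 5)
Your proposal is correct and follows essentially the same route as the paper: both deduce \eqref{derest1} by combining the local pointwise bounds of Lemma~\ref{lem2.6} with the weight condition \eqref{goodweight} and a Tonelli/Fubini exchange, and both obtain \eqref{pointwisedecay} from the decaying tails of the resulting convergent integral. The only cosmetic differences are that the paper first reduces to real-valued solutions (since Lemma~\ref{lem2.6} is stated for real $u$) and absorbs the weight into the inner integral before integrating in $x$ rather than after, which changes nothing.
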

In this paper, we will only use the case $w=1$; however, polynomial weights $w(x) = (x+1)^c$ and exponential weights $w(x) = e^{c x}$ for $c \in \bbR$ are also relevant for various criteria about the spectrum, spectral type, and dynamical properties which we expect to have a generalization to the current setting.

Remark~\ref{remarklocalsize} indicates that decay at $\infty$ should be quantified by the local $L^2$-norm on $\sigma$ and local $L^1$-norm on $\tau$. Thus, the following result generalizes the Blumenthal--Weyl criterion for preservation of essential spectrum under decaying perturbations:

\begin{lemma}\lb{lem2.4}
Assume Hypothesis \ref{pot} and suppose that 
\begin{align}\lb{unifzer}
\lim_{x\rightarrow\infty} \int_x^{x+1} \left( \sigma^2(t)+|\tau(t)| \right) \ dt =0.
\end{align}
Then, for arbitrary $\alpha\in[0,\pi)$, $\spec_{\ess}(H^{\alpha})=[0,\infty)$. 
\end{lemma}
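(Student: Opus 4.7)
The plan is to prove the two inclusions in $\spec_{\ess}(H^\alpha)=[0,\infty)$ separately, using \eqref{unifzer} in each direction to propagate smallness of the potential at infinity.

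For $[0,\infty)\subseteq\spec_{\ess}(H^\alpha)$, I would construct explicit Weyl sequences on disjoint far-out intervals. Given $E=k^2\ge 0$, choose disjoint intervals $[a_n,b_n]\subset\bbR_+$ with $a_n\to\infty$, $b_n-a_n\to\infty$, and
\[
(b_n-a_n)\cdot\sup_{x\in[a_n,b_n]}\int_x^{x+1}(\sigma^2+|\tau|)\,dt\longrightarrow 0,
\]
which is possible by \eqref{unifzer}. Let $u_n\in\mathfrak D$ solve $\ell u_n=E u_n$ on $[a_n,b_n]$ with normalized Cauchy data at $a_n$, and set $\phi_n=\chi_n u_n$ for a smooth cutoff $\chi_n$ equal to $1$ on $[a_n+1,b_n-1]$ and supported in $[a_n,b_n]$. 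Then $\phi_n\in\dom(H^\alpha)$, since $\phi_n^{\1}=\chi_n' u_n+\chi_n u_n^{\1}$ is AC thanks to $u_n\in\mathfrak D$, and a direct quasi-derivative product rule computation yields
\begin{equation}
(\ell-E)\phi_n=-\chi_n'' u_n-2\chi_n' u_n',
\end{equation}
supported on cutoff transition regions of length $O(1)$. A Pr\"ufer-type estimate applied to $u_n$ (with the displayed smallness condition controlling the increment of the Pr\"ufer radius across $[a_n,b_n]$) keeps $u_n,u_n^{\1}$ uniformly bounded there; combined with $u_n'=u_n^{\1}+\sigma u_n$ and the local $L^2$ bound on $\sigma$ this also gives $\int_{\mathrm{cutoff}}|u_n'|^2=O(1)$. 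Therefore $\|(\ell-E)\phi_n\|_{L^2}^2=O(1)$ while $\|\phi_n\|_{L^2}^2\gtrsim b_n-a_n\to\infty$, forcing $\|(\ell-E)\phi_n\|_{L^2}/\|\phi_n\|_{L^2}\to 0$; disjointness of the $[a_n,b_n]$ gives orthogonality, so Weyl's criterion yields $E\in\spec_{\ess}(H^\alpha)$ (closedness of $\spec_{\ess}$ supplies $E=0$).

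For the reverse inclusion $\spec_{\ess}(H^\alpha)\subseteq[0,\infty)$, I would use Persson's formula
\begin{equation}
\inf\spec_{\ess}(H^\alpha)=\lim_{R\to\infty}\inf\bigl\{\mathfrak q[\phi]/\|\phi\|_{L^2}^2: \phi\in\dom(\mathfrak q),\ \supp\phi\subseteq[R,\infty),\ \phi\ne 0\bigr\},
\end{equation}
where $\mathfrak q[\phi]=\int_0^\infty|\phi^{\1}|^2\,dx+\int_0^\infty(\tau-\sigma^2)|\phi|^2\,dx$ is the quadratic form of $H^\alpha$. For $\phi$ in the form domain with $\supp\phi\subseteq[R,\infty)$, partition $[R,\infty)$ into unit intervals $I_n$. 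The identity $\phi'=\phi^{\1}+\sigma\phi$ with Cauchy--Schwarz gives the local Sobolev-type bound $\sup_{I_n}|\phi|^2\le C_0\bigl(\int_{I_n}|\phi^{\1}|^2+\int_{I_n}|\phi|^2\bigr)$, with $C_0$ uniform in $n$ by \eqref{sigmatauboundedness}. Fixing $\varepsilon>0$ and choosing $R$ so that $\sup_{x\ge R}\int_x^{x+1}(\sigma^2+|\tau|)\,dt<\varepsilon$ (via \eqref{unifzer}), summation over $n$ yields
\begin{equation}
\Bigl|\int_R^\infty(\tau-\sigma^2)|\phi|^2\,dx\Bigr|\le C\varepsilon\Bigl(\int_R^\infty|\phi^{\1}|^2\,dx+\|\phi\|_{L^2}^2\Bigr),
\end{equation}
so $\mathfrak q[\phi]\ge(1-C\varepsilon)\int_R^\infty|\phi^{\1}|^2\,dx-C\varepsilon\|\phi\|_{L^2}^2\ge -C\varepsilon\|\phi\|_{L^2}^2$. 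Plugging into Persson's formula and sending $\varepsilon\to 0$ gives $\inf\spec_{\ess}(H^\alpha)\ge 0$.

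The main obstacle is the Weyl sequence construction in the distributional setting: one must obtain uniform Pr\"ufer control on $u_n,u_n^{\1}$ over the growing intervals $[a_n,b_n]$ from only the local $L^2\times L^1$ smallness furnished by \eqref{unifzer}, and verify that the cutoff product $\chi_n u_n$ remains in $\dom(H^\alpha)$ (rather than just in $L^2$). A secondary subtlety in the reverse inclusion is extending the local Sobolev-type bound from $\mathfrak D$ to all elements of the form domain of $H^\alpha$; this rests on Lemma~\ref{lem2.6} and the regularization machinery developed in Section~\ref{section2}.
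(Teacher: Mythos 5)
Your route is genuinely different from the paper's. The paper does not build Weyl sequences or invoke Persson's formula at all: it reduces to $\alpha=0$ via Krein's resolvent formula (Remark~\ref{thm2.3}) and then shows that the perturbation form $\mathfrak s[u,u]=-2\Re\langle\sigma u',u\rangle+\langle\tau u,u\rangle$ is a \emph{relatively form-compact} perturbation of the Dirichlet form, using the bounds \eqref{formbounds1} on the tail $[t,\infty)$ (where \eqref{unifzer} gives smallness) and compactness of $H^1((0,t))\hookrightarrow L^2((0,t))$ on the head; stability of the essential spectrum then gives both inclusions at once. Your two-sided argument buys a more hands-on proof and makes the $\alpha$-independence automatic (all test functions live away from $0$), while the paper's argument avoids any ODE estimates on growing intervals and any Persson-type machinery, which is a real advantage in this distributional setting.

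Two points in your write-up need repair. First, the displayed smallness condition $(b_n-a_n)\sup_{x\in[a_n,b_n]}\int_x^{x+1}(\sigma^2+|\tau|)\,dt\to 0$ does \emph{not} yield uniform bounds on the Pr\"ufer radius: by \eqref{pruf1}, $(\log R)'=\frac{\tau-\sigma^2}{2k}\sin 2\theta-\sigma\cos 2\theta$, and while the first term is controlled by $L_n\epsilon_n$ (with $L_n=b_n-a_n$, $\epsilon_n$ the sup above), the $\sigma\cos 2\theta$ term is only controlled by $\int_{a_n}^{b_n}|\sigma|\lesssim L_n\sqrt{\epsilon_n}$, and $L_n\epsilon_n\to0$ does not force $L_n\sqrt{\epsilon_n}$ to stay bounded (take $\epsilon_n\sim L_n^{-3/2}$). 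Since you choose the intervals, this is fixable by imposing the stronger requirement $L_n^2\,\epsilon_n\to 0$ (possible by \eqref{unifzer} with $L_n\to\infty$ slowly), but as stated the key uniform bound on $u_n,u_n^{[1]}$ does not follow. Second, Persson's formula is not off-the-shelf here: standard statements assume locally integrable (or Kato-class) potentials, whereas $V=\sigma'+\tau$ is only locally $H^{-1}$, so you must prove the localization version for $\mathfrak h^{\alpha}$ yourself. The IMS identity does survive (for a quadratic partition of unity $j_1^2+j_2^2=1$ the $\sigma$ cross terms cancel because $\sum_i j_ij_i'=0$), and the remaining estimates you sketch via $\phi'=\phi^{[1]}+\sigma\phi$ and \eqref{sob1} are fine on the form domain $H^1$ (no appeal to Lemma~\ref{lem2.6} is needed), but the step identifying the contribution of the compactly supported piece with discrete spectrum requires exactly the kind of local form-compactness the paper establishes, so it should be argued, not cited.
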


We note in particular that Lemma~\ref{lem2.4} gives a more robust criterion even for locally $L^1$-potentials. Any locally uniformly $L^1$ potential $V$ can be decomposed as $\sigma = 0$, $\tau = V$, but choosing a different decomposition can give better results. For instance Lemma~\ref{lem2.4} implies:

\begin{corollary} \label{corollaryL1locessspec}
If $V \in L^1_{\loc}([0,\infty))$ is real-valued and the limit 
\[
\lim_{x\to\infty} \int_0^x V(t) \,dt
\]
is convergent, then the operator $-\frac{d^2}{dx^2}+V$ is limit point at $\infty$ and its arbitrary self-adjoint realization $H_V$ in $L^2(\bbR_+)$ satisfies $\spec_{\ess}(H_V) = [0,\infty)$. 
\end{corollary}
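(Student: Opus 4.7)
The plan is to reduce to Lemma \ref{lem2.4} by exploiting the freedom to choose the decomposition $V = \sigma' + \tau$: putting all of $V$ into the derivative part produces an antiderivative that automatically decays. Set $L := \lim_{x\to\infty}\int_0^x V(t)\,dt$ and let
\[
\sigma(x) := \int_0^x V(t)\,dt - L, \qquad \tau(x) := 0.
\]
Since $V \in L^1_{\loc}([0,\infty))$, the function $\sigma$ is absolutely continuous with $\sigma' = V$ a.e., and by the definition of $L$ it is bounded with $\lim_{x\to\infty}\sigma(x) = 0$. Hence $\sup_x \int_x^{x+1}\sigma(t)^2\,dt < \infty$, verifying \eqref{sigmatauboundedness} and Hypothesis \ref{pot}, while
\[
\lim_{x\to\infty}\int_x^{x+1}\bigl(\sigma(t)^2 + |\tau(t)|\bigr)\,dt = 0,
\]
verifying the uniform decay condition \eqref{unifzer} of Lemma \ref{lem2.4}.

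The next step is to identify the self-adjoint operators $H^\alpha$ produced by Hypothesis \ref{pot} from this $(\sigma,\tau)$ with the classical self-adjoint realizations of $-d^2/dx^2 + V$. Since $V \in L^1_{\loc}$ and $\sigma \in \AC(\bbR_+)$, an elementary product-rule calculation shows that $u \in \AC(\bbR_+)$ satisfies $u^{[1]} = u' - \sigma u \in \AC(\bbR_+)$ if and only if $u' \in \AC(\bbR_+)$, so the maximal domain $\mathfrak D$ coincides with the classical one; moreover, a routine expansion of $\ell u$ collapses to $-u'' + V u$. The quasi-derivative boundary condition $u(0)\cos\alpha + u^{[1]}(0)\sin\alpha = 0$ rewrites as $u(0)(\cos\alpha - L\sin\alpha) + u'(0)\sin\alpha = 0$, which as $\alpha$ ranges over $[0,\pi)$ parametrizes exactly the same family of self-adjoint boundary conditions at $0$ as the classical labeling. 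The limit-point property at $\infty$ then follows from the general framework of Hypothesis \ref{pot} applied to this decomposition.

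To conclude, Lemma \ref{lem2.4} gives $\spec_{\ess}(H^\alpha) = [0,\infty)$ for every $\alpha \in [0,\pi)$; since any two self-adjoint realizations of the same Sturm--Liouville expression differ by a rank-one perturbation at the regular endpoint, Weyl's theorem transfers this equality to any self-adjoint realization $H_V$.

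The main obstacle is the identification step: one must carefully check that changing the decomposition of the same distribution $V$ does not change the underlying operator, only the parametrization of boundary conditions. This is morally obvious (both decompositions encode $-d^2/dx^2 + V$), but verifying the equality of maximal domains and the algebraic simplification of $\ell u$ requires consistent tracking of quasi-derivatives, which is where essentially all of the bookkeeping lies.
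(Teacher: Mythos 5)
Your proposal is correct and takes essentially the same route as the paper: the same decomposition $\sigma(x)=\int_0^x V(t)\,dt - L$, $\tau=0$, verification of \eqref{sigmatauboundedness} and \eqref{unifzer}, and reduction to Theorem~\ref{thmoper} and Lemma~\ref{lem2.4}, with the identification/relabeling of boundary conditions you spell out being precisely the gauge-change observation of Remark~\ref{remarkgaugechange}. One harmless slip: since $\sigma(0)=-L$, the rewritten condition is $u(0)(\cos\alpha + L\sin\alpha)+u'(0)\sin\alpha=0$ rather than with $-L\sin\alpha$, which does not affect the argument.
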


This corollary applies to oscillatory potentials such as
\begin{equation}\lb{bosc}
V(x):=(-1)^{\lfloor 2n(x-n)\rfloor}, x\in [n-1, n), n\in\bbN
\end{equation}
which was considered in \cite{EichingerLukic} by a more specialized argument, and to potentials
\begin{equation}\lb{unbosc}
V(x):=x^{\alpha}\sin(x^{\beta}),\  \alpha \ge 0, \beta>\alpha+1
\end{equation}
which aren't even locally uniformly integrable if $\alpha > 0$. Similar growing oscillatory potentials were considered in \cite{MR682723, DamanikKillip}. 

The description of the essential spectrum is the starting point in the theory of Schr\"odinger operators with decaying potentials, which are a classical subject and have been extensively studied over the past 30 years  \cite{MR397194, MR1101267, MR797277, MR2307748, MR1697600, MR585593, MR2552106,  MR1628290, MR931664, MR484145, MR1463044, vonNeumann1993, MR213915, MR682723}. Their spectral properties show a subtle competition between the rate of decay (with faster decay leading to absolutely continuous spectrum) and the disorder and oscillation in the potential (which promote more singular spectrum). Spectral transitions dependent on the rate of the decay have been studied by many authors, in particular: Pearson \cite{MR484145} in deterministic setting; Kiselev, Last, Simon \cite{MR1628290}, central to this paper; Delyon, Simon, Souillard \cite{MR797277} for discrete Schr\"odinger operators and Kronig--Penney models with decaying random potentials; and Kotani, Ushiroya \cite{MR931664} for continuous Schr\"odinger operators with decaying random potentials. This collection of papers gave rise to a number of subsequent investigations many of which are referenced in the review paper by Denisov, Kiselev \cite{MR2307748}. 

We first prove that short-range perturbations preserve pure a.c.\ spectrum.  In situations where different exponents are used to control local integrability and decay, the spaces of functions
\[
\ell^p(L^q) = \left \{ f : \bbR_+ \to \bbC \mid \sum_{n=0}^\infty \lVert f \chi_{[n,n+1)} \rVert_q^p < \infty \right\}
\]
are useful, cf. \cite{ChristKiselev,Rybkin04,Rybkin06}. The classical result about short-range perturbations is that $V\in L^1(\bbR_+)$ implies purely a.c.\ spectrum on $(0,\infty)$. The distributional analog of this criterion, informally speaking, is $\ell^1(H^{-1})$; following Remark~\ref{remarklocalsize}, we find the correct formulation.

\begin{theorem}\label{theorem.shortrange0}
Assume Hypothesis \ref{pot}. If $\sigma \in \ell^1(L^2)$ and $\tau \in \ell^1(L^1) = L^1(\bbR_+)$, then $H^\alpha$ has purely a.c.\ spectrum on $(0,\infty)$ for every $\alpha \in [0,\pi)$. 
\end{theorem}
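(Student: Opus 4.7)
The plan is to reduce to a uniform transfer-matrix bound and then invoke Theorem~\ref{Lpbiggerthan2criterion}. Concretely, I will show that for every compact interval $K=[E_1,E_2]\subset(0,\infty)$ there exists $C(K)>0$ with $\|T(E;x)\|\le C(K)$ for all $x\in\bbR_+$ and all $E\in K$. Granted such a bound, $\int_{E_1}^{E_2}\|T(E;x)\|^p\,dE\le (E_2-E_1)C(K)^p$ is uniformly bounded in $x$ for any $p>2$, so Theorem~\ref{Lpbiggerthan2criterion} yields purely absolutely continuous spectrum of $H^\alpha$ on $(E_1,E_2)$; exhausting $(0,\infty)$ by such intervals finishes the proof.

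To obtain the uniform bound, I use a modified Pr\"ufer transform. Fix $E=k^2$ with $k>0$ and let $u\in\mathfrak D$ be a nontrivial solution of $\ell u=Eu$. Since $u,u^{[1]}\in\AC(\bbR_+)$ and, by uniqueness for the first-order system, the vector $(u(x),u^{[1]}(x))$ is nowhere zero, we may write
\[
u(x)=\frac{r(x)\sin\theta(x)}{\sqrt k},\qquad u^{[1]}(x)=\sqrt k\,r(x)\cos\theta(x),
\]
with $r,\theta$ absolutely continuous and $r>0$. Combining $u'=u^{[1]}+\sigma u$ with $(u^{[1]})'=-\sigma u^{[1]}+(\tau-\sigma^2-E)u$, a direct computation gives the amplitude equation
\[
(\log r)'(x)=-\sigma(x)\cos(2\theta(x))+\frac{\tau(x)-\sigma(x)^2}{2k}\sin(2\theta(x)).
\]

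The hypotheses yield $\sigma\in L^1(\bbR_+)\cap L^2(\bbR_+)$: indeed, $\ell^1(L^2)\subset L^2$ because summability of $\{\|\sigma\chi_{[n,n+1)}\|_2\}$ forces summability of its squares, and $\ell^1(L^2)\subset L^1$ by Cauchy--Schwarz on each unit interval. Consequently $|\sigma|,\sigma^2,|\tau|\in L^1(\bbR_+)$ and
\[
|(\log r)'(x)|\le|\sigma(x)|+\frac{|\tau(x)|+\sigma(x)^2}{2k},
\]
whose $L^1$-norm is at most $\|\sigma\|_1+(2k)^{-1}(\|\tau\|_1+\|\sigma\|_2^2)$ and is uniformly controlled for $k\in[\sqrt{E_1},\sqrt{E_2}]$. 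Therefore $\log r$ is bounded on $\bbR_+$, so $r$ is bounded above and away from $0$. Applying this to the two column solutions of $T(E;x)$, whose initial amplitudes $r(0)\in\{\sqrt k,1/\sqrt k\}$ are uniformly controlled on $K$, and using the identity $r^2=ku^2+(u^{[1]})^2/k$ to recover pointwise bounds on $|u|$ and $|u^{[1]}|$, one obtains the uniform estimate $\|T(E;x)\|\le C(K)$.

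I do not anticipate a serious obstacle: the argument is the classical $L^1$-perturbation Pr\"ufer estimate, adapted to the quasi-derivative formalism of Hypothesis~\ref{pot}. In particular, the oscillatory term $\sigma\cos(2\theta)$ requires no cancellation or integration by parts, because $\sigma\in L^1(\bbR_+)$ already; the role of the $\ell^1(L^2)$ hypothesis is precisely to guarantee $\sigma\in L^1(\bbR_+)$ together with $\sigma^2\in L^1(\bbR_+)$. The only mild subtlety is justifying absolute continuity of the Pr\"ufer variables in the distributional setting, which follows from $u,u^{[1]}\in\AC(\bbR_+)$ and the nonvanishing of $(u,u^{[1]})$.
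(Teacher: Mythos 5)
Your proof is correct, but it reaches the conclusion by a different mechanism than the paper. The paper deduces Theorem~\ref{theorem.shortrange0} from the stronger Theorem~\ref{theorem.shortrange}: after the same preliminary observation that $\sigma\in\ell^1(L^2)$ forces $\sigma\in L^1(\bbR_+)\cap L^2(\bbR_+)$, it uses the Pr\"ufer amplitude equation \eqref{ODE2} to show that $\log R(x,E)$ actually \emph{converges}, uniformly on compact subsets of $(0,\infty)$, as $x\to\infty$, and then identifies the limiting spectral measure via the Carmona-type formula of Theorem~\ref{carmona2}, obtaining the explicit density $w_\alpha(E)=\sqrt{E}\big/\big(\pi\lim_{x\to\infty}R(x,E)^2\big)$, continuous and strictly positive on $(0,\infty)$. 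You stop at uniform \emph{boundedness} of the Pr\"ufer radius, equivalently of $\|T(E;x)\|$ (cf.\ Proposition~\ref{prop2.4new}), and then invoke the $p>2$ criterion of Theorem~\ref{Lpbiggerthan2criterion} on compact energy intervals and exhaust $(0,\infty)$. Both routes rest on the identical core estimate, namely that $|\sigma|+\tfrac{1}{2k}\big(|\tau|+\sigma^2\big)$ is integrable uniformly for $k$ in compact subsets of $(0,\infty)$; the difference is only in the concluding step. Your version is slightly more economical for the statement at hand, needing only boundedness rather than convergence of $\log R$, while the paper's route buys the sharper conclusion of Theorem~\ref{theorem.shortrange} (an explicit, continuous, strictly positive a.c.\ density, hence precise control of the a.c.\ part on $(0,\infty)$), which the bounded-transfer-matrix argument does not directly provide; note also that Theorem~\ref{Lpbiggerthan2criterion} is itself proved through the Carmona formula, so the two arguments are ultimately close relatives. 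Your auxiliary steps — the inclusions $\ell^1(L^2)\subset L^1\cap L^2$, nonvanishing of $(u,u^{[1]})$, absolute continuity of the Pr\"ufer variables, and the uniform control of the initial amplitudes $r(0)\in\{\sqrt{k},1/\sqrt{k}\}$ of the two columns of $T(E;x)$ over $k\in[\sqrt{E_1},\sqrt{E_2}]$ — are all sound.
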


In fact, we prove a more general result than Theorem~\ref{theorem.shortrange0}:

\begin{theorem}\label{theorem.shortrange}
	Assume Hypothesis \ref{pot} and
	\begin{align}\lb{firstac}
		\sigma\in L^1(\bbR_+), \quad (\sigma^2-\tau)\in L^1(\bbR_+).
	\end{align}
	Then for arbitrary $\alpha\in[0, \pi)$, the spectral measure on $(0,\infty)$ is of the form 
	\begin{equation}\lb{measconv}
	\chi_{(0,\infty)}(E)d\mu^\alpha(E) = w_\alpha(E) \,dE	
\end{equation}
		with $w_\alpha$ continuous on $(0,\infty)$ and strictly positive there. In particular, the spectrum of $H^{\alpha}$ is purely absolutely continuous on $(0,\infty)$.
\end{theorem}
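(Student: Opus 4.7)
The plan is to construct Jost solutions $f_+(z,x)$ in a one-sided complex neighborhood of $(0,\infty)$, use them to express the boundary value of the Weyl $m$-function explicitly, and thereby read off continuity and strict positivity of the spectral density.

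First, I decompose $T(z,x)=T_0(z,x)S(z,x)$ where $T_0$ is the free transfer matrix ($\sigma=\tau=0$), so that $S$ satisfies
\[
S'(z,x)=T_0(z,x)^{-1}\begin{pmatrix}-\sigma(x) & \tau(x)-\sigma(x)^2\\ 0 & \sigma(x)\end{pmatrix}T_0(z,x)\, S(z,x).
\]
For $E\in(0,\infty)$, $T_0(E,x)$ and $T_0(E,x)^{-1}$ are uniformly bounded in $x$, so the coefficient matrix is dominated in norm by $C(E)(|\sigma(x)|+|\tau(x)-\sigma(x)^2|)\in L^1(\bbR_+)$ by \eqref{firstac}. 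A Volterra iteration then yields an invertible limit $S_\infty(E)=\lim_{x\to\infty}S(E,x)$, uniform on compacts in $(0,\infty)$, and consequently a Jost solution $f_+(E,x)$ with $f_+(E,x)e^{-i\sqrt{E}\,x}\to 1$ and $f_+^{[1]}(E,x)e^{-i\sqrt{E}\,x}\to i\sqrt{E}$ as $x\to\infty$, whose initial values $f_+(E,0)$ and $f_+^{[1]}(E,0)$ depend continuously on $E$. Since $\|T(E,x)\|$ is therefore bounded in $x$ uniformly on compacts in $E\in(0,\infty)$, Theorem~\ref{Lpbiggerthan2criterion} already gives that $H^\alpha$ has purely absolutely continuous spectrum on $(0,\infty)$.

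Next, I extend this Jost construction to $\{z:\Im z\ge 0,\,\Re z>0\}$ by a weighted Volterra iteration in the diagonalizing basis of $A_0(z)=\begin{pmatrix}0 & -z\\ 1 & 0\end{pmatrix}$, compensating for the exponentially growing direction of $T_0(z,x)$ when $\Im z>0$; this makes $f_+(z,0)$ and $f_+^{[1]}(z,0)$ continuous in $z$ up to the real axis. Letting $\phi_\alpha,\theta_\alpha$ denote fundamental solutions normalized by $\phi_\alpha(0)=-\sin\alpha$, $\phi_\alpha^{[1]}(0)=\cos\alpha$, $\theta_\alpha(0)=\cos\alpha$, $\theta_\alpha^{[1]}(0)=\sin\alpha$ (so $W(\theta_\alpha,\phi_\alpha)=1$), for $\Im z>0$ the function $f_+(z,\cdot)$ is the unique up-to-scalar $L^2$ solution at infinity and is thus proportional to the Weyl solution $\psi_z=\theta_\alpha+m_\alpha(z)\phi_\alpha$; a direct computation yields
\[
m_\alpha(z)=\frac{-\sin\alpha\, f_+(z,0)+\cos\alpha\, f_+^{[1]}(z,0)}{\cos\alpha\, f_+(z,0)+\sin\alpha\, f_+^{[1]}(z,0)}.
\]
Continuity of the numerator and denominator up to the boundary thus transfers to $m_\alpha(E+i0)$.

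Finally, for $E>0$, $\overline{f_+(E,\cdot)}$ is a Jost solution with asymptotic $e^{-i\sqrt{E}\,x}$, and the conserved Wronskian $W(\overline{f_+},f_+)=2i\sqrt{E}\neq 0$ forces $f_+$ and $\overline{f_+}$ to be linearly independent. Hence the denominator $D(E):=\cos\alpha\, f_+(E,0)+\sin\alpha\, f_+^{[1]}(E,0)=W(f_+(E,\cdot),\phi_\alpha(E,\cdot))(0)$ is nonzero (otherwise both $f_+$ and $\overline{f_+}$ would satisfy the $\alpha$-boundary condition, contradicting independence). Using $W(\overline{f_+},f_+)(0)=2i\Im(\overline{f_+(E,0)}\,f_+^{[1]}(E,0))=2i\sqrt{E}$ one obtains
\[
w_\alpha(E)=\frac{1}{\pi}\Im m_\alpha(E+i0)=\frac{\sqrt{E}}{\pi|D(E)|^2},
\]
continuous and strictly positive on $(0,\infty)$, which establishes \eqref{measconv}. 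The main obstacle is the construction of $f_+(z,x)$ uniformly in $z$ up to the real axis: for $\Im z>0$, $T_0(z,x)$ has one exponentially growing and one exponentially decaying direction, so a carefully weighted Volterra iteration in the diagonal basis of $A_0(z)$ is required to ensure uniform convergence of $S(z,x)$ as $x\to\infty$ with constants that do not blow up as $\Im z\to 0^+$.
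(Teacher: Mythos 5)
Your argument is correct in outline, but it takes a genuinely different route from the paper. The paper's proof is essentially three lines: by \eqref{ODE2}, $(\log R)'=\frac{\tau-\sigma^2}{2k}\sin(2\theta)-\sigma\cos(2\theta)$ is dominated by $\frac{|\tau-\sigma^2|}{2k}+|\sigma|\in L^1(\bbR_+)$ under \eqref{firstac}, so $\log R(x,E)$ converges uniformly on compact subsets of $(0,\infty)$; feeding this into the modified Carmona formula of Theorem~\ref{carmona2} (whose density is $\sqrt E/\bigl(\pi(E\phi_{\alpha,E}^2+(\phi_{\alpha,E}^{[1]})^2)\bigr)$, i.e.\ $\sqrt E/(\pi E R^2)$ in Pr\"ufer variables) lets one pass the limit under the integral and read off a continuous, strictly positive $w_\alpha$ directly, never leaving real energies. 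You instead run the classical scattering route: a Volterra argument giving $S(E,x)\to S_\infty(E)$ (your first paragraph is fine, and already yields pure a.c.\ spectrum via Theorem~\ref{Lpbiggerthan2criterion}, though not yet the density statement), then Jost solutions for $\Im z\ge 0$, the formula $m_\alpha$ in terms of $f_+(z,0),f_+^{[1]}(z,0)$ consistent with \eqref{mpsi}, the Wronskian identity $W(\overline{f_+},f_+)=2i\sqrt E$ forcing $D(E)=W(f_+,\phi_{\alpha,E})(0)\neq 0$, and standard Herglotz boundary-value theory to get $w_\alpha(E)=\sqrt E/(\pi|D(E)|^2)$. This buys more than the paper proves (continuity of $m_\alpha$ up to the axis and an explicit scattering-theoretic formula for the density), at the price of the one step you only sketch: the construction of $f_+(z,\cdot)$ for $\Im z\ge 0$ with bounds uniform down to the real axis in the quasi-derivative setting $V=\sigma'+\tau$. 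That step is standard for $L^1$ potentials and should go through here because the perturbation entering the first-order system is exactly $|\sigma|+|\tau-\sigma^2|\in L^1$, but it is the technical heart of your proof and would need to be written out; note also that it is avoidable, since your real-energy convergence of $S(E,x)$ combined with the paper's Theorem~\ref{carmona2} already produces the continuous positive density without any excursion into complex energies.
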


To see that Theorem~\ref{theorem.shortrange} implies Theorem~\ref{theorem.shortrange0}, note that $\ell^1(L^2) \subset \ell^2(L^2) = L^2(\bbR_+)$ and $\ell^1(L^2) \subset \ell^1(L^1) = L^1(\bbR_+)$; thus, $\sigma \in \ell^1(L^2)$ and $\tau \in L^1(\bbR_+)$ implies \eqref{firstac}. These results apply, for instance, to potentials \eqref{unbosc} with $\beta > \alpha + 2$.

We note that neither condition in these theorems can be relaxed. For $\sigma = 0$ it is well-known that decay of $\tau$ weaker than $L^1$ can introduce singular spectrum in $(0,\infty)$; e.g., Wigner–von Neumann type potentials \cite{vonNeumann1993,MR247300,MR875178, MR1346989,Lukic2013} can exhibit eigenvalues embedded into ac spectrum with $\tau(x) = \cO(1/x)$ as $x \to \infty$. Similarly, we note that:

\begin{example}\lb{ex1.10}
There exists $\sigma \in \AC([0,\infty))$ with $\sigma(x) = \cO(1/x)$ as $x \to \infty$ such that for $\tau = 0$ and some $\alpha \in [0,\pi)$, the spectrum of $H^{\alpha}$ is not purely absolutely continuous on $(0,\infty)$.
\end{example}

Since such an example obeys $\sigma \in L^2(\bbR_+)$, it shows that the condition $\sigma \in \ell^1(L^2)$ cannot be relaxed in Theorem~\ref{theorem.shortrange0} and that the condition $\sigma^2 - \tau \in L^1(\bbR_+)$ cannot be relaxed in Theorem~\ref{theorem.shortrange}.

In the second part of the paper, we specialize to decaying sparse potentials and prove the following theorem.

	\begin{theorem}\lb{sparseAltnewtheorem} 
		Let $W_n \in H^{-1}(\bbR)$ be real distributions with $\supp W_n \subset [-\Delta,\Delta]$.  Assume that $W_n \to W$ in $H^{-1}(\bbR)$, with $W \neq 0$.  Let $d_n \to 0$,  let $\{x_n\}_{n=1}^{\infty}\subset \bbR_+$ be a monotonically increasing sequence such that $x_1 > \Delta$ and $\frac{x_n}{x_{n+1}}\rightarrow0$, and let
		\begin{equation}\lb{hmp}
V(x) =\sum_{n=1}^\infty d_n W_n(x- x_n).
		\end{equation}
		For any $\alpha \in [0,\pi)$, $\spec_{\ess}(H^{\alpha})=[0, \infty)$ and moreover:

(a)  If $\sum_{n=1}^{\infty} \lvert d_n\rvert^2 <\infty$ then the spectrum of $H^{\alpha}$ is purely absolutely continuous on  $(0,\infty)$, in the sense that $\chi_{(0,\infty)}\,d\mu^\alpha$ is mutually absolutely continuous with Lebesgue measure on $(0,\infty)$. In particular, 	
			$\spec_{\singc}(H^{\alpha})=\emptyset$, $\spec_{\pp}(H^{\alpha})\subset(-\infty,0]$, $\spec_{\ac}(H^{\alpha})=[0,\infty)$. 	

(b) If $\sum_{n=1}^{\infty} \lvert d_n\rvert^2 = \infty$, then the spectrum of $H^{\alpha}$ is purely singular continuous on $[E_1,E_2]$. 
\end{theorem}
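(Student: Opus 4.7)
The plan is to combine the general spectral criteria of Theorems~\ref{thm2.10}, \ref{cor2.11}, \ref{Lpbiggerthan2criterion} and Lemma~\ref{lem2.4} with a modified Pr\"ufer-variable analysis adapted to the distributional setting. First I would bring $V$ into the framework of Hypothesis~\ref{pot} by decomposing each $W_n = \sigma_n' + \tau_n$ with $\sigma_n \in L^2$, $\tau_n \in L^1$ compactly supported in a slightly enlarged window $[-\Delta', \Delta']$, as in \cite{HrMyk01}; then $\sigma(x) := \sum_n d_n \sigma_n(x - x_n)$ and $\tau(x) := \sum_n d_n \tau_n(x - x_n)$ have pairwise disjoint bump supports for all large $n$, so $\int_x^{x+1}(\sigma^2 + |\tau|)\,dt$ is bounded by a constant multiple of $|d_n|^2+|d_n|$ on the window near $x_n$ and vanishes otherwise. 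Since $d_n\to 0$, condition \eqref{unifzer} holds and Lemma~\ref{lem2.4} yields $\spec_\ess(H^\alpha)=[0,\infty)$.

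The core of the argument is a transfer-matrix analysis at energies $E>0$. Between bumps $T(E;\cdot)$ evolves as the free rotation at frequency $k=\sqrt{E}$, while across the $n$-th bump I would derive, via a free-resolvent Dyson expansion for the first-order system in Hypothesis~\ref{pot}, a uniform-on-compacts expansion of the single-bump transfer matrix
\[
A_n(E) = I + d_n B_n(E) + d_n^2 C_n(E) + o(d_n^2),\qquad B_n,\,C_n\text{ bounded},
\]
with $B_n,C_n\to B,C$ inherited from $W_n\to W$ in $H^{-1}$. In modified Pr\"ufer variables $(R_n,\theta_n)$ recorded at the entry to the $n$-th bump (with $R$ invariant under free evolution) this gives
\[
\log R_{n+1}^2 - \log R_n^2 = d_n\,\alpha(E,\theta_n) + d_n^2\,\beta(E,\theta_n) + o(d_n^2),
\]
where $\alpha(E,\theta)$ has zero mean in $\theta$ and $\overline\beta(E):=\tfrac{1}{2\pi}\int_0^{2\pi}\beta(E,\theta)\,d\theta>0$ for a.e.\ $E>0$ (using $W\neq 0$; the relevant quantity is a squared $H^{-1}\!$-duality pairing of $W$ against $e^{\pm 2ikx}\eta(x)$ for a fixed cutoff $\eta$). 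The sparsity $x_n/x_{n+1}\to 0$ forces $\sqrt{E}(x_{n+1}-x_n)$ to equidistribute modulo $2\pi$ for a.e.\ $E$, so by Weyl equidistribution the $\theta_n$'s are equidistributed, the oscillatory $d_n$-term averages out (via Abel summation / martingale control), and only the $\overline\beta(E)|d_n|^2$ term accumulates.

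For part (a), $\sum|d_n|^2<\infty$ gives $\sup_n\log R_n^2<\infty$ for a.e.\ $E\in(0,\infty)$, hence $\|T(E;\cdot)\|$ is bounded; Theorem~\ref{thm2.10} then yields $\spec_\ac(H^\alpha)\supset[0,\infty)$. To upgrade to purely a.c.\ spectrum on $(0,\infty)$, I would establish a uniform-on-compacts higher-moment bound on the Pr\"ufer amplitude and apply Fatou's lemma to verify hypothesis \eqref{tpnew} of Theorem~\ref{Lpbiggerthan2criterion} for some $p>2$. For part (b), $\sum|d_n|^2=\infty$ gives $R_n\to\infty$ for a.e.\ $E\in[E_1,E_2]$, hence $\|T(E;x_n)\|\to\infty$, so Theorem~\ref{cor2.11} rules out a.c.\ spectrum there. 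Absence of point spectrum would follow by observing that an $L^2$ eigenfunction at energy $E$ must obey $|u|^2\asymp R_n^2$ on each free interval $(x_n+\Delta,x_{n+1}-\Delta)$, so $\int_{x_n}^{x_{n+1}}|u|^2\,dx \gtrsim R_n^2(x_{n+1}-x_n)$ cannot be summable once $R_n$ does not decay against the sparse gaps, an argument that can be made pointwise in $E$ in the spirit of the Simon--Stolz criterion discussed in Section~\ref{section2}.

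The main obstacle I anticipate is the single-bump expansion $A_n(E)$ together with the strict positivity of $\overline\beta(E)$: one needs a second-order Dyson expansion of the distributional transfer matrix in which every term is an $H^{-1}$-duality pairing rather than a pointwise integral, with uniform control in $n$ on compact $E$-sets, and one must identify $\overline\beta(E)$ as a nonzero Fourier-type quantity associated with the limit $W\neq 0$. A secondary subtlety is the justification of Weyl equidistribution in the presence of $E$-dependent phase shifts from the bumps, which must be shown to be small enough (via $d_n\to 0$) not to disturb the equidistribution inherited from the sparse positions $x_n$.
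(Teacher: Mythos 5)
Your overall scaffolding matches the paper's: decompose $W_n=S_n'+T_n$ via \cite{HrMyk01} so Hypothesis~\ref{pot} holds, get $\spec_\ess=[0,\infty)$ from Lemma~\ref{lem2.4}, run Pr\"ufer variables across the bumps, use Theorem~\ref{cor2.11} to kill a.c.\ spectrum in case (b), the $L^p$ criterion (Theorem~\ref{Lpbiggerthan2criterion} with $p=4$, via the Carmona-type formula) in case (a), and rule out eigenvalues by the sparse-gap argument (this is the paper's Proposition~\ref{prop3.9}: $|\delta\log R|\le Cd_n$ per bump plus superexponential gaps, then Theorem~\ref{theoremweightedL2eigenfunction}). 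But the central mechanism you propose for the oscillatory terms --- pointwise-in-$E$ Weyl equidistribution of the Pr\"ufer angles plus Abel summation, yielding $\sup_n\log R_n^2<\infty$ for a.e.\ $E$ in case (a) --- is a genuine gap. Equidistribution only controls unweighted Ces\`aro averages; it does not control the $\ell^2$-weighted sums $\sum_n d_n\,\alpha(E,\theta_n)$ when $d_n\in\ell^2\setminus\ell^1$, and the accumulated $E$-dependent phase corrections make any a.e.-$E$ statement delicate. The paper (following Kiselev--Last--Simon) never proves such pointwise bounds: it proves energy-averaged bounds --- a recursive inequality $B_n\le(1+\rho_n)B_{n-1}$ for $B_n=\int g(k)R(x_n+\Delta)^4\,dk$ in case (a), and $\int g(k)|Q_n(k)|\,dk=o(\sum_{m\le n}d_m^2)$ in case (b) --- by integrating by parts in $k$ (non-stationary phase), which requires the quantitative estimates of Lemma~\ref{argANDradEst}: $\partial_k\theta^{(0)}_n\ge x_n/2$, $|\partial_k^2\theta|\lesssim\min\{x_n^2,1+\sum d_mx_m^2\}$, $|\partial_k\log R|\lesssim\sum d_mx_m$, together with the $\ell^2$-convolution bounds of Remark~\ref{propap12}. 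Your ``higher-moment bound for some $p>2$'' for part (a) is precisely this hard step, and your proposal offers no mechanism for it beyond the equidistribution heuristic; likewise in part (b) the divergence $R_{n_j}\to\infty$ a.e.\ comes from these averaged variance estimates combined with the Borel--Cantelli-type lemma of \cite{MR1628290}, plus a positivity/admissibility analysis of $\tfrac{1}{2k}\hatt T(k)-\bfi\hatt S(k)$ (your $\overline\beta(E)>0$), not from equidistribution.

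A smaller point: the obstacle you flag about needing a second-order Dyson expansion ``in which every term is an $H^{-1}$-duality pairing'' largely dissolves once you have the decomposition $W_n=S_n'+T_n$; the paper works directly with the Pr\"ufer equations \eqref{ODE1}--\eqref{ODE2}, in which $\sigma,\tau$ enter as $L^2$ and $L^1$ functions, so the single-bump expansions (Lemmas~\ref{thetalem} and~\ref{lem3.4}) are ordinary integrals against $S_n,T_n$, with the new $\sigma$-terms being the only structural difference from the classical case.
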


The special choice $W_n \equiv W\in L^{\infty}((-\Delta, \Delta))$ yields Pearson-type classical potentials; that case of Theorem \ref{sparseAltnewtheorem} was proved by Kiselev, Last, Simon \cite{MR1628290}. Our extension allows more singular potentials; for instance, as an illustration of Theorem \ref{sparseAltnewtheorem}, we claim a Kiselev--Last--Simon-type spectral transition  for the  Kronig--Penney model. Concretely, let $H$ be the Schr\"odinger operator acting on $L^2(\bbR_+)$ given by
\begin{equation}\lb{Hop}
H=-\frac{d^2}{dx^2}+\sum_{n=1}^{\infty} d_n \delta (x-x_n),
\end{equation} 
where $\{x_n\}_{n=1}^{\infty}\subset (0,\infty)$ is a sparse sequence satisfying $ x_n/x_{n+1}\rightarrow 0$ as $n\rightarrow\infty$, subject to any self-adjoint condition at $0$. Then for any decaying sequence $(d_n)_{n=1}^\infty$,  $\spec_{\ess}(H)=[0,\infty)$; moreover, the spectrum is purely a.c.\ on $(0,\infty)$ if $(d_n)_{n=1}^\infty$ is square-summable and purely s.c.\ on $(0,\infty)$ otherwise.

Another new feature of our result is that the profile $W_n$ may vary with $n$. Note that this allows examples such as the locally integrable potential
\[
V = \sum_{n=1}^\infty d_n n \chi_{[x_n, x_n + 1/n]} 
\]
where $\chi$ denote characteristic functions. Since $n\chi_{[0,1/n]} \to \delta_0$ in $H^{-1}(\bbR)$,  by Theorem~\ref{sparseAltnewtheorem}, the spectrum is purely a.c.\ on $(0,\infty)$ if the decaying sequence $(d_n)_{n=1}^\infty$ is square-summable and purely s.c.\ on $(0,\infty)$ otherwise.

Although stated in terms of $H^{-1}(\bbR)$,  the starting point in our analysis is a decomposition $W_n=S_n'+T_n$ and the proof must treat these contributions to $\sigma$ and $\tau$ separately. As in the classical case \cite{MR1628290} our proof is based on the analysis of Pr\"ufer variables. However, in the present case this analysis is more intricate due to the appearance of new terms in the differential equations obeyed by Pr\"ufer variables. Namely, in the setting of $H^{-1}$ potential $V=\sigma'+\tau$, as shown in Proposition \ref{prop2.4new}, one has
	\begin{align}\lb{pruf1}
	&\theta' = k - \frac{\tau -\sigma^2}{k} \sin^2(\theta) + \sigma\sin(2\theta),\qquad (\log R)'  = \frac{\tau-\sigma^2}{2k}\sin(2\theta) - \sigma\cos (2\theta),
\end{align}
whereas in the classical case $V\in L^1_{\loc}(\bbR_+)$, as discussed in \cite{MR1628290}, 
\begin{equation}\lb{pruf2}
\theta' = k - \frac{V}{k} \sin^2(\theta),\qquad   (\log R)'  = \frac{V}{2k}\sin(2\theta). 
\end{equation}
An important ingredient in the proof of Theorem \ref{sparseAltnewtheorem} is given by the fact that $\|T(k^2, x)\|$ is comparable to $R(x)$, see Proposition \ref{prop2.4new}. Hence, in order to establish growth or boundedness of eigensolutions and, respectively, the absence or existence of purely absolutely continuous spectrum on $[E_1, E_2]$, it suffices to study the asymptotics for $R(x)$. In Sections \ref{secac} and \ref{secsc}, we describe the asymptotic behavior of $R(x)$  depending on whether or not $\{d_n\}_{n=1}^{\infty}\in\ell^2(\bbN)$. 

\textbf{Acknowledgments:} The authors thank David Damanik and Fritz Gesztesy for useful discussions. Selim Sukhtaiev and Xingya Wang gratefully acknowledge support from the Simons Center for Geometry and Physics, Stony Brook University, where a part of this research was completed during the workshop "Ergodic Operators and Quantum Graphs".

\section{Spectral analysis of Schr\"odinger operators with distributional potentials} \label{section2}

In this section, we consider Schr\"odinger operators in the setting of Hypothesis~\ref{pot}.

\subsection{Self-adjointness and form bounds}
Associated with the differential expression $\ell$ are three linear, densely defined, unbounded operators $H_0, H_{\min}, H_{\max}$ acting on $L^2(\bbR_+)$ defined as follows:
\begin{align}
&H_{\max} u=\ell u,\qquad u\in \dom(H_{\max}):=\{u\in L^2(\bbR_+): u\in\mathfrak D,\, \ell u\in L^2(\bbR_+) \},\\
&H_{0} u= \ell u,\qquad u\in \dom(H_{0}):=\{u\in \dom(H_{\max}): u\text{\ has compact support}\},
\end{align}
and $H_{\min}:=\overline{H_0}$, the closure of $H_0$ in $L^2(\bbR_+)$. Then, upon setting $q=\tau, p=1, r=1, s=\sigma$ in \cite[Section 3]{EGNT13}, we infer
\begin{equation}\lb{hs}
	H_{\min}=\overline{H_0}=H_0^{**}=H_{\max}^*;\qquad H_{\min}\subset H_{\min}^*=H_{\max}\,.  
\end{equation} 
In the following Theorem, we discuss self-adjoint extensions of $H_{\min}$, prove that $\ell$ is limit point at $\infty$ and limit circle at $0$, and obtain auxiliary resolvent estimates. 

\begin{theorem}\lb{thmoper}
Assume Hypothesis \ref{pot}. Then, $\ell$ is limit point at infinity and limit circle at $0$. That is, for all $z\in\bbC$, every solution of $\ell u-zu=0$ lies in $L^2$ near zero, and there is one nontrivial solution that lies in $L^2$ near infinity and one solution that does not, up to scalar multiples. Moreover, for every $u\in\dom(H_{\max})$, the limits 
\begin{equation}\lb{uupr}
u(0)=\lim\limits_{x\rightarrow 0} u(x),\qquad u^{\1}(0)=\lim\limits_{x\rightarrow 0} u^{\1}(x)
\end{equation}
exist and are finite; they give rise to an explicit description of the minimal operator:
\begin{align}
	&H_{\min} u=-(u^{\1})' - \sigma u^{\1} + (\tau-\sigma^2)u,\qquad u\in \dom(H_{\min}),\\
	&\dom(H_{\min})=\{u\in \dom(H_{\max}): u(0)=u^{\1}(0)=0\},
\end{align}
where all self-adjoint extensions of $H_{\min}$ are parametrized by $\alpha\in[0,\pi)$ as follows:
 \begin{align}
 \begin{split}\lb{Halpha}
 &H^{\alpha} u=-(u^{\1})' - \sigma u^{\1} + (\tau-\sigma^2)u,\qquad u\in \dom(H^{\alpha}),\\
 &\dom(H^{\alpha})=\{u\in \dom(H_{\max}): u(0)\cos(\alpha) +u^{\1}(0)\sin(\alpha)=0\}.
 \end{split}
 \end{align}
The quadratic form $\mathfrak h^{\alpha}$ of $H^{\alpha}$ is given by
\begin{equation}
\mathfrak h^{\alpha}[u,v]=
\begin{cases}
\begin{matrix}
\langle u', v'\rangle_{L^2(\bbR_+)}-\langle \sigma u', v\rangle_{L^2(\bbR_+)}-\langle  u, \sigma v'\rangle_{L^2(\bbR_+)}+\\
\hspace{3cm}+\langle \tau u, v\rangle_{L^2(\bbR_+)}-\cot(\alpha) \overline{u(0)}v(0),
\end{matrix}&\alpha\in(0,\pi),\\
\quad\langle u', v'\rangle_{L^2(\bbR_+)}-\langle \sigma u', v\rangle_{L^2(\bbR_+)}-\langle  u, \sigma v'\rangle_{L^2(\bbR_+)}+\langle \tau u, v\rangle_{L^2(\bbR_+)},&\alpha=0.
\end{cases}
\end{equation}
for $u,v\in\dom(\mathfrak h^{\alpha})$, where
\begin{equation}
\dom(\mathfrak h^{\alpha}):=\begin{cases}
	H^1(\bbR_+),\qquad \alpha\in(0,\pi),\\
	H_0^1(\bbR_+):=\{f\in H^1(\bbR_+): f(0)=0\},\qquad \alpha=0.
\end{cases}
\end{equation}
Furthermore, the operator $H^{\alpha}$ is bounded from below and there exist $C=C(\sigma, \tau)>1, \lambda=\lambda(\sigma, \tau)>0$ such that for $E<\min\{{\inf\spec(H^{\alpha}), 0}\}$,
\begin{align}\lb{resineq}
(H^{\alpha}-E)^{-1}\leq C(-\Delta_X-E+\lambda)^{-1},
\end{align}
where $-\Delta_X$ is the Dirichlet Laplacian on $\bbR_+$ if $\alpha=0$ and Neumann Laplacian if $\alpha\in(0,\pi)$. 
\end{theorem}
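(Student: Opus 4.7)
The strategy is to split the proof into three blocks: (a) endpoint classification at $0$ and $\infty$ together with existence of the traces $u(0), u^{[1]}(0)$; (b) construction of the quadratic form and application of the KLMN theorem to identify $H^\alpha$; (c) derivation of the resolvent comparison by operator monotonicity. Throughout I would invoke \cite{EGNT13} for the identities \eqref{hs} and the general quasi-derivative Sturm--Liouville framework under the local integrability assumptions $\sigma\in L^2_{\loc}(\bbR_+)$, $\tau\in L^1_{\loc}(\bbR_+)$, which follow from \eqref{sigmatauboundedness}.

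For (a), I rewrite $\ell u=f$ as the first-order system $\partial_x (u^{[1]},u)^T = A(x)(u^{[1]},u)^T+(-f,0)^T$ where $A(\cdot)$ has entries $-\sigma,\tau-\sigma^2,1,\sigma$ in $L^1_{\loc}(\bbR_+)$; standard $L^1$-ODE theory gives unique solvability and continuous extension of $\vec u$ to $0$ with well-defined values $u(0), u^{[1]}(0)$. Taking $f=0$ shows that all solutions of $\ell u - zu=0$ are bounded (hence $L^2$) near $0$, so $\ell$ is limit circle at $0$; applying the same reasoning with $f=\ell u\in L^2$ yields the boundary values \eqref{uupr} for every $u\in\dom(H_{\max})$ and identifies $\dom(H_{\min})$ as the common kernel of $u\mapsto u(0)$ and $u\mapsto u^{[1]}(0)$. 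I defer limit point at $\infty$ to be a byproduct of (b): once $\mathfrak h^\alpha$ is shown closed and semibounded, the associated self-adjoint operator exhausts the one-parameter family of extensions of $H_{\min}$, forcing deficiency indices $(1,1)$ and leaving no additional data at $\infty$.

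For (b), I would verify the integration-by-parts identity $\langle H^\alpha u,v\rangle=\mathfrak h^\alpha[u,v]$ for $u\in\dom(H^\alpha)$, $v\in\dom(\mathfrak h^\alpha)$, using the traces from (a) and the rewriting $-(u^{[1]})'\bar v = (u'-\sigma u)\bar v'+\sigma u\bar v'-(u^{[1]}\bar v)'$. The crucial estimate is the form bound
\begin{equation*}
\bigl|\langle \sigma u', u\rangle\bigr| + \bigl|\langle \tau u, u\rangle\bigr| + |u(0)|^2 \;\le\; \epsilon\,\|u'\|_{L^2}^2 + C(\epsilon)\,\|u\|_{L^2}^2, \qquad u\in H^1(\bbR_+),
\end{equation*}
for arbitrarily small $\epsilon>0$. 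I would prove it by tiling $\bbR_+$ into intervals $I$ of length $\delta$, invoking the one-dimensional Sobolev bound $\|u\|_{L^\infty(I)}^2 \le \delta\,\|u'\|_{L^2(I)}^2 + 2\delta^{-1}\|u\|_{L^2(I)}^2$, and exploiting \eqref{sigmatauboundedness} to control $\|\sigma\|_{L^2(I)}$ and $\|\tau\|_{L^1(I)}$ uniformly in the tile; letting $\delta\to 0$ drives the $\|u'\|^2$-coefficient to zero. The KLMN theorem applied to the closed Dirichlet form $\|u'\|^2$ on $H^1_0$ (when $\alpha=0$) or the Neumann form on $H^1$ (when $\alpha\in(0,\pi)$) then gives closedness and semiboundedness of $\mathfrak h^\alpha$ and identifies its self-adjoint generator with $H^\alpha$.

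For (c), choosing $\epsilon=1-C^{-1}$ in the form bound produces $\mathfrak h^\alpha[u,u]\ge C^{-1}\|u'\|^2-M\|u\|^2$ on $\dom(\mathfrak h^\alpha)=\dom((-\Delta_X)^{1/2})$, that is, $H^\alpha\ge C^{-1}(-\Delta_X)-M$ as forms. For $E<\min\{\inf\spec H^\alpha,\,-MC/(C-1)\}$, rearranging absorbs the constant $M$ into $C^{-1}\cdot(-E)$ and yields $H^\alpha-E\ge C^{-1}(-\Delta_X-E+\lambda)$ with $\lambda:=CM$; operator monotonicity of the inverse on strictly positive self-adjoint operators then gives \eqref{resineq}. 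The main technical obstacle I expect is the $\delta$-scaling step in the form bound of (b): it requires carefully exploiting the uniform-in-$x$ nature of \eqref{sigmatauboundedness} rather than any global integrability, and balancing $\delta$ against $\epsilon$ so that the $\|u'\|^2$-coefficient can be made strictly less than $1$ without enlarging the form domain. Once that estimate is in hand, closedness, self-adjointness, parametrization of extensions, and the resolvent comparison all follow by standard form-theoretic arguments.
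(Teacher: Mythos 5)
Your blocks (a), (b) and the form-bound/KLMN/Kato machinery in (b)--(c) are essentially the paper's route (the paper uses the Sobolev-type estimates \eqref{sob1}--\eqref{sob2} on unit intervals where you tile with length-$\delta$ intervals; these are interchangeable). However, there is a genuine gap at the heart of the proposal: the limit-point property at $\infty$, which you propose to obtain ``as a byproduct'' of (b). The argument ``the KLMN operators form a one-parameter family of self-adjoint extensions of $H_{\min}$, hence the deficiency indices are $(1,1)$'' is not valid. If $\ell$ were limit circle at $\infty$, then $H_{\min}$ would have deficiency indices $(2,2)$ and a four-real-parameter family of self-adjoint extensions; the operators produced by KLMN from $\mathfrak h^\alpha$ would simply be a one-parameter subfamily, each carrying an implicit boundary condition at infinity encoded by the $H^1(\bbR_+)$ form domain, with no contradiction whatsoever. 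In that scenario the operator $H^\alpha$ defined by \eqref{Halpha} (a boundary condition at $0$ only) would be a proper, non-self-adjoint extension of the KLMN operator, so your identification of the form generator with $H^\alpha$, as well as the claim that the family \eqref{Halpha} exhausts \emph{all} self-adjoint extensions of $H_{\min}$, both presuppose exactly the limit-point statement you are trying to derive. The same applies to showing that $\dom(H^\alpha)\subset H^1(\bbR_+)$, which you need even to write $\mathfrak h^\alpha[u,v]=\langle H^\alpha u,v\rangle$ for $u\in\dom(H^\alpha)$.

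The paper closes this by a genuine argument at infinity: extending $\sigma,\tau$ by zero to $\bbR$ and using the Hryniv--Mykytyuk full-line operator \cite{HrMyk01,MR2978191}, whose form domain is $H^1(\bbR)$, together with a cutoff, it shows that every $u\in\dom(H_{\max})$ has $u'\in L^2$ near infinity; then \eqref{sob1} gives $\sigma u\in L^2(\bbR_+)$, hence $u^{[1]}\in L^2(\bbR_+)$, so $W(u,v)\in L^1(\bbR_+)$ and, since the Wronskian has a limit at infinity, that limit is $0$; by \cite[Lemma 4.4]{EGNT13} this yields limit point at $\infty$, and only then does \cite[Theorem 6.2]{EGNT13} give the parametrization \eqref{Halpha} of all self-adjoint extensions. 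You need to supply an argument of this kind (or some other direct proof that nontrivial non-$L^2$ solutions exist at $\infty$, e.g.\ a Weyl-alternative computation); nothing in your proposal currently does. A minor further caveat: in (c) you prove \eqref{resineq} only for $E<-MC/(C-1)$, i.e.\ for $E$ sufficiently negative rather than for all $E<\min\{\inf\spec(H^\alpha),0\}$; since the paper only applies \eqref{resineq} at one fixed energy below the spectrum, this is acceptable, but you should state the restriction explicitly.
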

\begin{proof}
By Hypothesis \ref{pot}, differential expression $\ell$ is regular at $0$; thus, by \cite[Lemma  3.1]{EGNT13}, all solutions of $\ell u-zu=0$ can be extended by continuity to $0$ so that $u, u^{[1]}$ are absolutely continuous in a neighborhood of $0$. Hence, all such solutions are square integrable near $0$ and $\ell$ is limit circle near $0$.

In this setting, the Wronskian is defined for $u, v \in \cD$ by
\[
W(u,v)(x) = u(x) v^{[1]}(x) - u^{[1]}(x) v(x)
\]
and in order to show that $\ell$ is limit point at infinity, it suffices to check that 
\begin{equation}\lb{wrzer}
\lim\limits_{x\rightarrow\infty} W(f, g)(x)=0,\qquad f,g\in\dom(H_{\max}).
\end{equation}
cf. \cite[Lemma 4.4]{EGNT13}. To that end, we will first prove  that every $f\in \dom(H_{\max})$, $f^{[1]}$  lies in $L^2(\bbR_+)$. Since $f^{[1]} \in \AC(\bbR_+)$, it suffices to analyze it near infinity. Let $\wti \sigma$, $\wti \tau$ be extensions of $\sigma, \tau$ by zero to the whole line $\bbR$ and consider the operator $\wti H$ acting on $L^2(\bbR)$ given by 
\begin{align}
	&\wti H u=\wti \ell u, \wti\ell u:=-(u'-\wti \sigma u)' - \wti\sigma(u'-\wti \sigma u) + (\wti \tau-\wti \sigma^2)u,\qquad u\in \dom(\wti H),\\
	&\dom(\wti H)=\{u\in L^2(\bbR): u\in\AC(\bbR),\,(u'-\wti \sigma u)\in \AC(\bbR),\,\ell u\in L^2(\bbR) \}.
\end{align} 
This operator is self-adjoint, bounded from below and its form domain is given by $H^1(\bbR)$, see \cite{HrMyk01, MR2978191}; in particular, $\dom(\wti H)\subset H^1(\bbR)$. Fix $u\in\dom(H_{\max})$ and let $\wti u$ be the extension of $u$ by zero to the whole line $\bbR$. Pick any $\phi\in C^{\infty}(\bbR)$ with $\supp(\phi)\subset(1/2,\infty)$ and $\phi(x)=1$ for $x\geq 1$. Then, $\wti u\phi\in \dom(\wti H)$ and, as $\dom(\wti H)\subset H^1(\bbR)$, one also has $\wti u\phi \in H^1(\bbR)$. Since $(\wti u\phi)'(x)=u'(x), x\geq 1$, we infer that $u'$ lies in $L^2$ near infinity.  Next, we show that $\sigma u$ lies in $L^2$  and $u$ lies in $H^1$ near infinity. Recall from \cite[Lemma 3.1]{HrMyk01} that for arbitrary interval $I\subset \bbR_+$ of length $1$, $\varepsilon\in(0,1)$, and $\psi\in H^1(I)$,
\begin{align}
&\|\psi\|_{L^{\infty}(I)}^2 \leq \varepsilon \|\psi'\|_{L^{2}(I)}^2+8\varepsilon^{-1}\|\psi\|_{L^{2}(I)}^2,\lb{sob1}\\
&\|\psi\psi'\|_{L^{2}(I)}\leq \varepsilon \|\psi'\|_{L^{2}(I)}^2+4\varepsilon^{-3}\|\psi\|_{L^{2}(I)}^2. \lb{sob2}
\end{align}
In particular, inspired by the proof of \cite[Theorem 3.4]{HrMyk01}, we get
\begin{align}\lb{fsig}
\int_0^{\infty}|\sigma u|^2=\sum_{n=0}^{\infty} \int_n^{n+1}|\sigma u|^2\leq \|\sigma\|_{2, \unif}^2\sum_{n=0}^{\infty} \|u\|_{L^{\infty}(n, n+1)}\underset{\eqref{sob1}}{\leq} C\|u\|_{H^1(\bbR)}^2<\infty.
\end{align}
That is, $\sigma u\in L^2(\bbR_+)$. Consequently, $u^{[1]}=u'-\sigma u \in L^2(\bbR_+)$, and by Cauchy--Schwarz, the Wronskian $W(u,v)$ lies in $L^1(\bbR_+)$. Moreover, since $W(u,v)(x)$ has a limit at infinity, see \cite[Lemma 3.2]{EGNT13}, it must converge to zero as asserted in \eqref{wrzer}. In conclusion, $\ell$ is limit point at infinity.

The fact that all self-adjoint extensions of $H_{\min}$ are determined by the boundary conditions \eqref{Halpha} follows from \cite[Theorem 6.2]{EGNT13} (where one should pick $BC^1_0(u):=u(0)$, $BC^2_0(u):=u^{\1}(0)$).

Let us now switch to quadratic form $\mathfrak h^{\alpha}$. Our first objective is to show that it is relatively bounded with respect to the quadratic form of the Dirichlet or Neumann free Laplacian on $\bbR_+$, depending on the value of $\alpha$. Note that for arbitrary $\varepsilon>0$, employing \eqref{sob1}, \eqref{sob2} as in the proof of \cite[Lemma 3.2]{HrMyk01},
\begin{align}
\begin{split}\lb{formbounds1}
&|\langle \sigma u', u\rangle_{L^2(\bbR_+)}|\leq \sum_{n=0}^{\infty} \int_{n}^{n+1}|\sigma \overline{u'}u|dx\underset{\eqref{sob2}}{\leq} \|\sigma\|_{2,\unif}(\varepsilon \|u'\|^2_{L^2(\bbR_+)}+4\varepsilon^{-3} \|u\|^2_{L^2(\bbR_+)}),\\
&|\langle \tau u, u\rangle_{L^2(\bbR_+)}|\leq \sum_{n=0}^{\infty} \int_{n}^{n+1}|\tau|dx\ \|u\|_{L^{\infty}(n, n+1)}\underset{ \eqref{sob1}}{\leq }\|\tau\|_{1,\unif}(\varepsilon \|u'\|^2_{L^2(\bbR_+)}+8\varepsilon^{-1} \|u\|^2_{L^2(\bbR_+)}),
\end{split}
\end{align}
for arbitrary $u\in H^1(\bbR_+)$; moreover, by \eqref{sob1},
\begin{equation}\lb{formbounds2}
|u(0)|^2\leq \varepsilon \|u'\|^2_{L^2(\bbR_+)}+4\varepsilon^{-3} \|u\|^2_{L^2(\bbR_+)}.
\end{equation}
Let $\mathfrak h^{X}$, $X\in\{D, N\}$ denote the quadratic form corresponding to Dirichlet or Neumann free Laplacian on $\bbR_+$; i.e., $\mathfrak h^{X}(u,u)=\|u'\|^2_{L^2(\bbR_+)}$, $u\in \dom(\mathfrak h^X)$, where $\dom(\mathfrak h^D):= H_0^1(\bbR_+)$ and $\dom(\mathfrak h^N):= H^1(\bbR_+)$.  We will proceed with assuming $\alpha\in(0, \pi)$, the second case $\alpha=0$ can be handled similarly.  For any $a\in(0,1)$, the inequalities \eqref{formbounds1}, \eqref{formbounds2} yield $b\in\bbR$ such that
\begin{align}
|\mathfrak \langle \sigma u', u\rangle_{L^2(\bbR_+)}&+\langle  u, \sigma u'\rangle_{L^2(\bbR_+)}-\langle \tau u, u\rangle_{L^2(\bbR_+)}+\cot(\alpha) \overline{u(0)}v(0)|\\
&\leq a\|u'\|^2_{L^2(\bbR_+)}+b\|u\|^2_{L^2(\bbR_+)},\qquad u\in H^1(\bbR_+).
\end{align}
That is, the lower order terms and the boundary term in the definition of $\mathfrak h^{\alpha}$, considered as quadratic form on $H^1(\bbR_+)$, are relatively bounded with respect to Neumann form $\mathfrak h^N$, with relative bound less than one, see \cite[Section VI.3.3]{K80} or \cite[Chapter X]{MR0493420}. Thus, by \cite[Theorem X.17]{MR0493420}, $\mathfrak h^{\alpha}$ is closed bounded from below quadratic form and there is a unique self-adjoint operator $T^{\alpha}$ acting in $L^2(\bbR_+)$ which satisfies
\begin{equation}
\langle T^{\alpha}u,v\rangle_{L^2(\bbR_+)}=\mathfrak h^{\alpha}(u,v),\qquad u\in H^2(\bbR_+),\,\, v\in \dom(T^{\alpha}). 
\end{equation}
We claim that $H^{\alpha}\subset T^{\alpha}$. Assume this claim, we note that both operators are self-adjoint and therefore must coincide. This implies that $\mathfrak h^{\alpha}$ is the quadratic form of the operator $H^{\alpha}$ which is consequently bounded from below. Returning to $ H^{\alpha}\subset T^{\alpha}$: let $u\in\dom(H^{\alpha})$ and $v\in H^1(\bbR_+)$; then,
\begin{align}
\mathfrak h^{\alpha}(u,v)&=\langle u^{\1}, v'\rangle_{L^2(\bbR_+)}-\langle \sigma u^{\1}, v\rangle_{L^2(\bbR_+)}+\langle (\tau -\sigma^2) u, v\rangle_{L^2(\bbR_+)}-\cot(\alpha) \overline{u(0)}v(0)\\
&=\langle -(u^{\1})'-\sigma u^{\1}+ (\tau -\sigma^2) u, v\rangle_{L^2(\bbR_+)}-\overline{u^{\1}(0)}v(0)-\cot(\alpha) \overline{u(0)}v(0)\\
&=\langle H^{\alpha}u, v\rangle_{L^2(\bbR_+)},
\end{align}
where in the second step, we used the boundary condition $u^{\1}(0)=-\cot(\alpha)u(0)$.

In order to prove \eqref{resineq} (again we focus on the case $\alpha\in(0,\pi)$), we invoke \eqref{formbounds1}, \eqref{formbounds2} to obtain some $C=C(\sigma, \tau)>1$ such that
\begin{equation}
\mathfrak h^{\alpha}(u,u)\leq C(\| u'\|^2_{L^2(\bbR_+)}+\lambda\|u\|^2_{L^2(\bbR_+)}),\qquad u\in H^{1}(\bbR_+). 
\end{equation}
Noting that the left-hand side above is the quadratic form of $H^{\alpha}$ and the right-hand side is the quadratic form of $C(-\Delta_N+\lambda)$, the assertion \eqref{resineq} follows from \cite[Theorem VI 2.21]{K80}, where it is shown that the ordering of quadratic forms implies the ordering of resolvents. 
\end{proof}

\begin{remark}\label{remarkgaugechange}
(i)  
The representation $V=\sigma'+\tau$ is not unique; given two pairs $(\sigma_i, \tau_i)\in L^2_{\loc}(\bbR_+)\times L^1_{\loc}(\bbR_+)$, $i=1,2$ with $\sigma_1'+\tau_1=V=\sigma_2'+\tau_2$ one has
\begin{equation}
\theta:=\sigma_1-\sigma_2, \qquad \theta'=\tau_2-\tau_1,
\end{equation}
so that $\theta\in W^{1,1}_{\loc}(\bbR_+)$. 

(ii) Fix $\theta\in W^{1,1}_{\loc}(\bbR_+)$ and $(\sigma, \tau)\in L^2_{\loc}(\bbR_+)\times L^1_{\loc}(\bbR_+)$. We say that the pair $(\sigma+\theta, \tau-\theta')$ is a gauge change of $(\sigma, \tau)$. The domain $\dom(H_{\max})$ is gauge change invariant since for $u\in \AC(\bbR_+)$ one has
\begin{equation}
(u'-\sigma u)\in \AC(\bbR_+) \iff (u'-(\sigma+\theta) u)\in \AC(\bbR_+)
\end{equation}
and a direct calculation shows that the action of the maximal operator $H_{\max}$ is also gauge change invariant. The gauge change affects the definition of the quasi-derivative $u^{[1]}_j = u' - \sigma_j u$ so that $u^{[1]}_1 = u^{[1]}_2 - \theta u$. Therefore the self-adjoint boundary conditions $u(0) \cos \alpha_j + u^{[1]}_j(0) \sin \alpha_j = 0$ are relabelled by the formula
\[
\cot \alpha_2  = \cot \alpha_1 - \theta(0).
\]
\end{remark}

\begin{remark}\lb{thm2.3}In the setting of Theorem \ref{thmoper},
\begin{equation}
\dim \rank\left((H^{\alpha}-\bfi)^{-1}-(H^{\beta}-\bfi)^{-1}\right)\leq 2.
\end{equation}
This is due to the fact that the deficiency indices of $H_{\min}$ are $(2,2)$ and the abstract Krein's resolvent formula \cite[Theorem A.1]{MR3202926}.
\end{remark}

We can now prove our version of the Blumenthal--Weyl criterion:

\begin{proof}[Proof of Lemma~\ref{lem2.4}]
By Remark \ref{thm2.3} and \cite[Theorem 2.4]{MR929030}, it suffices to prove the statement for $\alpha=0$.  Let $H_D$ and $\mathfrak h_D$ denote respectively the Dirichlet Laplacian and its quadratic form on $\bbR_+$; i.e., using the notation of Theorem \ref{thmoper} with $\alpha=0$, $\sigma=\tau=0$, write $H_D:=H^{0}$, $\mathfrak h_D=\mathfrak h^{0}$. Our goal is to show that for $\sigma, \tau$ as in \eqref{unifzer}, the quadratic form $\mathfrak h^{0}$ is a relative compact perturbation of $\mathfrak h_D$ (see e.g., \cite[Definition 2.12]{MR606197}, \cite[Section IV.4]{MR929030}). This assertion together with \cite[Theorem 2.13]{MR606197} yields $\spec_{\ess}(H^{0})=\spec_{\ess}(H_D)$ and, when combined with $\spec_{\ess}(H_D)=[0,\infty)$, proves the statement. 

Consider the quadratic form:
\begin{align}
&\mathfrak s[u,v]:=-\langle \sigma u', v\rangle_{L^2(\bbR_+)}-\langle  u, \sigma v'\rangle_{L^2(\bbR_+)}+\langle \tau u, v\rangle_{L^2(\bbR_+)},  u,v\in\dom(\mathfrak s)=H^1_0(\bbR_+). 
\end{align}
In order to show that $\mathfrak h^{0}$ is a relative compact perturbation of $\mathfrak h_D$, it suffices to verify
\begin{align}
&(i)\,\,\, |\mathfrak s[u,u]|\leq C (\mathfrak h^0[u,u]+\|u\|^2_{L^2(\bbR_+)})\text{ for any } u\in H^1_0(\bbR_+), \lb{condition1}\\
\begin{split}
&(ii) \text{ if }\sup_j\|u_j\|_{H^1(\bbR_+)}\leq 1\text{, then there exists a subsequence } \{u_{j_m}\}_{m=1}^{\infty}\text{ such that for $\varepsilon>0$}\\
&\text{ there exists $K>1$ such that }  |\mathfrak s [u_{j_m}-u_{j_n}, u_{j_m}-u_{j_n}]<\varepsilon\text{ for all } m,n>K\lb{condition2},
\end{split}
\end{align}
cf. \cite[Theorem 2.14]{MR606197}. The first inequality \eqref{condition1} follows from \eqref{formbounds1}, so it suffices to prove \eqref{condition2}. First, let $\chi_{[a,b]}$ denote the characteristic function of $[a,b]$ and note that
\begin{align}
\begin{split}\lb{sone}
|\mathfrak s[u,u]|&\leq 2|\langle \chi_{[0,t]}\sigma u', u\rangle_{L^2(\bbR_+)}|+|\langle  \chi_{[0,t]} \tau u, u\rangle_{L^2(\bbR_+)}|\\
&\quad + 2|\langle\chi_{[t,\infty)}\sigma u', u\rangle_{L^2(\bbR_+)}|+|\langle  \chi_{[t,\infty)}\tau u, u\rangle_{L^2(\bbR_+)}|,\qquad u\in \dom(\mathfrak s).
\end{split}
\end{align}
Fix arbitrary $\varepsilon>0$,  then for a sequence$\{u_j\}_{j=1}^{\infty}\subset\dom(\mathfrak s)$ with $\sup_j\|u_j\|_{H^1(\bbR_+)}\leq 1$ and sufficiently large, $j$-independent, $t=t(\varepsilon, \sigma, \tau)>0$,
\begin{align}
\begin{split}\lb{stwo}
&2|\langle \chi_{[t,\infty)}\sigma (u_j-u_k)', (u_j-u_k)\rangle_{L^2(\bbR_+)}|+|\langle  \chi_{[t,\infty)} \tau (u_j-u_k), (u_j-u_k)\rangle_{L^2(\bbR_+)}|\\
&\quad \underset{ \eqref{formbounds1}}{\leq} C (\|\chi_{[t,\infty)}\sigma\|_{2,\unif}+\|\chi_{[t,\infty)}\tau\|_{1,\unif} )\|u_j-u_k\|_{H^1(\bbR_+)}\leq \varepsilon/2,\quad \text{for all }j\in\bbN,
\end{split}
\end{align}
where in the last inequality, we used \eqref{unifzer} and $\sup_j\|u_j\|_{H^1(\bbR_+)}\leq 1$. Next, for $t$ defined above, note that  $\sup_j\|\chi_{[0,t]} u_j\|_{H^1(\bbR_+)}\leq 1$ and, due to compactness of the embedding $H^1((0,t))\hookrightarrow L^2((0,t))$, there exists a subsequence $\{u_{j_k}\}_{k=1}^{\infty}$ which is Cauchy in $L^2(\bbR_+)$. For such a subsequence and arbitrary $\varepsilon>0$, there exists $K>1$ such that 
\begin{align}
\begin{split}\lb{sthree}
&2|\langle \chi_{[0,t]}\sigma (u_{j_m}-u_{j_n})', \chi_{[0,t]}(u_{j_m}-u_{j_n})\rangle_{L^2(\bbR_+)}|\\
&\quad +|\langle  \chi_{[0,t]} \tau(u_{j_m}-u_{j_n}) ,(u_{j_m}-u_{j_n})\rangle_{L^2(\bbR_+)}|<\varepsilon/2,\qquad \text{for } m,n>K,
\end{split}
\end{align}
where we used the Cauchy--Schwartz inequality and $\sup_j\|\chi_{[0,t]} u_j\|_{H^1(\bbR_+)}\leq 1$. It follows from \eqref{sone} with $u:=u_{j_m}-u_{j_n}$, \eqref{stwo} and \eqref{sthree} that
\begin{equation}
s [u_{j_m}-u_{j_n}, u_{j_m}-u_{j_n}]|<\varepsilon,\qquad \text{for } m,n>K,
\end{equation}
which yields \eqref{condition2} as required. 
\end{proof}

\begin{proof}[Proof of Corollary \ref{corollaryL1locessspec}]
Let $\sigma(x):=\int_{0}^{x}V(t)dt-\int_{0}^{\infty}V(t)dt$, $\tau=0$. Then $\sigma(x)\rightarrow 0$, $x\rightarrow\infty$, hence, \eqref{sigmatauboundedness} holds, hence, by Theorem \ref{thmoper} $H_V$ is limit point at infinity. In addition one has \eqref{unifzer}, thus, by Lemma \ref{lem2.4}, $\spec_{\ess}(H^{\alpha})=[0,\infty)$ which combined with $\sigma'+\tau=V$ yield $\spec_{\ess}(H_V)=[0,\infty)$ as asserted. 
\end{proof}

At this point let us prove the assertion made in {Example \ref{ex1.10}}. 

\begin{proof}[Proof of Example~\ref{ex1.10}]
The Wigner--von Neumann potential $V$, explicitly defined in \cite[Section 3, Part B]{MR247300}, admits a real-valued nontrivial eigenfuction  $u\in L^2(\bbR_+)$ corresponding to eigenvalue $1$. In particular, for the choice of boundary condition at $0$ corresponding to $u$,  the Schr\"odinger operator $-\frac{d^2}{dx^2}+V$ does not have purely absolutely continuous spectrum on $(0,\infty)$. We set  $\sigma(x):=-\int_{x}^{\infty}V(t)dt$ and $\tau=0$. Then $V = \sigma'+ \tau$ so this is a gauge change of the Wigner--von Neumann potential; in particular, spectral type is unchanged. To prove $\sigma(x)\underset{x\rightarrow\infty}{=}\cO(1/x)$, we recall the asymptotic formula 
\begin{equation}\lb{Wig}
V(t)=-\frac{8\sin(2t)}{t}+\cO(t^{-2}),\ \  t\rightarrow\infty.
\end{equation}
Hence, for some $C, c>0$ and sufficiently large $x$ we have
\begin{align}
\left|\int_{x}^{\infty}V(t)dt\right|\leq \left| \int_{x}^{\infty}\frac{8\sin(2t)}{t}dt \right|+\left|\frac{c}{x}\right|\leq  \left|\frac{4\cos(2x)}{x}\right|+ \left| \int_{x}^{\infty}\frac{4\cos(2t)}{t^2}dt \right|+ \frac{c}{x}\leq \frac{C}{x}.
\end{align}
\end{proof}

\begin{remark} (i) The invariance of the essential spectrum under small at infinity perturbations of the coefficients has been investigated by many authors in various settings, see e.g., \cite{MR1918787, MR1361167, MR0126729} and especially \cite{MR2254485}, which contains many relevant references. The central fact in the classical treatment of this problem via Weyl-type sequences, see \cite[Section 10]{MR1361167}, is that $H^{\alpha}$ has a locally compact resolvent; i.e., $\chi_{(0,n)}(H^{\alpha}-\bfi)^{-1}, n\geq 0$ is compact in $L^2(\bbR_+)$. This still holds in our case, as readily seen from the explicit form of Green's function. However, there is a major obstacle in using the classical approach since $\dom(H^{\alpha})$, as a subset of $L^2(\bbR_+)$, depends on $\sigma, \tau$. Notably, one does not even have the inclusion $C_0^{\infty}(\bbR_+)\subset \dom(H^{\alpha})$ in general; e.g., such an inclusion does not hold when $V$ is not locally $L^2$. The key feature of our proof of Lemma \ref{lem2.4} is that the form domain $\mathfrak h^{\alpha}$ does not depend on $\sigma, \tau$. Interestingly, the latter does depend on $\alpha$, though the invariance of essential spectrum under perturbation of the boundary condition is handled by Krein's formula for the difference of resolvents of two self-adjoint extensions of the minimal operator $H_{\min}$, as discussed in Remark \ref{thm2.3}. 
	
(ii) Relevant to this discussion is  \cite[Theorem 3.2]{MR3134550} (see also \cite{MR2457601}, \cite{MR2198326}), where the full-line version of \eqref{unifzer} is shown to be equivalent to compactness of the multiplier given by an $H^{-1}(\bbR)$ potential. 
\end{remark}

\subsection{Weyl-Titchmarsh theory}  Let us fix $z\in\mathbb{C}$, $g\in L^1_{\loc}(\bbR_+)$ and consider the differential equation
\begin{equation}\lb{specpr}
-(u^{\1})' - \sigma u^{\1} + (\tau-\sigma^2)u - zu=g,\qquad u\in\mathfrak D.
\end{equation}
Rewriting it as a first order system, we get 
\begin{equation}\lb{amat}
\frac{d}{dx}\begin{bmatrix}
	u^{\1}(x)\\
	u(x)
\end{bmatrix} =A(z,x)\begin{bmatrix}
	u^{\1}(x) \\
	u(x)
\end{bmatrix} -
\begin{bmatrix}
	g(x) \\
	0
\end{bmatrix},\qquad
A(z,x):=\begin{bmatrix}
-\sigma & (\tau - \sigma^2) - z\\
1 & \sigma
\end{bmatrix}.  
\end{equation}
Assuming Hypothesis \ref{pot}, since the matrix coefficients in $A(z,x)$ lie in $L^1_{\loc}(\bbR; \bbC^{2\times 2})$, the corresponding initial value problem has a unique locally absolutely continuous solution.  In particular, for $g=0$, $\alpha\in[0,\pi)$, we consider the initial value problem $\ell u-zu=0$ and denote by $\phi_{\alpha,z}$, $\theta_{\alpha,z}$ its solutions satisfying the initial conditions
\begin{equation}\lb{ralpha}
\begin{bmatrix}
\phi_{\alpha, z}^{\1}(0)& \theta^{\1}_{\alpha, z}(0)\\ \phi_{\alpha, z}(0)& \theta_{\alpha, z}(0)
\end{bmatrix}
= R_{\alpha}^{-1}, \qquad R_{\alpha} :=
\begin{bmatrix}
\cos(\alpha)& -\sin(\alpha)\\ \sin(\alpha) & \cos(\alpha)
\end{bmatrix}.
\end{equation}
The solutions are entire with respect to $z$. In the special case $\alpha=0$, we denote $\theta_z:=\theta_{\alpha, z}, \phi_z:=\phi_{\alpha, z}$. Note that any $u\in \mathfrak D$ solving $\ell u=zu$ satisfies
\begin{equation}
\begin{bmatrix}
u^{\1}(x)\\
u(x)
\end{bmatrix}
=
T(z;x,0)
\begin{bmatrix}
	u^{\1}(0)\\
	u(0)
\end{bmatrix},\qquad T(z;x,0) :=\begin{bmatrix}
	\phi_z^{\1}(x)& \theta^{\1}_z(x)\\ \phi_z(x)& \theta_z(x)
\end{bmatrix}.
\end{equation}
Since $W(\phi_z, \theta_z)(x)$ is constant (due to Lagrange identity \cite[Lemma 2.3]{EGNT13}), $\det T(z;x,0)=1$. Thus, the transfer matrix can be defined as
\begin{equation}\lb{trmat}
T(z; x, y):=\begin{bmatrix}
	\phi_z^{\1}(x)& \theta^{\1}_z(x)\\ \phi_z(x)& \theta_z(x)
\end{bmatrix}\begin{bmatrix}
	\phi_z^{\1}(y)& \theta^{\1}_z(y)\\ \phi_z(y)& \theta_z(y)
\end{bmatrix}^{-1},
\end{equation}
where for any $u\in \mathfrak D$ solving $\ell u=zu$, and any $x, y\geq 0$,
\begin{equation}
	T(z;x,y) \begin{bmatrix}
		u^{\1}(y) \\ u(y)
	\end{bmatrix} = \begin{bmatrix}
		u^{\1}(x)\\ u(x)
	\end{bmatrix}.
\end{equation}
We will often denote $T(z;x):= T(z;x,0)$. 

Next, we turn to the Weyl-Titchmarsh theory for $\ell$. Assuming Hypothesis \ref{pot}, since $\ell$ is limit point at infinity, for any $z\in\bbC\setminus \bbR$, there is a 1-dim set of solutions in $L^2(\bbR_+)$ to $\ell u=zu$, where any such non-trivial solution is called a Weyl-Titchmarsh solution at infinity and denoted by $\psi_{\alpha,z}$. Fix any $\psi_{\alpha,z}$; the Weyl-Titchmarsh $m$-function is given by 
\begin{equation}\lb{mpsi}
m_{\alpha}(z)= -\frac{W(\psi_{\alpha,z},\theta_{\alpha,z})}{W(\psi_{\alpha,z},\phi_{\alpha,z})}=\frac{\cos(\alpha) \psi_{\alpha, z}^{\1}(0) - \sin(\alpha) \psi_{\alpha, z}(0)}{\sin(\alpha) \psi_{\alpha, z}^{\1}(0) + \cos(\alpha) \psi_{\alpha, z}(0)} ,
\end{equation}
where note that $m_{\alpha}(z)$ is independent of the choice of $\psi_{\alpha,z}$. Note that the boundary condition affects the Weyl function by a rotation matrix: denoting by $\simeq$ the projective relation on $\bbC^2 \setminus \{0\}$,
\begin{equation}\label{rotationonmalpha}
\begin{bmatrix}
m_\alpha(z) \\
1
\end{bmatrix}
\simeq
R_\alpha \begin{bmatrix}
m_0(z) \\
1
\end{bmatrix}.
\end{equation}

 Our next objective is to show that $ m_{\alpha}(z)$ can be obtained by the intersection of Weyl disks defined as
\begin{align}
\begin{split}\lb{dal}
D_x^{\alpha}(z) :&= \left\{\cU_{\alpha}\Big| u\neq 0\in \mathfrak D,\, \ell u-zu=0,\, \bfi W(\overline{u},u)(x)<0 \right\}, \\
\cU_{\alpha}:&= \frac{\cos(\alpha) u^{\1}(0) - \sin(\alpha) u(0)}{\sin(\alpha) u^{\1}(0) + \cos(\alpha) u(0)}.
\end{split}
\end{align}
To motivate this definition, let us reformulate it using the M\"obius transformations. To that end, denote $\hat{\mathbb{C}}=\mathbb{C}\cup\{\infty\}$ and introduce the quotient map $\pi:\mathbb{C}^2\setminus\{\vec{0}\} \to \hat{\mathbb{C}}$ given by $\pi(\begin{smallmatrix} w_1\\w_2\end{smallmatrix}) = \frac{w_1}{w_2}$. The M\"obius transformation $\cM[A]:\hat{\mathbb{C}}\to \hat{\mathbb{C}}$ associated with $A\in M_{2\times 2}(\mathbb{C})$ is uniquely defined via $\cM[A]\circ \pi = \pi \circ A$. Returning to \eqref{dal}, note that
\begin{align}
&R_{\alpha}\begin{bmatrix}
u^{\1}(0)\\u(0)
\end{bmatrix}
=
R_{\alpha}T^{-1}(z;x,0)
\begin{bmatrix}
u^{\1}(x)\\u(x)
\end{bmatrix},\\
&\bfi W(\overline u, u)(x)<0 \quad\iff\quad\frac{u^{\1}(x)}{u(x)}\in\bbC_+,
\end{align}
where $\bbC_+:=\{z\in\bbC: \im z>0\}$. Therefore,
\begin{equation}\lb{dal1}
D^{\alpha}_x(z)=\cM[R_{\alpha}T^{-1}(z;x,0)](\bbC_+).
\end{equation}
This identity, together with the fact that M\"obius transformations map generalized disks in $\hat{\bbC}$ to generalized disks, yields that $D^{\alpha}_x(z)$ is a generalized disk in $\hat{\bbC}$. We will show below that, for $z\in\bbC_+$ and $x>0$, the disks $D_x^{\alpha}(z)\subset \bbC_+$ shrink to a point in a monotone fashion as $x\uparrow +\infty$.

\begin{proposition}\lb{prop2.4}
	Let $z\in\mathbb{C}_+$, and consider $u\in\mathfrak D$, $\ell u=zu$. Then, for any $x\geq 0$,
	\begin{equation}\lb{firstidcl}
	2\Im z  \int_0^x |u(t)|^2\,\dd t = \bfi W(\overline{u},u)(x) - \bfi W(\overline{u},u)(0)\,. 
	\end{equation}
	In addition, if $u\not=0$, then the function 
	\begin{equation}
	\bfi W(\overline{u},u)(x) = -2\Im(\overline{u(x)}u^{\1}(x)) 
	\end{equation}
	is real-valued and strictly increasing in $x$.
\end{proposition}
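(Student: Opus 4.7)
The strategy is the standard Lagrange-identity argument, adapted to the quasi-derivative framework. The key observation is that since $\sigma, \tau$ are real-valued, $\overline u$ lies in $\mathfrak D$, the quasi-derivative of $\overline u$ equals $\overline{u^{[1]}} = \overline{u}' - \sigma \overline u$, and $\overline u$ solves the conjugated equation $\ell\overline u = \overline z\,\overline u$. Since $u$ and $u^{[1]}$ are both in $\AC(\bbR_+)$, the product $W(\overline u,u) = \overline u\, u^{[1]} - \overline{u^{[1]}}\, u$ is absolutely continuous and may be differentiated a.e.\ by the usual product rule.

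I would then carry out the two derivative computations in parallel. Using $u' = u^{[1]} + \sigma u$ and the equation $(u^{[1]})' = -\sigma u^{[1]} + (\tau-\sigma^2)u - zu$, a direct calculation gives
\begin{equation}
\frac{d}{dx}\bigl(\overline{u}\, u^{[1]}\bigr) = \overline{u^{[1]}}\,u^{[1]} + (\tau-\sigma^2)|u|^2 - z\,|u|^2,
\end{equation}
where the cross terms $\sigma\overline u\, u^{[1]}$ from $\overline{u}'$ and $-\sigma\overline u\, u^{[1]}$ from $(u^{[1]})'$ cancel. The analogous computation for $\overline{u^{[1]}}\,u$, with $z$ replaced by $\overline z$, produces the same expression save for the last term. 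Subtracting yields
\begin{equation}
\frac{d}{dx}W(\overline u,u)(x) = (\overline z - z)|u(x)|^2, \qquad\text{equivalently}\qquad \frac{d}{dx}\bigl[\bfi W(\overline u,u)(x)\bigr] = 2\Im z\,|u(x)|^2,
\end{equation}
and integrating from $0$ to $x$ delivers \eqref{firstidcl}.

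For the remaining assertions, the identity $\bfi W(\overline u,u) = \bfi\bigl(\overline u\,u^{[1]} - \overline{u\,^{[1]}}\,u\bigr) = -2\Im(\overline{u(x)}u^{[1]}(x))$ is immediate and confirms real-valuedness. For strict monotonicity with $u\neq 0$, note that the first-order system \eqref{amat} with $g=0$ has $L^1_{\loc}$ coefficients and hence admits unique locally absolutely continuous solutions for given initial data at any point; consequently, the set $\{x\ge 0 : u(x) = u^{[1]}(x)=0\}$ is empty, and in particular $|u|^2$ cannot vanish identically on any subinterval. Combined with $\Im z > 0$, this shows that $x\mapsto \int_0^x |u(t)|^2\,\dd t$ is strictly increasing, hence so is $\bfi W(\overline u,u)$. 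There is no serious obstacle in this argument; the only care required is the bookkeeping of $\sigma$- and $(\tau-\sigma^2)$-terms in the derivative computation, which is precisely why the quasi-derivative formalism was introduced in the first place.
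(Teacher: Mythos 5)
Your proposal is correct and follows essentially the same route as the paper: differentiate $\bfi W(\overline u,u)$ using the quasi-derivative identities to get $2\Im z\,|u|^2$, integrate, and read off real-valuedness from $\bfi W(\overline u,u)=-2\Im(\overline u\,u^{[1]})$. The only (harmless) variation is in the monotonicity step, where you invoke uniqueness for the first-order system to rule out $u$ vanishing on a subinterval, while the paper notes that at any zero of $u$ one has $u^{[1]}\neq 0$, so zeros of $u$ are isolated; both yield strict increase.
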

\begin{proof}
	Using the identities $u' = u^{\1} + \sigma u$, $\overline{u}' = \overline{u}^{\1} + \sigma\overline{u}$, $ (u^{\1})' = -\sigma u^{\1} + (\tau-\sigma^2)u - zu$, and $(\overline{u}^{\1})' = -\sigma \overline{u}^{\1} + (\tau-\sigma^2)\overline{u} - \overline{z}\overline{u}$, we compute that \[ \bfi W(\overline{u},u)'  = \bfi (\overline{u}u^{\1} - \overline{u}^{\1}u)' = 2(\Im z)u\overline{u}\,. \] Then, integrating both sides of the above identity from $0$ to $x$ yields \eqref{firstidcl}. Next, since \[ \bfi W(\overline{u},u)(x) = -2 \Im (\overline{u(x)}u^{\1}(x))\,, \] $\bfi W(\overline{u},u)(x)$ is a real-valued function of $x$. If $u$ is a non-trivial eigensolution and $u(y) = 0$ for some $y$, then $u^{\1}(y)\neq 0$ and thus $u$ only has isolated zeros. In particular, $\bfi W(\overline{u},u)' = 2(\im z)u\overline{u}>0$ away from a discrete set, and so $\bfi W(\overline{u},u)$ is strictly increasing.
\end{proof}

The strict increasing property above corresponds to the fact that the operator obeys the Atkinson condition, or equivalently, the corresponding canonical system has no singular intervals. We now describe the Weyl disk formalism:

\begin{proposition}\lb{prop2.5}
	Assume Hypothesis \ref{pot} and fix $z\in\mathbb{C}_+$, $\alpha\in [0,\pi)$. Then,
	\begin{itemize}
		\item[(i)] For  $x> 0$, the set $D_x^{\alpha}(z)$ is a  disk in $\mathbb{C}_+$. 
		\item[(ii)]The disks from (i) are strictly nested; i.e.,
		\begin{equation}\lb{nest}
		\overline{D_y^{\alpha}(z)}\subset D_x^{\alpha}(z),\qquad x<y.
		\end{equation}
		\item[(iii)] The intersection of these disks is a single element set consisting of the Weyl-Titchmarsh coefficient $m_{\alpha}(z)$; i.e.,
		\begin{equation}\lb{inter}
		\bigcap_{x\geq 0}\overline{ D_x^{\alpha}(z)} = \{m_{\alpha}(z)\}.
		\end{equation}
		Moreover,  $\theta_{\alpha, z}+m_{\alpha}(z)\phi_{\alpha, z}$ is a Weyl-Titchmarsh solution. 
		\item[(iv)]  The mapping $z\mapsto m_{\alpha}(z)$ is a Herglotz function; i.e., analytic function $\bbC_+\to\bbC_+$.
	\end{itemize}
\end{proposition}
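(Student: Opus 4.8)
The plan is to establish the four assertions of Proposition~\ref{prop2.5} by exploiting the M\"obius-transformation description \eqref{dal1} together with the monotonicity from Proposition~\ref{prop2.4}. First I would prove (i): by \eqref{dal1} the set $D_x^\alpha(z)$ is the image of $\bbC_+$ under a M\"obius transformation, hence a generalized disk; it remains to show it is a genuine bounded disk inside $\bbC_+$ rather than a half-plane. To see that $D_x^\alpha(z) \subset \bbC_+$, note that any $\cU_\alpha \in D_x^\alpha(z)$ is of the form $\pi(R_\alpha \binom{u^{[1]}(0)}{u(0)})$ with $\bfi W(\overline u, u)(x) < 0$; by \eqref{firstidcl} this forces $\bfi W(\overline u, u)(0) < \bfi W(\overline u, u)(x) < 0$, and a direct computation (using $R_\alpha$ orthogonal) identifies $\Im \cU_\alpha$ with a positive multiple of $-\bfi W(\overline u, u)(0)$, so $\Im \cU_\alpha > 0$. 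Boundedness away from $\partial\bbC_+$ uses that $x>0$ so the inequality $\bfi W(\overline u, u)(0) < \bfi W(\overline u, u)(x)$ is strict with a gap controlled by $2\Im z \int_0^x |u|^2$, which by the Atkinson/strict-increase property of Proposition~\ref{prop2.4} is uniformly positive on the set where $\bfi W(\overline u,u)(x) = -1$, say (one normalizes $u$); this yields a genuine disk.

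Next, for (ii) I would characterize $\overline{D_x^\alpha(z)}$ as the image under $\cM[R_\alpha T^{-1}(z;x,0)]$ of $\overline{\bbC_+}$, and observe that the nesting $\overline{D_y^\alpha(z)} \subset D_x^\alpha(z)$ for $x < y$ is equivalent to the statement that, pulling back by $T(z;y,0)T^{-1}(z;x,0) = T(z;y,x)$, one has $\cM[\text{(appropriate matrix)}]$ mapping $\overline{\bbC_+}$ strictly inside $\bbC_+$. Concretely, a point $\cU_\alpha$ lies in $\overline{D_x^\alpha(z)}$ iff the corresponding solution $u$ satisfies $\bfi W(\overline u, u)(x) \le 0$, and it lies in $D_y^\alpha(z)$ iff $\bfi W(\overline u, u)(y) < 0$; since \eqref{firstidcl} gives $\bfi W(\overline u, u)(y) = \bfi W(\overline u, u)(x) + 2\Im z \int_x^y |u|^2$, and the integral is strictly positive for nontrivial $u$ (isolated zeros, as in Proposition~\ref{prop2.4}), the strict inclusion follows immediately. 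For (iii), strict nesting plus the fact that each $\overline{D_x^\alpha(z)}$ is compact gives a nonempty intersection; to see it is a single point one shows the radii shrink to zero, which is the standard limit-point/limit-circle dichotomy argument — the radius of $D_x^\alpha(z)$ is comparable to $\big(2\Im z \int_0^x |\phi_{\alpha,z}(t)|^2\,dt\big)^{-1}$, and limit point at infinity (already proven in Theorem~\ref{thmoper}) means $\phi_{\alpha,z} \notin L^2$ so this diverges. That the unique limit point equals $m_\alpha(z)$ of \eqref{mpsi} follows by checking $\theta_{\alpha,z} + m_\alpha(z)\phi_{\alpha,z}$ is the $L^2$ Weyl solution: it is the unique combination whose "value at infinity" lies in every $\overline{D_x^\alpha(z)}$, and one verifies via \eqref{mpsi} and \eqref{ralpha} that this combination has the correct Wronskians with $\phi_{\alpha,z}$, $\theta_{\alpha,z}$, hence is proportional to $\psi_{\alpha,z}$ and square-integrable by Proposition~\ref{prop2.4} applied on $(x,\infty)$.

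For (iv), analyticity of $z \mapsto m_\alpha(z)$ on $\bbC_+$ follows because $m_\alpha(z)$ is the locally uniform limit of the analytic functions $z \mapsto \cM[R_\alpha T^{-1}(z;x,0)](i)$ (centers of the nested disks, say), the convergence being uniform on compacts by the shrinking-radius estimate together with a lower bound on $\int_0^x |\phi_{\alpha,z}|^2$ that is locally uniform in $z$; alternatively one uses the explicit formula \eqref{mpsi} with $\psi_{\alpha,z}$ chosen analytically in $z$ (e.g. the fixed-point construction of the Weyl solution). The Herglotz property $\Im m_\alpha(z) > 0$ for $z \in \bbC_+$ is then immediate from (i)--(iii): $m_\alpha(z) \in \overline{D_x^\alpha(z)} \subset \bbC_+$ for every $x$, and in fact $m_\alpha(z)$ lies in the open upper half-plane because it is in the interior $D_x^\alpha(z)$ for $x$ large enough (being a limit of a strictly nested sequence of disks, $m_\alpha(z)$ is in the interior of $\overline{D_{x_0}^\alpha(z)}$ for any fixed $x_0$). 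I expect the main obstacle to be the bookkeeping in assertion (i) — verifying carefully that the generalized disk $\cM[R_\alpha T^{-1}(z;x,0)](\bbC_+)$ is a bounded disk strictly inside $\bbC_+$ for every $x>0$, which requires translating the Wronskian sign conditions through the M\"obius transformation and using the strict monotonicity (Atkinson condition) of Proposition~\ref{prop2.4} to rule out the degenerate (half-plane) case; the nesting and the single-point/Herglotz claims are then comparatively routine consequences of \eqref{firstidcl}.
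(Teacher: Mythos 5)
Your argument is correct and rests on the same two pillars as the paper's proof: the M\"obius-transform description \eqref{dal1} and the strict Wronskian monotonicity of Proposition~\ref{prop2.4}. The differences are mostly in organization, but a couple are worth noting. For part (i), you verify $D_x^\alpha(z)\subset\bbC_+$ directly via the determinant identity $\Im\cU_\alpha = -\tfrac{1}{2|v_2|^2}\,\bfi W(\ol u,u)(0)$ (with $v=R_\alpha\vec u(0)$) and then argue boundedness by bounding the boundary circle away from $\bbR$; the paper instead proves (ii) first and reads off (i) as a corollary of $\ol{D_x^\alpha(z)}\subset D_0^\alpha(z)=\bbC_+$, which is slicker since it avoids the separate boundedness estimate. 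For part (iii), your route is the textbook Weyl-circle argument: radius $r(x)\asymp \bigl(2\Im z\int_0^x|\phi_{\alpha,z}|^2\bigr)^{-1}\to 0$ because $\phi_{\alpha,z}\notin L^2$; the paper avoids ever computing or invoking the radius formula by simply taking an arbitrary $w$ in the intersection, constructing $\psi_{\alpha,z}$ with $\pi(R_\alpha\vec\psi_{\alpha,z}(0))=w$, reading off $\psi_{\alpha,z}\in L^2$ from \eqref{firstidcl}, and identifying $w=m_\alpha(z)$ via \eqref{mpsi}. The paper's version is more economical: it needs neither the explicit radius formula (which would have to be re-derived in the quasi-derivative setting) nor the auxiliary fact that $\phi_{\alpha,z}\notin L^2$ for $\Im z>0$ (which, as you should note, is not an immediate rephrasing of ``limit point'' but requires the short additional observation that an $L^2$ eigensolution satisfying the $\alpha$-boundary condition would make $z$ a non-real eigenvalue of the self-adjoint $H^\alpha$). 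For part (iv), you give a direct argument via locally uniform convergence of the disk centers, whereas the paper cites \cite[Theorem 8.2]{EGNT13}; both are fine. In short, the proposal is correct, but the paper's proof of (i) and especially (iii) is leaner because it never leaves the Wronskian-inequality level of argument.
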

\begin{proof}
	{\it Parts (i) and (ii)}. Recall that M\"obius transformations map generalized disks in $\hat{\bbC}$ to generalized disks, and the boundary circles are mapped accordingly. To prove \eqref{nest}, fix $u\in\mathfrak D$ solving $\ell u=zu$ with $\cU_{\alpha}\in \overline{D_y^{\alpha}(z)}$, where recall $\cU_{\alpha}$ from \eqref{dal}. By definition of $\overline{D^{\alpha}(y)}$, $\bfi W(\overline{u},u)(y)\leq 0$, and by \eqref{firstidcl}, $\bfi W(\overline{u},u)(x)<0$. Thus, $\cU_{\alpha}\in {D_x^{\alpha}(z)}$. Finally, for any $x>0$, due to \eqref{dal1}, $D^{\alpha}_x(z)$ a generalized disk and, since $\overline{D_x^{\alpha}(z)}\subset D_0^{\alpha}(z)=\bbC_+$, it is a disk contained in $\bbC_+$.

	{\it Part (iii)}. Pick any $w\in \bigcap_{x\geq 0} \overline{D_x^{\alpha}(z)}$, where note that the intersection is not empty by parts (i), (ii). Our objective is to show that $w=m_{\alpha}(z)$. To that end, let $\psi_{\alpha,z}$ be the unique solution to the initial value problem
	\begin{equation}\lb{uww}
	\ell \psi_{\alpha,z}-z\psi_{\alpha,z}=0,\qquad \begin{bmatrix}
		\psi_{\alpha,z}^{\1}(0) \\ \psi_{\alpha,z}(0)
	\end{bmatrix} = \begin{bmatrix}
		\cos\alpha & \sin\alpha \\
		-\sin\alpha & \cos\alpha
	\end{bmatrix}\begin{bmatrix}
		w\\ 1
	\end{bmatrix} = R_{\alpha}^{-1}\begin{bmatrix}
		w\\ 1
	\end{bmatrix}.
	\end{equation}
	Let us see that $\psi_{\alpha,z}$ is a Weyl-Titchmarsh solution at infinity. Since
	\begin{equation}
		R_{\alpha}\begin{bmatrix}
		\psi_{\alpha,z}^{\1}(0) \\ \psi_{\alpha,z}(0)
	\end{bmatrix} = R_{\alpha}R_{\alpha}^{-1}\begin{bmatrix}
		w\\ 1
	\end{bmatrix} = \begin{bmatrix}
		w\\ 1
	\end{bmatrix}\,,
	\end{equation}
	applying the quotient map $\pi:\mathbb{C}^2\setminus\{\vec{0}\} \to \hat{\mathbb{C}}$ on both sides, it follows that
	\begin{equation}\lb{wm}
	w = \frac{\cos(\alpha) \psi_{\alpha,z}^{\1}(0) - \sin(\alpha) \psi_{\alpha,z}(0)}{\sin(\alpha) \psi_{\alpha,z}^{\1}(0) + \cos(\alpha) \psi_{\alpha,z}(0)}  \,.
	\end{equation}
	Since $w\in {D_x^{\alpha}(z)}$ for all $x> 0$, $\bfi W(\overline{\psi_{\alpha,z}},\psi_{\alpha,z})(x)<0$ for all $x>0$. Thus, by \eqref{firstidcl}, \[ 2\im z\int_0^x |\psi_{\alpha,z}(y)|^2\,\dd y = iW(\overline{\psi_{\alpha,z}},\psi_{\alpha,z})(x)-iW(\overline{\psi_{\alpha,z}},\psi_{\alpha,z})(0) \leq -iW(\overline{\psi_{\alpha,z}},\psi_{\alpha,z})(0), \]
	showing that $\psi_{\alpha,z}\in L^2(\bbR_+)$. Since $u_w$ is a non-trivial $L^2(\bbR_+)$ solution to $\ell u=zu$, it is a Weyl-Titchmarsh solution at infinity and thus
\begin{equation}
m_{\alpha}(z)=\frac{\cos(\alpha) \psi_{\alpha, z}^{\1}(0) - \sin(\alpha) \psi_{\alpha, z}(0)}{\sin(\alpha) \psi_{\alpha, z}^{\1}(0) + \cos(\alpha) \psi_{\alpha, z}(0)}  = w\,.
\end{equation}
Moreover, note that $W(\psi_{\alpha,z}, \phi_{\alpha,z})(0) =  1$:
\begin{align*}
	\psi_{\alpha,z}(0)\cos\alpha + \psi_{\alpha,z}^{\1}(0)\sin\alpha & = \begin{bmatrix}
		\sin\alpha & \cos\alpha
	\end{bmatrix} \begin{bmatrix}
		\psi_{\alpha,z}^{\1}(0)\\ \psi_{\alpha,z}(0)
	\end{bmatrix} = \begin{bmatrix}
		\sin\alpha & \cos\alpha
	\end{bmatrix} R_{\alpha}^{-1} \begin{bmatrix}
		w\\ 1
	\end{bmatrix} = 1\,;
\end{align*}
thus, $\psi_{\alpha,z} = \theta_{\alpha, z}+m_{\alpha}(z)\phi_{\alpha, z}$ is in fact the normalized Weyl-Titchmarsh solution at infinity.

{\it Part (iv).} Follows from \cite[Theorem 8.2]{EGNT13} and parts (i), (ii), (iii) above.
\end{proof}

To conclude this subsection, we recall from \cite[Section 9]{EGNT13} the spectral decomposition for the operator $H^{\alpha}$. The  Herglotz function $m_{\alpha}$ discussed in Proposition \ref{prop2.5} (iv) gives rise to a Borel measure $\mu^{\alpha}$ via the Stieltjes--Livsic inversion formula
\begin{equation}\lb{malp}
\mu^{\alpha}((\lambda_1, \lambda_2]):=\lim\limits_{\delta\downarrow 0}\lim\limits_{\varepsilon\downarrow 0}\frac{1}{\pi}\int^{\lambda_2+\delta}_{\lambda_1+\delta}\Im m_{\alpha}(\alpha+\bfi \varepsilon)d\lambda,
\end{equation}
for real numbers $\lambda_1<\lambda_2$. The operator $H^{\alpha}$ is unitarily equivalent to the operator of multiplication by the independent variable in the space $L^2(\bbR, \mu^{\alpha})$ and the classical spectral description via boundary values of $m_{\alpha}(z)$ holds, see \cite[Section 9]{EGNT13}. For instance, as in the classical setting, if $\alpha - \beta \notin \pi \bbZ$, then a.c.\ parts of $\mu^\alpha$, $\mu^\beta$ are mutually a.c., and their singular parts are mutually singular.

We will return to a detailed analysis of the absolutely continuous part of spectral measure $\mu^{\alpha}$ in Section \ref{sec2.3}, where we will rely on estimates for eigensolutions discussed next.

\subsection{Eigensolution estimates}\lb{eigensol} In this section, we derive auxiliary estimates for solutions of $\ell u=Eu$, $E\in\bbR$, $u\in\mathfrak D$. To describe the main assertions, let us fix $\lambda>0$ and denote
\[
\vec{u}(x) := \begin{pmatrix}
u^{\1}(x) \\
u(x)
\end{pmatrix}, \qquad \|\vec{u}(x)\|^2:=|u^{\1}(x)|^2+|u(x)|^2.
\]
In the estimates that follow, we give bounds with explicit dependence on the parameter $E$; we do not optimize these estimates, but we will use explicit estimates in some of the proofs that follow.

\begin{lemma}\label{lem2.6}
 Assume Hypothesis \ref{pot}. There exist constants $C_1, C_2, C_3, C_4, C_5 \in (0,\infty)$  which 
depend only on $\lVert \sigma \rVert_{2,\unif}$, $\lVert \tau \rVert$ such that, for every $E \in \bbR$ and every real-valued solution $u\in\mathfrak D$ of $\ell u = E u$:

(i) on every interval $I\subset\bbR_+$ with $|I|=\lambda \le 1$,
	\begin{align}
	\|\vec{u}(y)\|\leq 
	 C_1 e^{\lambda \lvert E \rvert} 
	 \|\vec{u}(x)\|,\qquad \forall x,y\in I\lb{comp1}.
	\end{align}

(ii) on every closed interval $I\subset\bbR_+$ with $|I|=\lambda \le 1$,
\[
 \max_{x \in I} \lvert u^{[1]}(x) \rvert \le C_2 \frac{ e^{\lambda \lvert E \rvert}}{\lambda} \max_{x \in I} \lvert u(x) \rvert.
\]

(iii) for $\delta = C_3(1 + |E|)^{-1}$,  at least one of the infimums
\[
\inf_{[y-\delta, y] \cap \bbR_+} \lvert u(x) \rvert , \qquad \inf_{[y, y+\delta]} \lvert u(x) \rvert 
\]
is larger or equal to $\lvert u(y) \rvert / 2$. 

(iv) for every $\epsilon \ge \frac 14$ and every $y \ge \epsilon$,
\begin{equation}\label{comp2}
\lvert u(y) \rvert^2 \le C_4 (1+\lvert E\rvert)^{2} \int_{y - \epsilon}^{y+\epsilon} \lvert u(x) \rvert^2 \,dx
\end{equation}

(v)  for every $\epsilon  \ge \frac 12$ and every $y \ge \epsilon$,
\[
\lvert u^{[1]}(y) \rvert^2 \le C_5 e^{2\epsilon |E|} (1+|E|)^2  \int_{y - \epsilon}^{y+\epsilon} \lvert u(x) \rvert^2 \,dx
\]
\end{lemma}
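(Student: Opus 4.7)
The plan is to prove the five parts in the stated order, with (iv) following from (iii) and (v) from (i), (ii), and (iv); part (i) serves as a workhorse throughout.

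For \textbf{part (i)}, I would rewrite the eigenvalue equation $\ell u = Eu$ as the first-order system $\vec u' = A(E,\cdot)\vec u$ from \eqref{amat} and apply Gronwall's inequality to the integrated form $\vec u(y) - \vec u(x) = \int_x^y A(E,t)\vec u(t)\,dt$. The operator norm $\|A(E,t)\|$ is bounded by $|\sigma(t)| + |\tau(t)| + \sigma^2(t) + |E| + 1$, so for any interval $I \subset \bbR_+$ of length $\lambda \le 1$, Cauchy--Schwarz applied to $|\sigma|$ gives
\[
\int_I \|A(E,t)\|\,dt \le \lambda^{1/2}\|\sigma\|_{2,\unif} + \|\tau\|_{1,\unif} + \|\sigma\|_{2,\unif}^2 + \lambda(|E|+1),
\]
and Gronwall delivers \eqref{comp1} with $C_1$ depending only on the two uniform norms.

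For \textbf{part (ii)}, I would use the identity
\[
\int_I u^{[1]}(t)\,dt = u(b) - u(a) - \int_I \sigma(t) u(t)\,dt, \qquad I = [a,b],
\]
whose right-hand side is bounded by $(2 + \|\sigma\|_{2,\unif})\max_I|u|$ via Cauchy--Schwarz and $\lambda\le 1$. Since $u^{[1]}$ is continuous, the mean value theorem for integrals produces $\xi \in I$ with $|u^{[1]}(\xi)| \le \lambda^{-1}(2+\|\sigma\|_{2,\unif})\max_I|u|$, and part (i) applied to the pair $(\xi, x)$ for $x \in I$ propagates this pointwise bound to all of $I$.

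\textbf{Part (iii)} is the main obstacle. My plan: normalize $u(y) = 1 > 0$, and via the reflection $x \mapsto 2y - x$ reduce to $N := u^{[1]}(y) \ge 0$; then show $u(x) \ge 1/2$ on $[y, y+\delta]$ for some $\delta = C_3/(1+|E|)$. Proceed by a continuity argument: let $h_* \in (0, \delta]$ be the first time $u(y+h_*) = 1/2$ is attained (if such a time exists). On $[y, y+h_*]$ one has the a priori bound $u \ge 1/2$, and iterating the Volterra form of $\vec u' = A\vec u$ yields
\[
u(y+h) - 1 - Nh = \int_0^h (h-r)\bigl[-\sigma u^{[1]} + (\tau - \sigma^2 - E) u\bigr](y+r)\,dr + \int_0^h \sigma(y+s)\,u(y+s)\,ds,
\]
whose right-hand side is to be estimated using (i), Cauchy--Schwarz, and the uniform norms of $\sigma, \tau$. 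The crux is that naive bounds yield errors proportional to $\sqrt{1+N^2}$, while the compensating term $Nh$ is only linear in $N$; a bootstrap exploiting the a priori bound $u \ge 1/2$ together with the Riccati identity $(u^{[1]}/u + \sigma)^2 + (u^{[1]}/u)' = \tau - E$, which upon integration controls $\int_0^{h_*}(u^{[1]}/u + \sigma)^2\,dt$ in terms of $N$, $u^{[1]}(y+h_*)/u(y+h_*)$, and $\int_0^{h_*}(\tau - E)\,dt$, is needed to close the estimate. For $C_3$ small depending only on the uniform norms of $\sigma, \tau$, this contradicts $u(y+h_*) = 1/2$.

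For \textbf{part (iv)}, invoke (iii) to obtain an interval $J \subset [y-\delta, y+\delta]$ of length $\delta = C_3/(1+|E|)$ on which $|u| \ge |u(y)|/2$; choosing $C_3 \le 1/4$ ensures $J \subset [y-\epsilon, y+\epsilon]$ whenever $\epsilon \ge 1/4$, so that $\int_{y-\epsilon}^{y+\epsilon}|u|^2\,dx \ge \delta |u(y)|^2/4$, which rearranges to $|u(y)|^2 \le (4/C_3)(1+|E|)\int_{y-\epsilon}^{y+\epsilon}|u|^2\,dx$, stronger than \eqref{comp2}. For \textbf{part (v)}, take $\lambda = \min(1, \epsilon/2) \ge 1/4$, apply (ii) on $[y, y+\lambda]$ or $[y-\lambda, y]$ to obtain $|u^{[1]}(y)| \le C_2 e^{\lambda|E|}\lambda^{-1}\max_{[y,y+\lambda]}|u|$, and bound the latter via (iv) at the maximizing point $z$ with window $\epsilon/2$, noting $[z-\epsilon/2, z+\epsilon/2] \subset [y-\epsilon, y+\epsilon]$; combining yields the claimed $C_5 e^{2\epsilon|E|}(1+|E|)^2$ bound.
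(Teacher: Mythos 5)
Parts (i), (ii), (iv) and (v) of your plan are sound: (i) is the same Gronwall argument as the paper's; your route to (ii) via the identity $\int_I u^{[1]}\,dt = u(b)-u(a)-\int_I\sigma u\,dt$, the mean value theorem and propagation by (i) is in fact a bit more direct than the paper's proof by contradiction; and your deductions of (iv) from (iii) and of (v) from (i), (ii), (iv) coincide with the paper's. The genuine gap is in (iii), which you yourself call the crux: the reduction (normalize $u(y)=1$, reflect so that $N=u^{[1]}(y)\ge 0$, let $h_*$ be the first time $u=1/2$) is fine and the statement you aim at is true, but the proposed mechanism --- ``a bootstrap exploiting $u\ge 1/2$ together with the Riccati identity'' --- is not shown to close, and as described it cannot. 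Writing $m=u^{[1]}/u$, the integrated Riccati identity is $\int_y^{y+h_*}(m+\sigma)^2\,dt = N - m(y+h_*) + \int_y^{y+h_*}(\tau-E)\,dt$, and the boundary term $-m(y+h_*)$ is precisely what you cannot control: at the crossing time $u$ is passing down through $1/2$, so $m(y+h_*)$ may be arbitrarily negative and the identity bounds $\int(m+\sigma)^2$ only by an unbounded quantity; the complementary lower bound $\int(m+\sigma)^2\ge(\log 2)^2/h_*$ coming from $\int(m+\sigma)\,dt=-\log 2$ points in the same direction, so no contradiction results. Likewise, the a priori bound $u\ge 1/2$ gives no \emph{upper} bound on $u$ on $[y,y+h_*]$, and it is exactly an upper bound on $u$ by its value at the launch point that is needed to beat the $\sqrt{1+N^2}$ losses you correctly identify.

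The missing idea is the one the paper uses: launch the estimate from a maximum point $s$ of $|u|$ on the relevant interval and use the integrating factor $h(x)=e^{\int_s^x\sigma\,dt}\,u^{[1]}(x)$, whose derivative $h'=e^{\int_s^x\sigma\,dt}(\tau-\sigma^2-E)u$ is controlled by $(C+|E|)\,u(s)$ because $|u|\le u(s)$ there; since only the one-sided bound $-u^{[1]}(x)\lesssim(C+|E|)u(s)$ is needed, the (possibly huge) value $u^{[1]}(s)$ enters only through its favorable sign and drops out, and then $u(x)\ge u(s)-|x-s|(C+|E|)e^{C|x-s|^{1/2}}u(s)-|x-s|^{1/2}\lVert\sigma\rVert_{2,\unif}u(s)$ contradicts a drop to $u(s)/2$ once $\delta(1+|E|)$ is small. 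If you insist on launching from $y$, you still need this device: when $\max_{[y,y+h_*]}u\ge 2$, relaunch from the interior maximum, to the right or left of it according to the sign of $u^{[1]}$ there (the left case is ruled out because it would force $u(y)>u(s)/2\ge 1$), while when $\max u<2$ the direct Volterra estimate from $y$ does close because $u$ is bounded by $2$ on the interval. Without some version of this relaunching/one-sided argument, part (iii) --- and with it (iv) and (v) --- remains unproved.
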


\begin{proof}
(i) From $\vec u ' = A \vec u$ for $x < y$ we obtain by Gronwall's inequality \cite[Lemma 1.3]{MR1004432}
\[
\lVert \vec u(y) \rVert \le e^{\int_x^y \lVert A(t,E) \rVert \,dt }  \lVert \vec u(x) \rVert.
\]
The operator norm bound
\begin{equation}\lb{conste1}
\|A(t,E)\|\leq 1 + 2|\sigma(t)| + |\tau(t)| + |\sigma^2(t)| + |E|
\end{equation}
implies that $\lVert A(t,E) \rVert$ is uniformly locally integrable: 
on every interval $I$ of length $|I| = \lambda \le 1$,
\[
\int_I \|A(t,E)\| dt \le 1 + 2 \lVert \sigma \rVert_{2,\unif} + \lVert \tau \rVert + \lVert \sigma \rVert_{2,\unif}^2 + \lambda |E|. 
\]
The case $y < x$ follows analogously.
	
(ii) We fix
\begin{equation}\lb{cnot}
S := \frac{ 1}{C_1 (3+ \lVert \sigma \rVert_{2,\unif}) } \lambda e^{-\lambda |E|}
\end{equation}
 and assume  that for some $y_0 \in I$,
\[
\lVert \vec u(y_0) \rVert \ge \lvert u^{[1]}(y_0) \rvert > \frac 1S \lvert u(x) \rvert  \qquad \forall x \in I.
\]
Combining, we conclude that for all $x,y \in I$,
\[
\lVert  \vec u(y) \rVert \ge \frac 1{C_1 e^{\lambda |E|}} \lVert \vec u(y_0) \rVert  > \frac 1{C_1 e^{\lambda |E|} S} \lvert u(x) \rvert.
\]
Since $C_1 e^{\lambda |E|} S < 1$, this implies
\[
\lvert u^{[1]}(y) \rvert > \left( \frac 1{C_1^2 e^{2\lambda |E|} S^2} - 1 \right)^{1/2} \lvert u(x) \rvert, \qquad \forall x,y\in I.
\]
In particular, $u^{\1}$ has no zeros on the interval $I$, so it has constant sign there. Thus,
		\begin{equation}\lb{ccn}
		 \left|\int_{I}u^{\1}(t)\,\dd t \right| = \int_{I} |u^{\1}(t)|\,\dd t > \lambda \left( \frac 1{C_1^2 e^{2\lambda |E|} S^2} - 1 \right)^{1/2} \max_{x \in I} \lvert u(x) \rvert.
		\end{equation}
		On the other hand, denoting the end points of $I$ by $j^-<j^+$, one has 
		\begin{align}\lb{signu}
		\begin{split}
		\left|\int_{I}u^{\1}(t)\,\dd t \right| & = \left| u(j^+) - u(j^-) - \int_{I}\sigma(t)u(t)\,\dd t \right| \\
		& \leq 2 \max_{x \in I} \lvert u(x) \rvert  + \sqrt{\lambda} \|\sigma\|_{2, \unif} \max_{x \in I} \lvert u(x) \rvert.
		\end{split}
		\end{align}
Since $u$ is not identically zero on $I$, combining \eqref{ccn} and \eqref{signu}, we obtain
		\begin{equation}
 \lambda \left( \frac 1{C_1^2 e^{2\lambda |E|} S^2} - 1 \right)^{1/2} < 2 + \|\sigma\|_{2,\unif}  \sqrt \lambda \le 2  + \|\sigma\|_{2,\unif} 
		\end{equation}
which implies
\[
\frac 1{C_1^2 e^{2\lambda |E|} S^2} < \frac {(2  + \|\sigma\|_{2,\unif})^2}{\lambda^2} + 1 \le \frac {(3  + \|\sigma\|_{2,\unif})^2}{\lambda^2} 
\]
and contradicts \eqref{cnot}.

(iii) Impose $C_3 \le 1$ to ensure $\delta \le 1$. 
Assume that the claim is false: then $u(y) \neq 0$ and by continuity there exist $x_1 \in [y-\delta, y] \cap [0,\infty)$ and $x_2 \in [y,y+\delta]$ such that $\lvert u(x_1) \rvert = \lvert u(x_2) \rvert = \lvert u(y) \rvert / 2$. In particular, $x_1 < y < x_2$. Pick $s \in [x_1, x_2]$ so that
\[
\lvert u(s) \rvert = \max_{x\in [x_1,x_2]} \lvert u(x) \rvert.
\]
By considering $\pm u$, without loss of generality we can assume $u(s) > 0$. 

Moreover, let us assume $u^{[1]}(s) \ge 0$ and work on the interval $[s,x_2]$; the other case is analogous by working on $[x_1,s]$.

The first step is an upper bound for the quasiderivative. For $x\in [s,x_2]$, denote
\[
h(x) = e^{\int_s^x \sigma(t) \,dt} u^{[1]}(x).
\]
Then the equation for $(u^{[1]})'$  implies
\[
h'(x) = e^{\int_s^x \sigma(t) \,dt} \left( \tau(x) - \sigma(x)^2 - E \right) u(x).
\]
Since $x - s \le  x_2 - s < 2\delta \le 1$, we use
\[
\left\lvert \int_s^x \sigma(t) \,dt \right\rvert \le \lvert x- s\rvert^{1/2} \lVert \sigma \rVert_{2,\unif}, \qquad \int_s^x \left\lvert \tau(t) - \sigma(t)^2 - E \right\rvert \dd t \le \lVert \sigma \rVert_{2,\unif}^2 + \lVert \tau\rVert + \lvert E \rvert
\]
to conclude that for $x\in [s, x_2]$, for some constant $C$,
\begin{align*}
| h(x)  - h(s)|  & \le \int_s^x e^{ C |t-s|^{1/2} } \lvert \tau(t) - \sigma(t)^2 - E \rvert \lvert u(t) \rvert \dd t \\
&  \le e^{ C |x-s|^{1/2} }  u(s) \int_s^x \lvert \tau(t) - \sigma(t)^2 - E \rvert  \dd t \\
&  \le ( C + \lvert E \rvert ) e^{ C |x-s|^{1/2} }  u(s).
\end{align*}
Since $h(s) = u^{[1]}(s) \ge 0$, we turn this into a one-sided bound 
\[
- h(x) \le - h(s) + ( C + \lvert E \rvert ) e^{ C |x-s|^{1/2} } u(s)  \le ( C + \lvert E \rvert ) e^{ C |x-s|^{1/2} } u(s) 
\]
and from this we finally obtain
\begin{equation}\label{quasiderivativeupperbound}
- u^{[1]}(x) \le ( C + \lvert E \rvert ) e^{2 C |x -s|^{1/2} } u(s), \qquad \forall x \in [s, x_2].
\end{equation}
Then we expand for $x \in [s,x_2]$,
\[
u(x) = u(s) + \int_s^x u'(t) \,dt = u(s) + \int_s^x u^{[1]}(t) \,dt + \int_s^x \sigma(t) u(t) \,dt 
\]
and by using \eqref{quasiderivativeupperbound} we get
\[
u(x) \ge  u(s) - |x-s| e^{2 C |x -s|^{1/2} } ( C + \lvert E \rvert ) u(s)    - |x-s| \lVert \sigma \rVert_{2,\unif} u(s).
\]
Plugging in $x = x_2$, recalling that $u(x_2) \le u(s)/ 2$ and $|x_2 - s| < 2\delta$, and dividing by $u(s)$ we obtain
\[
\frac 12 > 1 - 2 \delta e^{4 C \delta^{1/2} } ( C + \lvert E \rvert )   - \delta \lVert \sigma \rVert_{2,\unif}
\]
Equivalently,
\[
2 \delta e^{4 C \delta^{1/2} } ( C + \lvert E \rvert )  + \delta \lVert \sigma \rVert_{2,\unif} > \frac 12
\]
which gives a contradiction if $\delta$ is small enough.

(iv) It follows from (iii) that
\[
\int_{y-\epsilon}^{y+\epsilon} \lvert u(x) \rvert^2 \,dx \ge \frac {\lvert u(y) \rvert^2}4  \min \{ \epsilon, \delta\}. 
\]

(v) Without loss of generality assume $\epsilon \le 1$. Starting with (ii) and then (iv), with $a = \epsilon / 2$,
\[
\lvert u^{[1]}(y) \rvert^2 \le C_2^2 \frac {e^{4a|E|}}{(2a)^2} \max_{x \in [y-a,y+a]} \lvert u(x) \rvert^2  \le  C_2^2 C_4 \frac {e^{4a|E|}}{(2a)^2} (1+|E|)^2 \max_{x \in [y-a,y+a]} \int_{x-a}^{x+a} \lvert u(t) \rvert^2 \,dt
\]
which implies
\[
\lvert u^{[1]}(y) \rvert^2 \le C_5 e^{2\epsilon |E|} (1+|E|)^2  \int_{x-\epsilon}^{x+\epsilon} \lvert u(t) \rvert^2 \,dt. \qedhere
\]
\end{proof}

\begin{proof}[Proof of Theorem~\ref{theoremweightedL2eigenfunction}]
By considering $\Re u, \Im u$, it suffices to consider real-valued eigensolutions. Denote by $C$ the supremum in \eqref{goodweight}. By Lemma~\ref{lem2.6}, there exists $M$ such that
\begin{equation}\label{8jun1}
w(x) \lVert \vec u(x) \rVert^2 \le w(x) M \int_{x-1}^{x+1} u(y)^2 \,dy \le C M \int_{x-1}^{x+1} w(y) u(y)^2 \,dy.
\end{equation}
Integrating and using Tonelli's theorem gives
\begin{equation}\label{8jun2}
\int_1^l w(x) \lVert \vec u(x)\rVert^2 \dd x \le CM \int_1^l \int_{x-1}^{x+1} w(y) u(y)^2 \dd y \dd x \le 2 CM \int_0^{l+1} w(y) u(y)^2 \dd y.
\end{equation}
From now on assume $\int_0^\infty w(x) \lvert u(x)\rvert^2 \,dx < \infty$. Letting $l \to \infty$ in \eqref{8jun2} shows
\[
\int_1^\infty w(x) \lVert \vec u(x)\rVert^2 \dd x < \infty.
\]
By \eqref{goodweight}, $w$ is bounded on $(0,1)$, so $\int_0^1 w(x) u^{[1]}(x)^2 dx < \infty$. Using decaying tails of an integrable function, \eqref{8jun1} implies the pointwise decay \eqref{pointwisedecay}. 
\end{proof}

As a first application, we prove a Simon--Stolz  type criterion for absence of pure point spectrum, cf. \cite{MR1342046}.

\begin{lemma}\lb{lemmaSimonStolz}
Assume Hypothesis \ref{pot}. If for some $E \in \bbR$,
\begin{equation}\lb{infi}
	\int_0^{\infty} \frac{dx}{\|T(E, x)\|^2}=\infty,
\end{equation}
then $H^\alpha$ has no nontrivial solutions in $L^2((0,\infty))$; in particular, $H^\alpha$ doesn't have an eigenvalue at $E$ for any $\alpha \in [0,\pi)$.
\end{lemma}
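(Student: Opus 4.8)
The plan is to argue by contradiction: suppose there is a nontrivial solution $u\in L^2((0,\infty))$ of $\ell u=Eu$. We will show that then the full vector $\vec u$ lies in $L^2((0,\infty))$, and that $\int_0^\infty\|\vec u(x)\|^2\,dx$ is bounded below by a constant multiple of the divergent integral in \eqref{infi}, which is absurd. The passage from "no nontrivial $L^2$ solution" to "no eigenvalue of $H^\alpha$ at $E$, for any $\alpha$" is immediate, since an eigenfunction of $H^\alpha$ at $E$ is in particular a nontrivial element of $\mathfrak D\cap L^2(\bbR_+)$ solving $\ell u=Eu$.

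The first step is to upgrade $u\in L^2$ to $\vec u\in L^2$. This is the one place where the distributional setting genuinely matters: $u^{[1]}=u'-\sigma u$ is not $u'$, so the local domain is $V$-dependent and Sobolev embedding cannot be applied directly. Instead, Theorem~\ref{theoremweightedL2eigenfunction} with $w\equiv 1$ (equivalently, Lemma~\ref{lem2.6}(v) together with Tonelli's theorem to handle $\int_1^\infty|u^{[1]}|^2$, plus continuity of $u^{[1]}$ on $[0,1]$) gives $u^{[1]}\in L^2((0,\infty))$, hence
\[
\int_0^\infty\|\vec u(x)\|^2\,dx<\infty.
\]

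The second, decisive step is the transfer-matrix lower bound. We have $\vec u(x)=T(E;x)\vec u(0)$, and $\vec u(0)\neq 0$, since $\vec u(0)=0$ would force $u\equiv 0$ by uniqueness of the solution of the first-order system. Because $\det T(E;x)=1$, the adjugate identity $T(E;x)^{-1}=JT(E;x)^{\mathsf T}J^{-1}$ with $J=\left(\begin{smallmatrix}0&-1\\1&0\end{smallmatrix}\right)$ orthogonal shows $\|T(E;x)^{-1}\|=\|T(E;x)\|$; therefore $\|\vec u(0)\|=\|T(E;x)^{-1}\vec u(x)\|\le\|T(E;x)\|\,\|\vec u(x)\|$, i.e.
\[
\|\vec u(x)\|^2\ge\frac{\|\vec u(0)\|^2}{\|T(E;x)\|^2},\qquad x\ge 0.
\]
Integrating over $(0,\infty)$ and invoking \eqref{infi} yields $\infty>\int_0^\infty\|\vec u(x)\|^2\,dx\ge\|\vec u(0)\|^2\int_0^\infty\|T(E;x)\|^{-2}\,dx=\infty$, a contradiction.

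I expect no serious obstacle: the only nonroutine ingredient is promoting $u\in L^2$ to $\vec u\in L^2$, and that work is already contained in Theorem~\ref{theoremweightedL2eigenfunction}/Lemma~\ref{lem2.6}; the remainder is elementary linear algebra for $\mathrm{SL}(2)$ matrices.
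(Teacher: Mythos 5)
Your proof is correct and matches the paper's argument: it invokes Theorem~\ref{theoremweightedL2eigenfunction} with $w\equiv 1$ to promote $u\in L^2$ to $\vec u\in L^2$, then uses $\det T(E;x)=1$ to get $\|T(E;x)^{-1}\|=\|T(E;x)\|$ and the lower bound $\|\vec u(x)\|\ge\|\vec u(0)\|/\|T(E;x)\|$. The only difference is cosmetic (contradiction versus directly showing $\vec u\notin L^2$).
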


\begin{proof}
Fix nontrivial $u\in\mathfrak D$, $\ell u= Eu$. By Theorem~\ref{theoremweightedL2eigenfunction} with $w=1$, it suffices to show that
\begin{equation}\lb{ll2}
\sqrt{(u^{\1})^2+u^2}\not \in L^2(\bbR_+). 
\end{equation} 
Since $T(E,x) \in \SL$ implies $\|T(E,x)\|=\|T(E,x)^{-1}\|$, 
\begin{equation}
\|\vec{u}(x)\|\geq C\frac{\|\vec{u}(0)\|}{\|T(E,x)\|},\qquad \vec{u}(x)=(u^{\1}(x), u(x))^{\top},
\end{equation}
which together with \eqref{infi} yields \eqref{ll2} as required. 
\end{proof}

\subsection{The absolutely continuous spectrum via Last--Simon approach}\lb{sec2.3} The main goal of this section is to develop the Last--Simon approach, cf. \cite{MR1666767}, to absolutely continuous spectrum via growth of transfer matrices.  To do this, we first discuss the relation between the subordinacy theory and the growth of transfer matrices. We say that $u\in \mathfrak D$ is a subordinate solution of $\ell u-zu=0$ if for some solution $\ell v-zv=0, v\in\mathfrak D\setminus\{0\}$, 
\begin{equation}\lb{sabor}
\lim\limits_{x\rightarrow\infty}\frac{\|u\|_{x}}{\|v\|_{x}}=0,\qquad \|f\|^2_{x}: = \int_0^x|f(y)|^2\,\dd y.
\end{equation}
Note that if \eqref{sabor} holds for some eigensolution $v$, it holds for every eigensolution linearly independent with $u$. Moreover, taking $v = \ol u$, we see that if a subordinate solution exists, it must be linearly dependent with its complex conjugate, so it must be a multiple of $\phi_{\alpha,z}$ for some $\alpha$.

For $\mu$-a.e.\ $\lambda \in \bbR$, the normal boundary value $\lim_{\epsilon \downarrow 0} m(\lambda+i\epsilon)$ exists in $\ol{\bbC_+}$. Subordinacy theory relates this value to the existence of subordinate solutions \cite{MR915965,MR1738043}; this was recently understood to be a special case of bulk universality in a general Hamiltonian system setting \cite{EichingerLukicSimanek}. To explain this, incorporate the boundary condition into the transfer matrix by defining
\[
T_\alpha(z;x) = R_\alpha  \begin{pmatrix}  \phi_{\alpha,z}^{[1]}(0) & \theta_{\alpha,z}^{[1]}(0) \\ 
\phi_{\alpha,z}(0) & \theta_{\alpha,z}(0)
\end{pmatrix}.
\]
This transfer matrix $T_\alpha(z;x)$ obeys the initial value problem
\[
\partial_x T_\alpha(z;x) = R_\alpha  A(z,x) R_\alpha^{-1} T_\alpha(z;x), \qquad T_\alpha(z;0) = I.
\]
This is a special case of a so-called Hamiltonian system, and can be written as
\[
j \partial_x T_\alpha(z;x) = 
R_\alpha
\begin{pmatrix} 1 &\sigma(x) \\ \sigma(x) & \sigma(x)^2 - \tau(x)+ z \end{pmatrix} 
R_\alpha^{-1} T_\alpha(z;x), \qquad j = \begin{pmatrix} 0 & -1 \\ 1 & 0 \end{pmatrix}.
\]
The transfer matrices generate 
a matrix kernel
\begin{align*}
\cK_l( z, w) & = \int_0^l T_\alpha(w;x)^* R_\alpha \begin{pmatrix} 0 & 0 \\ 0 & 1 \end{pmatrix} R_\alpha^{-1} T_\alpha(z;x) \,dx  \\
& = \int_0^l  \begin{pmatrix}  \phi_{\alpha,z}(x) \ol{ \phi_{\alpha,w}(x)} & \theta_{\alpha,z}(x) \ol{ \phi_{\alpha,w}(x)} \\
\phi_{\alpha,z}(x) \ol{ \theta_{\alpha,w}(x)} & \theta_{\alpha,z}(x) \ol{ \theta_{\alpha,w}(x)}
\end{pmatrix} \,dx.
\end{align*}
By the Cauchy--Schwarz inequality, the solution $\phi_{\alpha,z}$ is subordinate if and only if
\[
\lim_{l\to\infty} \frac 1{\tr \cK_l(E,E)} \cK_l(E,E) = \begin{pmatrix} 0 & 0 \\ 0 & 1 \end{pmatrix}.
\]
Scaling limits of $\cK_l$ are related to the normal limits of $m$-function:  by \cite[Theorem~1.8]{EichingerLukicSimanek},
\[
\lim_{\epsilon\downarrow 0} m_\alpha(E+i\epsilon) = \infty \quad\iff\quad \lim_{l\to\infty} \frac 1{\tr \cK_l(E,E)} \cK_l(E,E) = \begin{pmatrix} 0 & 0 \\ 0 & 1 \end{pmatrix}.
\]
Using \eqref{rotationonmalpha} to restate in terms of $m_0$, we conclude: 
 
\begin{lemma}\lb{lemma.subordinacy1} Assume Hypothesis \ref{pot}. For any $E \in \bbR$,
\[
\lim_{\epsilon\downarrow 0} m_0(E+i\epsilon) = -\cot \alpha  \quad\iff\quad \phi_{\alpha,E}\text{ is subordinate}.
\]
\end{lemma}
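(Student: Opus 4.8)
The plan is to derive Lemma~\ref{lemma.subordinacy1} by translating the rotated-transfer-matrix statement already established in the excerpt back into the $\alpha=0$ normalization. The key is the two displayed equivalences immediately preceding the lemma: first, the subordinacy characterization of $\phi_{\alpha,z}$ in terms of the normalized matrix kernel $\cK_l(E,E)$ collapsing onto $\begin{psmallmatrix}0&0\\0&1\end{psmallmatrix}$; second, the Eichinger--Luki\'c--Simanek bulk-universality result \cite[Theorem~1.8]{EichingerLukicSimanek} which equates this collapse with $\lim_{\epsilon\downarrow0} m_\alpha(E+i\epsilon)=\infty$. Chaining these gives $\lim_{\epsilon\downarrow0} m_\alpha(E+i\epsilon)=\infty \iff \phi_{\alpha,E}$ is subordinate, so the whole content of the lemma is the bookkeeping identity $\lim_{\epsilon\downarrow0}m_\alpha(E+i\epsilon)=\infty \iff \lim_{\epsilon\downarrow0}m_0(E+i\epsilon)=-\cot\alpha$.

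First I would invoke \eqref{rotationonmalpha}, which says $\binom{m_\alpha(z)}{1}\simeq R_\alpha\binom{m_0(z)}{1}$ projectively; explicitly this means
\[
m_\alpha(z) = \frac{\cos\alpha\, m_0(z) - \sin\alpha}{\sin\alpha\, m_0(z) + \cos\alpha},
\]
i.e.\ $m_\alpha = \cM[R_\alpha](m_0)$ in the M\"obius-transformation notation of the excerpt. Since $\cM[R_\alpha]$ is a homeomorphism of $\hat{\bbC}$, and normal boundary values exist for $m_0$ (hence for $m_\alpha$) for a.e.\ $E$, taking $z=E+i\epsilon$ and letting $\epsilon\downarrow0$ we get $\lim_\epsilon m_\alpha(E+i\epsilon) = \cM[R_\alpha]\big(\lim_\epsilon m_0(E+i\epsilon)\big)$ whenever the inner limit exists in $\hat{\bbC}$. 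Now $\cM[R_\alpha](w)=\infty$ precisely when the denominator $\sin\alpha\, w + \cos\alpha$ vanishes, i.e.\ when $w = -\cot\alpha$ (for $\alpha\in(0,\pi)$; for $\alpha=0$ the transformation is the identity and $m_\alpha=m_0$, with the convention $-\cot 0 = \infty$, so the equivalence still reads correctly). This yields exactly $\lim_\epsilon m_\alpha(E+i\epsilon)=\infty \iff \lim_\epsilon m_0(E+i\epsilon)=-\cot\alpha$, completing the chain of equivalences.

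There is essentially no hard analytic step here — the eigensolution estimates, Weyl-disk machinery, and the cited universality theorem do all the heavy lifting upstream. The only point requiring a little care is the boundary-value bookkeeping: one must note that the relevant limits are taken along the vertical approach $z=E+i\epsilon$, that $m_0$ has a finite-or-infinite normal limit for a.e.\ $E$ (from the Herglotz property in Proposition~\ref{prop2.5}(iv) together with the standard Fatou-type theorem for Herglotz functions), and that continuity of the M\"obius map on $\hat{\bbC}$ transports this limit; also that the edge case $\alpha=0$ should be handled separately (or absorbed via the convention $\cot 0 = \infty$) since then no rotation occurs. I would therefore write the proof as: cite the two preceding displays to obtain $\lim_\epsilon m_\alpha(E+i\epsilon)=\infty\iff\phi_{\alpha,E}$ subordinate, then invoke \eqref{rotationonmalpha} and the M\"obius interpretation to rewrite the left side in terms of $m_0$, noting that $\cM[R_\alpha]$ sends $-\cot\alpha$ to $\infty$. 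The main (minor) obstacle is simply making the a.e.-existence-of-normal-limits hypothesis explicit so that the composition of limits is justified.
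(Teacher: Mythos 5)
Your proposal is correct and follows the paper's own approach exactly: the paper proves the lemma simply by chaining the two displayed equivalences immediately preceding it (Cauchy--Schwarz gives ``$\phi_{\alpha,E}$ subordinate $\iff$ the normalized kernel $\cK_l(E,E)/\tr\cK_l(E,E)$ collapses to $\begin{psmallmatrix}0&0\\0&1\end{psmallmatrix}$,'' and \cite[Theorem~1.8]{EichingerLukicSimanek} gives ``collapse $\iff \lim_{\epsilon\downarrow0}m_\alpha(E+i\epsilon)=\infty$'') and then using \eqref{rotationonmalpha} to rewrite the $m_\alpha$ condition as $\lim_{\epsilon\downarrow0}m_0(E+i\epsilon)=-\cot\alpha$, which is precisely your argument. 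One small clarification: your concern about a.e.\ existence of normal boundary values is superfluous here --- since $\cM[R_\alpha]$ is a homeomorphism of $\hat{\bbC}$, the limit $\lim_{\epsilon\downarrow0}m_\alpha(E+i\epsilon)$ exists in $\hat{\bbC}$ precisely when $\lim_{\epsilon\downarrow0}m_0(E+i\epsilon)$ does (and equals $\infty$ iff the latter equals $-\cot\alpha$), so the equivalence is genuinely pointwise in $E$, matching the lemma's ``for any $E\in\bbR$.''
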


We also denote
\begin{equation}\lb{nset}
N(\ell):= \{E\in\bbR: \text{\ no solution of\ }\ell u-Eu=0\text{\ is subordinate}\}. 
\end{equation} 
Taking the union over $\alpha$ in Lemma~\ref{lemma.subordinacy1} and taking negations, for every $E$ for which the normal limit exists, $E \in N(\ell)$ if and only if
\[
\lim_{\epsilon\downarrow 0} m_0(E+i\epsilon)  \in \bbC_+.
\]
Recall that we denote by $\mu^\alpha_\ac$ the absolutely continuous part of the spectral measure $\mu^\alpha$.

\begin{lemma}\lb{thm2.10new} Assume Hypothesis \ref{pot}. For arbitrary $\alpha\in[0,\pi)$, $N(\ell)$ is an essential support for the absolutely continuous spectrum of $H^\alpha$ in the sense that $\mu^\alpha_\ac$ is mutually absolutely continuous with $\chi_{N(\ell)}(E) \,dE$. In particular,
	\begin{align}
	&\spec_{\ac}(H^{\alpha})=\overline{N(\ell)}^{\ess}. \lb{acclaim}
	\end{align}
\end{lemma}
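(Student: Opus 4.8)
The plan is to deduce Lemma~\ref{thm2.10new} from the general theory of rank-one perturbations and the subordinacy characterization in Lemma~\ref{lemma.subordinacy1}, following the standard Last--Simon / Gilbert--Pearson machinery adapted to the distributional setting. The key reduction is that, since $H^\alpha$ is unitarily equivalent to multiplication by the independent variable on $L^2(\bbR,\mu^\alpha)$ with $\mu^\alpha$ arising from the Herglotz function $m_\alpha$ via \eqref{malp}, the absolutely continuous part $\mu^\alpha_\ac$ is governed entirely by the a.e.-defined boundary values $\lim_{\epsilon\downarrow 0}\Im m_\alpha(E+i\epsilon)$: by the de la Vall\'ee Poussin theorem (or the standard Herglotz representation theory, cf.\ \cite{MR915965,MR1738043}), $\mu^\alpha_\ac$ is mutually absolutely continuous with $\chi_{\{E:\, 0<\Im m_\alpha(E+i0)<\infty\}}(E)\,dE$, and moreover $\Im m_\alpha(E+i0)<\infty$ for Lebesgue-a.e.\ $E$ since $m_\alpha$ is Herglotz. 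Hence $\mu^\alpha_\ac$ is mutually a.c.\ with $\chi_{\{E:\, \Im m_\alpha(E+i0)>0\}}(E)\,dE$.

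Next I would identify the set $\{E : \Im m_\alpha(E+i0)>0\}$ with $N(\ell)$ up to a Lebesgue-null set. For Lebesgue-a.e.\ $E$ the normal limit $m_0(E+i0)$ exists in $\overline{\bbC_+}$, and by \eqref{rotationonmalpha} the same is true for $m_\alpha(E+i0)$, with the two related by the M\"obius transformation associated to $R_\alpha$; in particular $\Im m_\alpha(E+i0)>0$ iff $m_\alpha(E+i0)\in\bbC_+$ iff $m_0(E+i0)\in\bbC_+$ (the rotation $R_\alpha$ maps $\overline{\bbC_+}$ to itself and its boundary to its boundary, so it preserves the distinction ``interior vs.\ boundary of $\bbC_+$''). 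By Lemma~\ref{lemma.subordinacy1}, for such $E$ one has $m_0(E+i0)=-\cot\beta$ for some $\beta\in[0,\pi)$ precisely when $\phi_{\beta,E}$ is subordinate, i.e.\ $m_0(E+i0)\in\bbR\cup\{\infty\}$ exactly when some $\phi_{\beta,E}$ is subordinate; taking negations, $m_0(E+i0)\in\bbC_+$ exactly when no $\phi_{\beta,E}$ — hence (by the remark following \eqref{sabor}, every subordinate solution is a multiple of some $\phi_{\beta,E}$) no solution — is subordinate, which is precisely the definition \eqref{nset} of $N(\ell)$. Combining the two mutual-absolute-continuity statements gives that $\mu^\alpha_\ac$ is mutually a.c.\ with $\chi_{N(\ell)}(E)\,dE$.

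Finally, \eqref{acclaim} follows from the general fact that $\spec_{\ac}(H^\alpha)=\overline{\supp(\mu^\alpha_\ac)}$ together with the observation that, for any measure mutually a.c.\ with $\chi_S(E)\,dE$, the closure of its support equals the essential closure $\overline{S}^{\ess}$ as defined in the introduction; applying this with $S=N(\ell)$ yields the claim.

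The main obstacle I anticipate is making the passage between $m_\alpha$ and subordinate solutions fully rigorous in this $H^{-1}$ setting: one must confirm that the subordinacy theory input (Lemma~\ref{lemma.subordinacy1}, which relies on \cite[Theorem~1.8]{EichingerLukicSimanek} through the Hamiltonian-system reformulation) genuinely covers all $\alpha\in[0,\pi)$ simultaneously and that the ``no solution is subordinate'' condition is insensitive to the boundary parameter — this is exactly what lets $N(\ell)$ serve as a \emph{common} essential support for all $H^\alpha$. The other point requiring care is that the exceptional Lebesgue-null sets (where normal limits fail to exist, or where $\Im m_\alpha(E+i0)=\infty$) do not affect mutual absolute continuity; this is routine but should be stated explicitly.
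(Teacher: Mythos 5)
Your proposal is correct and follows essentially the same route as the paper: start from the Herglotz/de la Vall\'ee Poussin characterization of an essential support in terms of boundary values of $m_\alpha$, pass to normal limits (which exist a.e.) to replace $\limsup$ conditions by $\lim_{\epsilon\downarrow 0} m_\alpha(E+i\epsilon)\in\bbC_+$, use \eqref{rotationonmalpha} to show this set is $\alpha$-independent, and then invoke Lemma~\ref{lemma.subordinacy1} (negated and unioned over the boundary parameter) to identify it with $N(\ell)$. The paper cites \cite[Corollary~9.4]{EGNT13} for the first step where you invoke de la Vall\'ee Poussin directly, but the content is the same, and your final concern about whether the subordinacy input covers all $\alpha$ is exactly what the rotation formula \eqref{rotationonmalpha} and the $m_0$-based form of Lemma~\ref{lemma.subordinacy1} are designed to handle.
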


\begin{proof}
Recall from \cite[Corollary 9.4]{EGNT13} that an essential support for $\mu^\alpha_\ac$ is the set
\begin{equation}
M_{\ac}:=\{E\in \bbR \mid 0<\limsup_{\epsilon\downarrow 0} \Im m_{\alpha}(\lambda+\bfi\varepsilon)<\infty\}. 
\end{equation}
Since $m_\alpha$ has a normal boundary value in $\ol{\bbC_+}$ for Lebesgue-a.e.\ $E$ (see e.g., \cite[Theorem 3.27, Corollary 3.29]{Tes14}), the set
\[
\{E\in \bbR \mid  \lim_{\epsilon\downarrow 0} m_{\alpha}(\lambda+\bfi\varepsilon) \in \bbC_+ \}
\]
is also an essential support for the a.c.\ spectrum. This set is independent of $\alpha$ by \eqref{rotationonmalpha}. By the observation proceeding the Lemma, the set $N(\ell)$ is another essential support for the a.c.\ spectrum of $H^\alpha$.
\end{proof}

\begin{proof}[Proof of Theorem~\ref{thm2.10}]
Since the spectral type of the a.c.\ part is independent of $\alpha$ (Lemma~\ref{thm2.10new}),  it suffices to prove the claim for $\alpha=0$. Assuming this value, we drop symbol $\alpha$ from subsequent notation.  

Due to preservation of Wronskian we have $\|\vec{\phi}_{\alpha, E}(x)\|\|\vec{\theta}_{\alpha, E}(x)\|\geq 1$; thus,
	\begin{equation}\lb{wronskint}
	(l-1)^2\leq {\|\vec{\phi}_{\alpha, E}\|^2_{L^2((1, l))}}{\|\vec{\theta}_{\alpha, E}\|^2_{L^2((1, l))}},\qquad l>1.
	\end{equation}
	Then, one has
	\begin{align}
	\frac{\|{\phi}_{\alpha, E}\|^2_{L^2((0, l+1))}}{\|{\theta}_{\alpha, E}\|^2_{L^2((0, l+1))}}&\underset{\eqref{derest1}}{\leq} C \frac{\|\vec{\phi}_{\alpha, E}\|^2_{L^2((0, l+1))}}{\|\vec{\theta}_{\alpha, E}\|^2_{L^2((1, l))}}\underset{\eqref{wronskint}}{\leq} C \frac{\|\vec{\phi}_{\alpha, E}\|^4_{L^2((0, l+1))}}{(l-1)^{2}}\leq C \left(\frac{\int_{0}^{l+1}\|T(E;x)\|^2dx}{l-1}\right)^2,
	\end{align}
where in the last step, we used  $\|\vec{\phi}_{\alpha, E}(x)\|\leq \|T(E,x)\|$. If the solution $\phi_{\alpha,E}$ is subordinate, taking the limit $l\to\infty$ shows
\[
\lim\limits_{l\rightarrow\infty}\frac{1}{l}\int_{0}^{l}\|T(E;x)\|^2dx=\infty.
\]
In other words, for the set $\Sigma_{\ac}$ defined by \eqref{sigmaac}, we conclude $\Sigma_{\ac}\subset N(\ell)$.

Therefore, to complete the proof, it is enough to show that
\begin{equation}
\liminf_{l\rightarrow\infty}\frac{1}{l}\int_{0}^{l}\|T(E;x)\|^2dx<\infty,\qquad \text{ for }\,\mu_{\ac}\text{-a.e. } E.
\end{equation}
To that end, let us fix $\gamma > 1$ and introduce the measure 
\begin{equation}\lb{fincl}
d\rho:=\frac{\min \{\mu^0, \mu ^{\frac \pi 2}\}}{e^{2\gamma |E|}}.
\end{equation}
Since $d\rho$ is equivalent to $\mu_{\ac}$, in order to prove that $\Sigma_\ac$ is an essential support for $\mu_{\ac}$, it is enough to show
\begin{equation}\lb{mainthm2.8}
\int_{\bbR}\left( \liminf_{l\rightarrow\infty}\frac{1}{l}\int_{0}^{l}\|T(E;x)\|^2dx\right)d\rho(E)<\infty.
\end{equation}
To that end, we will prove the following auxiliary inequalities: there exists   $\Upsilon>0$ such that for all $x\in(2,\infty)$,
\begin{align}
&\int_{\bbR} \frac{\int_{x-1}^{x+1}|\phi_E(t)|^2dt}{e^{\gamma |E|}}d\mu^0(E)<\Upsilon,\qquad \int_{\bbR} \frac{\int_{x-1}^{x+1}|\theta_E(t)|^2dt}{e^{\gamma |E|}}d\mu^{\frac \pi 2}(E)<\Upsilon,\lb{es1}\\
&\int_{\bbR} \frac{\int_{x-1}^{x+1}|\phi_E^{\1}(t)|^2dt}{e^{2\gamma |E|}}d\mu^0(E)<\Upsilon,\qquad \int_{\bbR} \frac{\int_{x-1}^{x+1}|\theta^{\1}_E(t)|^2dt}{e^{2\gamma |E|}}d\mu^{\frac\pi 2}(E)<\Upsilon.\lb{es2}
\end{align} 
We will prove the first parts of \eqref{es1}, \eqref{es2}, the second parts can be proved analogously. Since $\supp\mu^{0}$ is bounded from below, for some $\Lambda < \min \supp \mu^{0}$,
\begin{align}
\begin{split}
\int_{\bbR} \frac{|\phi_E(x)|^2}{e^{\gamma |E|}}d\mu^0(E)&\leq c \int_{\bbR} \frac{|\phi_E(x)|^2}{E-\Lambda}d\mu^0(E).
\end{split}
\end{align}
Then, using spectral representation of Green's function \cite[Lemma 9.6]{EGNT13} and the last part of Theorem \ref{thmoper}, we obtain
\begin{align}
\begin{split}
\int_{\bbR} \frac{|\phi_E(x)|^2}{e^{\gamma |E|}}d\mu^0(E)&\,\,\leq\,\, C \int_{\bbR} \frac{|\phi_E(x)|^2}{E-\Lambda}d\mu^0(E)\\
\quad=G(\Lambda; x,x)&\underset{\eqref{resineq}}{\leq} C G^{free}(\lambda(\sigma, \tau)-\Lambda; x,x)\leq \alpha(1-e^{-\beta|x|})\lb{es1.1},
\end{split}
\end{align}
where $\lambda(\sigma, \tau)$ is as in \eqref{resineq}, $G$ and $G^{free}$ denote respectively the Green's functions for $H^0$ and the free Dirichlet Laplacian on $\bbR_+$, i.e., for $\sigma=\tau=0$, and the constants $\alpha, \beta$ depend only on $\Lambda, \sigma, \tau$. Integrating \eqref{es1.1} yields the first inequality in \eqref{es1}.

Next, we switch to the first inequality in \eqref{es2}. By Lemma \ref{lem2.6},
\begin{equation}
|\phi_E^{\1}(t)|^{2}\leq C(E)\int_{t-1/2}^{t+1/2}|\phi_E(y)|^2dy,
\end{equation}
with $C(E)=\cO(e^{\gamma|E|}), E\rightarrow\infty$. Then, one has
\begin{equation}
\int_{\bbR} \frac{\int_{x-1}^{x+1}|\phi_E^{\1}(t)|^2dt}{e^{2\gamma |E|}}d\mu^0(E)\leq \int_{\bbR} C(E)\frac{\int_{x-3/2}^{x+3/2}|\phi_E(t)|^2dt}{e^{2\gamma |E|}}d\mu^0(E) < \Upsilon,
\end{equation}
where in the last step we used  \eqref{es1.1}. Next, \eqref{es1} and \eqref{es2} together yield a constant $C>0$ such that for all $x\in(2,\infty)$,
\begin{equation}\lb{axtest1}
\int_{\bbR}\int_{x-1}^{x+1}\|T(E;t)\|^2dt\,d\rho(E)<C.
\end{equation}
Splitting the interval $(0,l)$ into disjoint intervals of length 2, averaging over $l$, and applying Fatou's lemma gives
\[
\int_{\bbR} \liminf_{l \to \infty} \frac 1l \int_{0}^{l}\|T(E;t)\|^2dt\,d\rho(E) \le C
\]
which implies \eqref{mainthm2.8}.
\end{proof}

\begin{proof}[Proof of Theorem \ref{cor2.11}]
Using the estimate \eqref{axtest1} together with $\|T(E;t,s)\|\leq \|T(E;s)\| \|T(E;t)\|$ and the Cauchy--Schwarz inequality in $L^2(\bbR, d\rho)$ gives
\begin{equation}
\sup\limits_{x,y\in(2,\infty)}\int_{\bbR}\int_{x-1}^{x+1}\int_{y-1}^{y+1}\|T(E;t,s)\|^2dt\,ds\,d\rho(E)<\infty,
\end{equation}
which implies by Fatou's lemma that for $\rho$-a.e.\ $E$,
\[
\liminf_{j\to\infty} \int_{x_j-1}^{x_j+1}\int_{y_j-1}^{y_j+1}\|T(E;t,s)\|^2dt\,ds < \infty.
\]
By Lemma \ref{lem2.6}, for any $E$ there exists $C>0$ such that 
\begin{equation}
C^{-1} \int\limits_{x_j-1}^{x_j+1}\ \int\limits_{y_j-1}^{y_j+1}\|T(E;x,y)\|^2dxdy\leq \|T(E;x_j, y_j)\|^2\leq C \int\limits_{x_j-1}^{x_j+1}\ \int\limits_{y_j-1}^{y_j+1}\|T(E;x,y)\|^2dxdy,
\end{equation}
so for $\rho$-a.e.\ $E$,
\[
\liminf_{j\to\infty} \lVert T(E;x_j,y_j) \rVert < \infty. \qedhere
\]
\end{proof}

Theorem~\ref{cor2.11} will be our principal tool for showing the absence of absolutely continuous spectrum for a class of slowly decaying potentials, see Theorem \ref{sparseAltnewtheorem}(b).

\subsection{Carmona formula and pure a.c.\ spectrum on intervals}\lb{Carm}
In this section, we discuss a Carmona-type, cf. \cite{MR701057}, approximation result for the spectral measure of $H^{\alpha}$ and use it to derive a criterion for pure a.c. spectrum on an interval. This is our main tool for showing purely absolutely continuous spectrum for a class of slowly decaying potentials, see Theorem \ref{sparseAltnewtheorem}(a). 

\begin{theorem}\lb{carmona}
Assume Hypothesis \ref{pot}. For any $\alpha\in [0,\pi)$,
the measures
\begin{equation}\lb{meshr}
\dd\mu_x^{\alpha}(E) = \frac{1}{\pi ( \phi_{\alpha,E}(x)^2 + \phi^{[1]}_{\alpha,E}(x)^2 ) }\,\dd E,\qquad x> 0,
\end{equation}
converge vaguely to the spectral measure $\mu^{\alpha}$ of $H^{\alpha}$ as $x\to \infty$ in the sense that
\begin{equation}\lb{car}
\lim_{x\to\infty}\int\limits_{\bbR} h(E)\,\dd\mu_x^{\alpha}(E) = \int\limits_{\bbR}  h(E)\,\dd\mu^{\alpha}(E),\qquad \forall h\in C_0(\bbR). 
\end{equation}
\end{theorem}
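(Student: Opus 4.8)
The plan is to realize each $\mu_x^\alpha$ as the spectral measure of a half-line operator on $(0,x)$ with a Dirichlet-type boundary condition at $x$, and then pass to the limit using a resolvent comparison. Concretely, for fixed $x>0$, let $H^{\alpha}_x$ denote the restriction of $\ell$ to $L^2((0,x))$ with the boundary condition $u(0)\cos\alpha + u^{[1]}(0)\sin\alpha = 0$ at $0$ and the condition $u(x)=0$ at the (regular) right endpoint. One checks by the Weyl--Titchmarsh construction on $(0,x)$ (using Proposition~\ref{prop2.5} machinery, now with a genuine second regular endpoint) that the spectral measure of $H^\alpha_x$ with respect to the cyclic vector coming from $\phi_{\alpha,\cdot}$ is exactly a discrete measure supported on the zeros of $E\mapsto\phi_{\alpha,E}(x)$, with the familiar weights $\bigl(\phi_{\alpha,E}(x)^2+\phi^{[1]}_{\alpha,E}(x)^2\bigr)^{-1}$ up to the $1/\pi$ normalization; this identifies $\mu^\alpha_x$ as (a version of) the canonical spectral measure of $H^\alpha_x$. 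Equivalently, $\mu^\alpha_x$ arises from the Herglotz function $m^x_\alpha(z) = -\phi^{[1]}_{\alpha,z}(x)/\phi_{\alpha,z}(x)$ via the Stieltjes inversion; note $m^x_\alpha \to m_\alpha$ locally uniformly on $\bbC_+$ because the Dirichlet-truncated $m$-functions are exactly the centers-to-boundary points of the nested Weyl disks $D^\alpha_x(z)$, which shrink to $\{m_\alpha(z)\}$ by Proposition~\ref{prop2.5}(iii).

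From the convergence $m^x_\alpha(z)\to m_\alpha(z)$ for each $z\in\bbC_+$, one gets vague convergence of the associated measures on $\bbR$ by a standard Herglotz argument: the Poisson integrals $\int \Im(z)\,|E-z|^{-2}\,d\mu^\alpha_x(E) = \Im m^x_\alpha(z)$ converge, which by the Helly selection principle plus uniqueness of the Herglotz representation forces $\mu^\alpha_x \to \mu^\alpha$ vaguely, i.e.\ against all $h\in C_0(\bbR)$. The only subtlety here is a uniform-tail / tightness control to rule out escape of mass to $\infty$; this is supplied by the uniform lower bound $\min\supp\mu^0 > -\infty$ together with the resolvent estimate \eqref{resineq} of Theorem~\ref{thmoper}, exactly as these were used in the proof of Theorem~\ref{thm2.10} to bound $\int |\phi_E(x)|^2 e^{-\gamma|E|}\,d\mu^0$ — the same Green's-function domination gives the needed uniform bound $\sup_x \mu^\alpha_x(\bbR\setminus[-R,R]) \to 0$ as $R\to\infty$.

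I expect the main obstacle to be the first step: rigorously establishing that $\mu^\alpha_x$ is the spectral measure of the truncated operator $H^\alpha_x$ (or at least that it is generated by $m^x_\alpha$) in the distributional-potential setting, since one must redo the Weyl--Titchmarsh and eigenfunction-expansion theory of Section~2 on a finite interval with two regular endpoints and verify the Stieltjes inversion formula produces precisely the point masses at zeros of $\phi_{\alpha,\cdot}(x)$ with the stated weights. This is where one leans on \cite{EGNT13}; alternatively one can bypass operator-theoretic language entirely and argue purely at the level of Herglotz functions: define $m^x_\alpha(z) = -\phi^{[1]}_{\alpha,z}(x)/\phi_{\alpha,z}(x)$, verify directly that it is Herglotz (it is a limit point of the Weyl disk $D^\alpha_x(z)\subset\bbC_+$, namely the point corresponding to $v=\theta_{\alpha,z}$ with $v(x)=$ forced), check that its Stieltjes measure is $\mu^\alpha_x$ by a residue computation at the simple zeros of $\phi_{\alpha,z}(x)$, and then run the convergence argument above. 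Either way, the remaining steps — pointwise convergence of $m$-functions from nesting, tightness from \eqref{resineq}, and vague convergence from the Herglotz representation — are routine given what is already proved in the excerpt.
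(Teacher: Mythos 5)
The paper's proof is entirely at the level of Herglotz functions, and your second paragraph essentially reconstructs that framework; but your identification of the correct Herglotz function is wrong in a way that breaks the whole argument. The measure
\[
\dd\mu_x^{\alpha}(E) = \frac{1}{\pi\big( \phi_{\alpha,E}(x)^2 + \phi^{[1]}_{\alpha,E}(x)^2 \big)}\,\dd E
\]
is \emph{absolutely continuous} with a strictly positive, continuous density. It therefore cannot be the spectral measure of the Dirichlet truncation $H^\alpha_x$ (which has purely discrete spectrum on a finite interval), and it cannot be the Stieltjes measure of $m^x_\alpha(z) := -\phi^{[1]}_{\alpha,z}(x)/\phi_{\alpha,z}(x)$ either: since $\phi_{\alpha,E}$ and $\phi^{[1]}_{\alpha,E}$ are real-valued for real $E$, the normal boundary value of this $m^x_\alpha$ is real almost everywhere, so its Stieltjes measure is purely singular (a sum of point masses at the zeros of $\phi_{\alpha,\cdot}(x)$). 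You have effectively conflated the classical finite-volume approximation by truncated operators with Carmona's formula; these are distinct approximation schemes, and Carmona's is the a.c.\ one. (As an aside, the truncated-Dirichlet $m$-function is $-\theta_{\alpha,z}(x)/\phi_{\alpha,z}(x)$ rather than $-\phi^{[1]}_{\alpha,z}(x)/\phi_{\alpha,z}(x)$; but this is secondary to the main point.)

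The correct choice, and the one used in the paper, is the Herglotz function associated with a \emph{dissipative} boundary condition at $x$: define $m_{x,\alpha}(z)$ by
\[
\begin{bmatrix}
m_{x,\alpha}(z) \\ 1
\end{bmatrix} \simeq R_{\alpha}\,T(z;x)^{-1}\begin{bmatrix}
\bfi \\ 1
\end{bmatrix},
\]
i.e.\ the M\"obius image of $\bfi$ under $\cM[R_\alpha T(z;x)^{-1}]$, which corresponds to imposing $u^{[1]}(x) = \bfi\, u(x)$. Since $\bfi \in \bbC_+$, this point lies in the interior of the Weyl disk $D_x^\alpha(z) \subset \bbC_+$ for every $z\in\bbC_+$, so $m_{x,\alpha}$ is Herglotz. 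Because the disks are strictly nested and shrink to $\{m_\alpha(z)\}$ (Proposition~\ref{prop2.5}), $m_{x,\alpha}(z)\to m_\alpha(z)$ pointwise as $x\to\infty$. The crucial calculation — and the content your proposal is missing — is that the boundary value $\lim_{\varepsilon\downarrow 0}\Im\,m_{x,\alpha}(E+\bfi\varepsilon)$ exists for every real $E$ (the denominator is a quadratic form in $(\cos\alpha,-\sin\alpha)$ with respect to the invertible matrix of solutions, hence never vanishes) and equals exactly
\[
\frac{1}{\phi_{\alpha,E}(x)^2 + \phi^{[1]}_{\alpha,E}(x)^2},
\]
where the numerator reduces to the Wronskian $W(\theta_{\alpha,E},\phi_{\alpha,E})=1$. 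Once this identification is in place, the vague convergence of the associated measures follows from the pointwise convergence of Herglotz functions, exactly as you describe in your last paragraph; no separate tightness argument via \eqref{resineq} is needed for convergence against compactly supported test functions.
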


\begin{proof}
Recall $R_{\alpha}$ from \eqref{ralpha}. For $z\in\bbC_+$ and $x>0$, let us define $m_{x,\alpha}(z)\in\bbC$ via 
	\begin{equation}\lb{mxa}
	\begin{bmatrix}
	m_{x,\alpha}(z) \\ 1
	\end{bmatrix} \simeq R_{\alpha}T(z;x)^{-1}\begin{bmatrix}
	\bfi \\1
	\end{bmatrix}.
	\end{equation} 
	In other words, $m_{x,\alpha}(z)$ is the image of $\bfi$ under the M\"obius transform $\cM[R_{\alpha}T(z;x,0)^{-1}]$. By Proposition \ref{prop2.5}, $\bfi \in \mathbb{C}_+$ implies $m_{x,\alpha}(z)\in D_x^{\alpha}(z)\subset \bbC_+$, and thus the function $z\mapsto m_{x,\alpha}(z)$ is Herglotz; moreover, since the disks $D_x^{\alpha}(z)$ shrink to a single point, for every $z\in\bbC_+$, one has $m_{x,\alpha}(z)\to m_{\alpha}(z)$ as $x\rightarrow\infty$. Our next objective is to compute boundary value of $\Im m_{x,\alpha}(E+\bfi \varepsilon)$ as $\varepsilon\downarrow 0$. Put
	\begin{align}
	&P(z, x):=\cos(\alpha) (\bfi\theta_{\alpha, z}(x)-\theta_{\alpha, z}^{\1}(x))+\sin(\alpha) (\bfi \phi_{\alpha, z}(x) - \phi_{\alpha, z}^{\1}(x)),\\
	&Q(z,x):=\sin(\alpha)(\bfi\theta_{\alpha, z}(x)-\theta_{\alpha, z}^{\1}(x)) + \cos(\alpha) (-\bfi\phi_{\alpha, z}(x)  + \phi_{\alpha, z}^{\1}(x)),
	\end{align}
	and rewrite \eqref{mxa} as 
	\begin{align}
	m_{x,\alpha}(z) & =\frac{P(z, x)}{Q(z, x)}=\frac{P(z, x)\overline{Q(z, x)}}{|Q(z, x)|^2}.
	\end{align}
	Note that both the denominator and the numerator are entire functions of $z$. Moreover, we claim that $|Q(z, x)|^2$ does not vanish for all $z\in \bbC_+\cup \bbR$ and $x>0$. Since $m_{x,\alpha}\in \bbC_+$ whenever $z\in \bbC_+$, it suffices to check the claim for $z\in \bbR$. Suppose for some $x>0$,
	\begin{equation}
	0=|Q(z, x)|^2= |(-\sin(\alpha)\theta^{\1}_{\alpha, z}(x) + \cos(\alpha)\phi^{\1}_{\alpha, z}(x)) + \bfi(\sin(\alpha)\theta_{\alpha, z}(x) -\cos(\alpha)\phi_{\alpha, z}(x))|^2.
	\end{equation}
Since $\phi_{\alpha, z}, \theta_{\alpha, z}, \phi_{\alpha, z}^{\1}, \theta_{\alpha, z}^{\1}\in\bbR$ for $z\in\bbR$, $|Q(z, x)|^2=0$ implies $\Re Q(z,x) = \Im Q(z,x) =0$. Writing this in matrix form gives the system
\[
\begin{bmatrix}
\phi^{\1}_{\alpha, z}(x) & \theta^{\1}_{\alpha, z}(x) \\
\phi_{\alpha, z}(x) & \theta_{\alpha, z}(x)
\end{bmatrix}
\begin{bmatrix}
\cos \alpha \\
- \sin \alpha
\end{bmatrix}
=
\begin{bmatrix}
0 \\
0
\end{bmatrix}
,
\]
which is a contradiction since the matrix is invertible. Thus, $Q(z,x) \neq 0$ for $z \in \bbR$ and $m_{x,\alpha}(z)$ has a continuous extension to $\bbR$. To summarize,
	\begin{equation}\lb{imlim}
	\lim\limits_{\varepsilon\downarrow 0}\Im m_{x, \alpha}(E+\bfi \varepsilon)= \frac{\Im[P(E, x)\overline{Q(E, x)}]}{|Q(E, x)|^2},
	\end{equation}
	where
	\begin{align}\lb{imlim2}
	\begin{split}
	\Im[P(E, x)\overline{Q(E, x)}]=
	& \cos^2\alpha(\phi^{\1}_{\alpha, E}(x)\theta_{\alpha, E}(x) - \theta^{\1}_{\alpha, E}(x)\phi_{\alpha, E}(x)) \\
	&+\sin^2\alpha (-\phi_{\alpha, E}(x)\theta^{\1}_{\alpha, E}(x) + \phi^{\1}_{\alpha, E}(x)\theta_{\alpha, E}(x))\\
	&=(\sin^2(\alpha)+\cos^2(\alpha))W(\theta_{\alpha, E}, \phi_{\alpha, E})=1,
	\end{split}
	\end{align} 
 and 
	\begin{align}\lb{imlim3}
	\begin{split}
	|Q(E, x)|^2=& \big(\cos(\alpha)\phi^{\1}_{\alpha, z}(x) - \sin(\alpha)\theta^{\1}_{\alpha, z}(x)\big)^2 + \big(\cos(\alpha)\phi_{\alpha, z}(x) - \sin(\alpha)\theta_{\alpha, z}(x)\big)^2\\
	= & \left\|\begin{bmatrix}
	\phi_E^{\1}(x) & \theta_E^{\1}(x)\\
	\phi_E(x) & \theta_E(x)
	\end{bmatrix} \begin{bmatrix}
	\cos(\alpha) \\ -\sin(\alpha)
	\end{bmatrix} \right\|^2 =\left\| \begin{bmatrix}
	\phi_{\alpha,E}^{\1}(x) \\
	\phi_{\alpha,E}(x) 
	\end{bmatrix} \right\|^2.
	\end{split}
	\end{align}
	It follows from \eqref{imlim}, \eqref{imlim2}, and \eqref{imlim3} that the measure corresponding to the Herglotz function $m_{x, \alpha}(z)$ is given by \eqref{meshr}. Moreover, since $m_{x, \alpha}(z)\to m_{\alpha}(z)$ as $x\rightarrow\infty$, by their Herglotz representations, the corresponding measures converge in the sense as asserted. 
\end{proof}

Having established \eqref{car}, the proof of Theorem~\ref{Lpbiggerthan2criterion} is identical to that of \cite[Theorem 3.7]{MR1666767}:

\begin{proof}[Proof of Theorem~\ref{Lpbiggerthan2criterion}]
Choose a sequence $x_n\rightarrow\infty$ such that
	\begin{equation}
	\lim_{n\rightarrow\infty}\int_{E_1}^{E_2}\|T(E;x_n)\|^pdE<\infty.
	\end{equation}
	Since $\det T(E;x) = 1$ and $\|v_{\alpha}\|=1$,
\[
\|T(E;x_n)v_{\alpha}\|\geq \|T^{-1}(E;x_n)\|^{-1}\|v_{\alpha}\| =  \|T(E;x_n)\|^{-1},
\]
and thus for $q=p/2$,
	\begin{equation}
	\sup_n\int_{E_1}^{E_2}\left(\frac{1}{\pi\|T(E;x_n)v_{\alpha}\|^2}\right)^q\,\dd E\leq \sup_n \int_{E_1}^{E_2}(\pi^{-1}{\|T(E;x_n)\|^2})^q\,\dd E<\infty. 
	\end{equation}
	Hence, by \cite[Lemma 3.8]{MR1666767}, the weak limit $d\mu^{\alpha}$ of measures $d\mu^{\alpha}_{x_n}$ is purely absolutely continuous on $(E_1, E_2)$.
\end{proof}

In the study of decaying potentials, a variant of Carmona's formula is useful:

\begin{theorem}\label{carmona2}
Assume Hypothesis 2.1. For any  $\alpha\in [0,\pi)$, the measures
\begin{equation}\label{carmona2b}
\dd\nu_x^{\alpha} (E) =  \frac{\chi_{(0,\infty)}(E) \sqrt E}{\pi ( E \phi_{\alpha,E}(x)^2 + \phi^{[1]}_{\alpha,E}(x)^2 ) }\,\dd E\,,\qquad x>0
\end{equation}
converge vaguely on $(0,\infty)$ to $\mu^{\alpha}$ as $x\to \infty$ in the sense that 
\[ \lim_{x\to\infty}\int_0^{\infty} h(E)\,\dd\nu_x^{\alpha}(E) = \int_0^{\infty} h(E)\,\dd\mu^{\alpha}(E)\,,\qquad \forall h\in C_c((0,\infty))\,.
\]
\end{theorem}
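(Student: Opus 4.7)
The plan is to mimic the strategy of Theorem~\ref{carmona} by replacing the fixed initial point $\bfi$ in the M\"obius formula \eqref{mxa} with a $z$-dependent point $\bfi\sqrt z$, chosen so that the resulting Herglotz function produces the factor $\sqrt E/(E\phi_{\alpha,E}(x)^2 + \phi_{\alpha,E}^{\1}(x)^2)$ as its boundary density on $(0,\infty)$.

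First I would fix the principal branch of $\sqrt{\,\cdot\,}$ on $\bbC\setminus(-\infty,0]$, so that $\sqrt z\in\bbC_+$ and hence $\bfi\sqrt z\in\bbC_+$ for every $z\in\bbC_+$.  I define
\[
\begin{bmatrix} \tilde m_{x,\alpha}(z) \\ 1 \end{bmatrix} \simeq R_\alpha T(z;x)^{-1} \begin{bmatrix} \bfi\sqrt z \\ 1 \end{bmatrix}, \qquad z\in\bbC_+.
\]
By Proposition~\ref{prop2.5}, $\cM[R_\alpha T(z;x)^{-1}](\bbC_+) = D_x^{\alpha}(z) \subset \bbC_+$, so $\tilde m_{x,\alpha}(z) \in D_x^{\alpha}(z)$; joint analyticity in $z$ of $T(z;x)$ and $\sqrt z$ guarantees that $\tilde m_{x,\alpha}$ is Herglotz.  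The shrinking-disk property $\bigcap_{x\geq 0}\ol{D_x^{\alpha}(z)} = \{m_\alpha(z)\}$ then immediately gives pointwise convergence $\tilde m_{x,\alpha}(z) \to m_\alpha(z)$ on $\bbC_+$.

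Next I would compute the boundary values of $\Im \tilde m_{x,\alpha}$ explicitly. Using $\det T(z;x) = 1$ to invert the transfer matrix and then applying $R_\alpha$, a direct calculation analogous to \eqref{imlim}--\eqref{imlim3} yields
\[
\tilde m_{x,\alpha}(z) = \frac{\bfi\sqrt z\,\theta_{\alpha,z}(x) - \theta_{\alpha,z}^{\1}(x)}{-\bfi\sqrt z\,\phi_{\alpha,z}(x) + \phi_{\alpha,z}^{\1}(x)}.
\]
For $E>0$ and $\varepsilon\downarrow 0$, the squared modulus of the denominator collapses to $E\phi_{\alpha,E}(x)^2 + \phi_{\alpha,E}^{\1}(x)^2$, while the imaginary part of the numerator times the conjugated denominator reduces to $\sqrt E \cdot W(\theta_{\alpha,E},\phi_{\alpha,E}) = \sqrt E$ by the Wronskian normalization $W(\theta_{\alpha,E},\phi_{\alpha,E}) = \det R_\alpha^{-1} = 1$. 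Therefore
\[
\lim_{\varepsilon\downarrow 0} \Im \tilde m_{x,\alpha}(E+\bfi\varepsilon) = \frac{\sqrt E}{E\phi_{\alpha,E}(x)^2 + \phi_{\alpha,E}^{\1}(x)^2}, \qquad E>0,
\]
while for $E<0$ the limit $\bfi\sqrt z\to -\sqrt{|E|}\in\bbR$ forces a vanishing boundary imaginary part. Since this density is continuous and strictly positive on $(0,\infty)$, the representing measure $\tilde\nu_x^{\alpha}$ of $\tilde m_{x,\alpha}$ has no singular part there, so its restriction to $(0,\infty)$ coincides with $\nu_x^{\alpha}$.

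Finally, pointwise convergence of Herglotz functions $\tilde m_{x,\alpha}(z)\to m_\alpha(z)$ on $\bbC_+$ implies vague convergence of the associated representing measures in the $(1+E^2)^{-1}$-weighted sense. Since $\int h\,\dd\tilde\nu_x^{\alpha} = \int h\,\dd\nu_x^{\alpha}$ for every $h\in C_c((0,\infty))$, and $(1+E^2)h\in C_c(\bbR)$ for such $h$, this yields the claimed convergence $\int h\,\dd\nu_x^{\alpha}\to \int h\,\dd\mu^{\alpha}$. The hard part is the boundary-value computation: one must carefully track the $\sqrt z$ factor through the M\"obius formula to recognize the new density, and verify that any additional mass of $\tilde\nu_x^{\alpha}$ that may be produced on $(-\infty,0]$ by the pole structure of the M\"obius transform does not contaminate test integrals supported in $(0,\infty)$—an issue rendered automatic by restricting to $h\in C_c((0,\infty))$.
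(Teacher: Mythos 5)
Your proposal is correct and follows essentially the same route as the paper: your $\bfi\sqrt z$ (principal branch) is exactly the paper's Herglotz function $-\sqrt{-z}$ with its chosen branch, and the remaining steps (membership in the Weyl disk $D_x^{\alpha}(z)$, pointwise convergence via the shrinking disks, the boundary-value computation giving $\sqrt E/(E\phi_{\alpha,E}(x)^2+\phi^{[1]}_{\alpha,E}(x)^2)$, and vague convergence of the representing measures restricted to $(0,\infty)$) coincide with the paper's argument.
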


\begin{proof}
We use the branch of $\sqrt{-z}$ on $\bbC_+$ such that $\re  \sqrt{-z} >0$, $\im \sqrt{-z}  < 0$. With this choice of branch, $-\sqrt{-z}$ is a Herglotz function which continuously extends to $\overline{\mathbb{C}_+}$ with values $\chi_{(0,\infty)}(E)\sqrt{E}$ on $\mathbb{R}$.  For $z\in \mathbb{C}_+$, $x>0$, define $m_{x,\alpha}(z)$ via
	\[ \begin{bmatrix}
		m_{x,\alpha}(z) \\ 1
	\end{bmatrix} \simeq  R_{\alpha}T(z;x)^{-1}\begin{bmatrix}
		-\sqrt{-z}  \\1
	\end{bmatrix} \,.\]
	Since $-\sqrt{-z}$ is Herglotz, $m_{x,\alpha}(z)\in D_{x,\alpha}(z)\subset \mathbb{C}_+$ and is Herglotz as well. Moreover, since $D_x^{\alpha}(z)$ shrinks to a point as $x\to \infty$, $m_{x,\alpha}(z)\to m_{\alpha}(z)$ as $x\to \infty$. By arguments analogous to the proof of Theorem~\ref{carmona}, $\Im m_{x,\alpha}(z)$ has a continuous extension to $(0,\infty)$ with
	\[ \lim_{\epsilon\downarrow 0} \im m_{x,\alpha}(E+i\epsilon) =\frac{ \sqrt E}{ E \phi_{\alpha,E}(x)^2 + \phi^{[1]}_{\alpha,E}(x)^2  }.
 \]
It follows from above that the measure corresponding to $m_{x,\alpha}(z)$ has the restriction to $(0,\infty)$ given by \eqref{carmona2b}, which concludes the proof.
\end{proof}

\subsection{Pr\"ufer variables}\lb{pruf} We now introduce Pr\"ufer variables associated with real eigensolutions of $\ell$ and relate their growth to that of the transfer matrices. In the locally integrable setting, Pr\"ufer variables are a well-established tool for spectral analysis for decaying potentials; we will use them in the proof of Theorem \ref{sparseAltnewtheorem}.

For $k>0$, consider the eigenvalue equation $\ell u=k^2u, u\in\mathfrak D$. For a nontrivial real-valued solution $u$, introduce $\theta:\bbR\rightarrow \bbR$, $R:\bbR\rightarrow(0,\infty)$ via the relations
\begin{align}\lb{uu1}
u(x)  = R(x) \sin(\theta(x)),\qquad u^{\1}(x) & = kR(x)\cos (\theta(x)).
\end{align}
Since a composition of a Lipschitz function with an absolutely continuous function is absolutely continuous, this can be done so that $R, \theta \in \AC([0,\infty))$. The remaining nonuniqueness in the choice of $\theta$ is usually fixed by setting $\theta(0) \in [0,2\pi)$.

\begin{proposition}\lb{prop2.4new} Assume Hypothesis \ref{pot}. For $k>0$, in terms of Pr\"ufer variables, the eigenfunction equation $\ell u = k^2u$ is equivalent to the system
	\begin{align}
	\label{ODE1}
	&\theta' = k - \frac{\tau -\sigma^2}{k} \sin^2(\theta) + \sigma\sin(2\theta), \\
	\label{ODE2}
	&(\log R)'  = \frac{\tau-\sigma^2}{2k}\sin(2\theta) - \sigma\cos (2\theta).
	\end{align}
Moreover, for any $\alpha,\beta\in(0,\infty)$, $\theta_1, \theta_2\in[0, 2\pi)$, there is a constant $C=C(\alpha, \beta, \theta_1, \theta_2)>1$ such that for all $k\in(\sqrt{\alpha}, \sqrt{\beta})$,
	\begin{equation}\lb{RTbound}
	\frac 1 C \max(R(x, \theta_1), R(x, \theta_2))\leq \|T(k^2;x)\|\leq C \max(R(x, \theta_1), R(x, \theta_2)). 
	\end{equation}
\end{proposition}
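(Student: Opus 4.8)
The plan is to treat the two assertions separately: first the equivalence with the Pr\"ufer system \eqref{ODE1}--\eqref{ODE2}, then the two-sided comparison \eqref{RTbound}.

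\emph{Deriving the Pr\"ufer system.} I would start from the first-order form of $\ell u = k^2 u$, $u\in\mathfrak D$, namely $u' = u^{\1} + \sigma u$ and $(u^{\1})' = -\sigma u^{\1} + (\tau-\sigma^2-k^2)u$, with $u,u^{\1}$ absolutely continuous. Since $(u^{\1}(x)/k,u(x))$ never vanishes (uniqueness for \eqref{amat} forces $u\equiv0$ otherwise), composing it with the polar-coordinate map, which is locally Lipschitz on $\bbR^2\setminus\{0\}$, produces $R\in\AC([0,\infty))$ with $R>0$ and $\theta\in\AC([0,\infty))$ satisfying \eqref{uu1} (this is the $\AC$ choice mentioned before the statement). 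Differentiating the two identities in \eqref{uu1} and substituting $u',(u^{\1})'$ gives the linear system
\begin{align*}
(\log R)'\sin\theta + \theta'\cos\theta &= k\cos\theta + \sigma\sin\theta,\\
(\log R)'\cos\theta - \theta'\sin\theta &= -\sigma\cos\theta + \tfrac{\tau-\sigma^2-k^2}{k}\sin\theta.
\end{align*}
Forming $\cos\theta\cdot(\text{first})-\sin\theta\cdot(\text{second})$ isolates $\theta'$ and, after the double-angle simplification $k\cos^2\theta+k\sin^2\theta=k$, yields \eqref{ODE1}; forming $\sin\theta\cdot(\text{first})+\cos\theta\cdot(\text{second})$ isolates $(\log R)'$ and yields \eqref{ODE2}. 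Conversely, given any $\AC$ solution $(\theta,R)$ of \eqref{ODE1}--\eqref{ODE2} with $R>0$, the function $u:=R\sin\theta$ has $u^{\1}=u'-\sigma u=kR\cos\theta\in\AC$, so $u\in\mathfrak D$, and running the computation backwards gives $\ell u=k^2u$; this establishes the equivalence. This part is a routine change of variables.

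\emph{Comparing $R$ with $\|T(k^2;x)\|$.} Fix $k\in(\sqrt\alpha,\sqrt\beta)$ and let $u_j$ ($j=1,2$) be the solution of $\ell u_j=k^2u_j$ with $R_j(0)=1$, $\theta_j(0)=\theta_j$, so $\vec u_j(0)=(k\cos\theta_j,\sin\theta_j)^\top$ and $\vec u_j(x)=T(k^2;x)\vec u_j(0)$. From \eqref{uu1} one has, for every eigensolution and every $x$,
\[
\min(1,k^2)\,R(x)^2 \;\le\; \|\vec u(x)\|^2 = k^2R(x)^2\cos^2\theta(x)+R(x)^2\sin^2\theta(x) \;\le\; \max(1,k^2)\,R(x)^2,
\]
so $\|\vec u(x)\|$ and $R(x)$ are comparable with $k$-dependent constants, and $\|\vec u_j(0)\|$ is comparable to $1$. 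The upper bound in \eqref{RTbound} is then immediate: $R_j(x)\lesssim\|\vec u_j(x)\|=\|T(k^2;x)\vec u_j(0)\|\le\|T(k^2;x)\|\,\|\vec u_j(0)\|\lesssim\|T(k^2;x)\|$. For the lower bound I would take $\theta_1\not\equiv\theta_2\pmod\pi$ (which is all that is needed, and is what makes the inequality hold, since otherwise $\vec u_1(0),\vec u_2(0)$ are parallel and a single Pr\"ufer radius cannot control $\|T\|$); then $M:=[\,\vec u_1(0)\mid\vec u_2(0)\,]$ is invertible with $\det M=k\sin(\theta_2-\theta_1)$, and every unit vector $v$ decomposes as $v=c_1\vec u_1(0)+c_2\vec u_2(0)$ with $|c_1|,|c_2|\le\|M^{-1}\|$, whence $\|T(k^2;x)v\|\le\|M^{-1}\|(\|\vec u_1(x)\|+\|\vec u_2(x)\|)\lesssim\|M^{-1}\|\max(R_1(x),R_2(x))$; taking the supremum over $v$ gives the lower bound.

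\emph{Uniformity in $k$ and the main point.} Finally I would verify that all implicit constants can be chosen uniformly for $k\in(\sqrt\alpha,\sqrt\beta)$: the factors $\min(1,k^2)^{\pm1}$ and $\max(1,k^2)^{\pm1}$ stay bounded away from $0$ and $\infty$ on $[\sqrt\alpha,\sqrt\beta]$, while $\|M^{-1}\|=\|M\|/|\det M|\le\sqrt2\,\max(1,k)\,/\,(k\,|\sin(\theta_1-\theta_2)|)$ is likewise bounded there; this is also what produces the advertised dependence of $C$ on $(\alpha,\beta,\theta_1,\theta_2)$. I do not expect a genuine obstacle here; the only step deserving real care is the lower bound in \eqref{RTbound}, which essentially uses that $\vec u_1(0),\vec u_2(0)$ span $\bbR^2$ (i.e.\ $\theta_1\not\equiv\theta_2\pmod\pi$) together with the quantitative bound on $\|M^{-1}\|$ in terms of $\sin(\theta_1-\theta_2)$.
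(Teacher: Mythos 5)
Your argument is correct and is essentially the paper's: the Pr\"ufer equations come from the same elimination (the paper phrases it as taking scalar products of the vector equation with fixed vectors, which amounts to exactly the linear combinations you form), and for \eqref{RTbound} your matrix $M$ is the paper's $\cU(0)$, with the two-sided bound derived from $T(k^2;x)\cU(0)=\cU(x)$ and the comparability $\|\vec u(x)\|\asymp R(x)$ uniformly for $k\in[\sqrt\alpha,\sqrt\beta]$.

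Two remarks. First, a labeling slip: the chain $R_j(x)\lesssim\|\vec u_j(x)\|=\|T(k^2;x)\vec u_j(0)\|\lesssim\|T(k^2;x)\|$ proves the \emph{left} (lower) inequality in \eqref{RTbound}, and the $\|M^{-1}\|$ argument proves the \emph{right} (upper) one — you have the two names reversed, though the mathematics is fine. Second, and more substantively, your observation that $\theta_1\not\equiv\theta_2\pmod{\pi}$ is needed for $\|T(k^2;x)\|\le C\max\bigl(R(x,\theta_1),R(x,\theta_2)\bigr)$ identifies a genuine unstated hypothesis in the proposition: it claims the bound for all $\theta_1,\theta_2\in[0,2\pi)$, but if $\theta_1\equiv\theta_2\pmod{\pi}$ the two radii coincide, and since $\det T(k^2;x)=1$ forces $\|T\|=\|T^{-1}\|$, a decaying eigensolution (e.g.\ a Wigner--von Neumann embedded eigenfunction at positive energy) has $R(x)\to 0$ while $\|T(k^2;x)\|\to\infty$. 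The paper's use of the proposition (e.g.\ in the proof of Theorem~\ref{sparseAltnewtheorem}(a)) always takes two distinct angles, so the downstream results are unaffected, but the statement as written needs the nondegeneracy condition you added.
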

\begin{proof} First, we rewrite $\ell u-k^2 u=0$ as 
	\begin{equation}
	\begin{bmatrix}
	u^{\1}\\
	u
	\end{bmatrix}' =\begin{bmatrix}
	-\sigma & (\tau - \sigma^2) - k^2\\
	1 & \sigma
	\end{bmatrix}\begin{bmatrix}
	u^{\1} \\
	u
	\end{bmatrix},\qquad ':=\frac{d}{dx}.
	\end{equation}
	Then, substituting \eqref{uu1} into the above equation, we obtain
	\begin{equation}\lb{dereq}
	\begin{bmatrix}
	kR'\cos (\theta)-R\theta'\sin (\theta)\\
	R'\sin(\theta)+R\theta'\cos(\theta)
	\end{bmatrix} =\begin{bmatrix}
	-\sigma k R\cos(\theta) + R(\tau - \sigma^2)\sin(\theta) - k^2R\sin(\theta)\\
	kR\cos(\theta)+ \sigma R\sin(\theta)\end{bmatrix}.
	\end{equation}
	To derive \eqref{ODE1}, we take the scalar product of  both sides of \eqref{dereq} and $(-\sin(\theta), k \cos(\theta))$; and to derive \eqref{ODE2}, we take the scalar product of sides of \eqref{dereq} and $(\cos(\theta), k\sin(\theta))$.  
	
	Let $u_1, u_2$ be solutions corresponding to the initial conditions $\theta(0)=\theta_1, \theta(0)=\theta_2$ respectively. Then, 
	\begin{equation}
	\cU(x)=
	T(k^2;x)
	\cU(0),\qquad \cU(x):=\begin{bmatrix}
	u^{[1]}_1(x) & u^{[1]}_2(x)\\
	u_1(x) & u_2(x)
	\end{bmatrix}.\end{equation}
	Using the representation \eqref{uu1}, one obtains 
	\begin{equation}\lb{c1c2}
	C_1\max(R(x, \theta_1), R(x, \theta_2))\leq \|\cU(x)\|\leq C_2\max(R(x, \theta_1), R(x, \theta_2)),\qquad x\geq 0
	\end{equation} 
	for some constants $C_1, C_2>0$ depending only on $\alpha, \beta$. Finally, since $T(k^2;x) \in\SL$, there exists constants $C_3, C_4>0$ depending only on $\theta_1, \theta_2$ such that
	\begin{equation}\lb{c3c4}
	C_3\|\cU(x)\| \leq \|T(k^2;x)\|\leq C_4 \| \cU(x)\|,\qquad x\geq 0.
	\end{equation}
	Combining \eqref{c1c2}, \eqref{c3c4} yields \eqref{RTbound}.
\end{proof}

\begin{proof}[Proof of Theorem~\ref{theorem.shortrange}]
Consider Pr\"ufer variables $R(x,E)$ associated to the solution $\phi_{\alpha,E}$ for $E \in (0,\infty)$. By \eqref{ODE2}, $\log R(x,E)$ converges uniformly as $x \to\infty$ on every compact interval $[E_1,E_2] \subset (0,\infty)$.  Recall $\dd\nu_x^{\alpha} (E)$ from Theorem \ref{carmona2}, then for any $h \in C_c((0,\infty))$, by uniform convergence,
\begin{equation}
\lim_{x\to\infty}  \int h(E) \dd\nu_x^{\alpha} (E) = 
\lim_{x\to\infty} \int  \frac{h(E) \sqrt E}{\pi ( E \phi_{\alpha,E}(x)^2 + \phi^{[1]}_{\alpha,E}(x)^2 ) }\,\dd E=\int \frac{ h(E) \sqrt E}{\pi ( \lim_{x\to\infty} R(x,E) )^{2}  }\,\dd E.
\end{equation}
Thus,
\[
\chi_{(0,\infty)}(E) \dd \mu^\alpha(E) =  \frac{ \chi_{(0,\infty)}(E) \sqrt E } { \pi ( \lim_{x\to\infty} R(x,E) )^{2} } \dd E. \qedhere
\]
\end{proof}

\section{Distributional sparse potentials. Investigation of spectral types}

In this section we prove Theorem \ref{sparseAltnewtheorem}.

\subsection{Decomposition of sparse potentials} The first step in the proof of Theorem \ref{sparseAltnewtheorem} is to reformulate it in terms of the Hryniv--Mykytyuk decomposition in a way that is consistent with the sparse structure of the potential.  If we applied their decomposition directly to $V$, the dependence on integers in \cite{HrMyk01} would complicate matters; instead, note that \cite[Lemma 2.2]{HrMyk01} gives a decomposition of $W_n \in H^{-1}(\bbR)$ with $\supp W_n \subset [-\Delta, \Delta]$ as $W_n = S_n' + T_n$ with $S_n \in L^2(\bbR)$, $T_n \in L^1(\bbR)$ supported in the same interval  (the authors use $\Delta = 1$ but this is merely a matter of rescaling). Moreover, this decomposition is continuous in $H^{-1}(\bbR)$-norm. Thus, we obtain
\begin{equation}\lb{wnw}
W_n=S_n'+T_n,\quad W=S'+T,
\end{equation} 
with 
\begin{align}
\begin{split}\lb{tnsn}
&\supp(S_n)\cup\supp(S)\cup \supp(T_n)\cup\supp(T)\subset[-\Delta, \Delta], \quad S'+T\not=0,\\
&S, S_n\in L^2(\bbR), T, T_n\in L^1(\bbR),\ \|S_n-S\|_{L^2(\bbR)}\rightarrow 0,\  \|T_n-T\|_{L^1(\bbR)}\rightarrow0.
\end{split}
\end{align}
In addition, without loss of generality, we can assume that $S \neq 0$ and $T \neq 0$: this is because if one of $S, T$ is identically equal to zero, we can pick arbitrary $h\in W^{1,1}(\bbR)$, $\supp(h)\subset[-\Delta, \Delta]$, $S+h\not \equiv 0$, $T-h'\not \equiv 0$. Notice that $W_n=(S_n+h)'+T_n-h'$, $W=(S+h)'+T-h'$.

In summary, we will use the following setup throughout this section:

\begin{hypothesis}\lb{sparsepot}
	Let $\{x_n\}_{n=1}^{\infty}\subset \bbR_+$ be a monotonically increasing sequence such that $x_1 > \Delta$ and
	\begin{equation}\lb{beta}
	\lim\limits_{n\rightarrow\infty}\frac{x_n}{x_{n+1}}=0. 
	\end{equation}
	Let  $\beta>1$ be so that $x_n\geq C\beta^n$ for a fixed constant $C>0$. Let  $T, S, T_n, S_n$ be as in \eqref{tnsn} and suppose, in addition,  $T\not\equiv 0$,  $S\not\equiv 0$.  Furthermore,  fix a sequence $\{d_n\}_{n=1}^{\infty}\subset \bbR$ with
	\begin{equation}\lb{dn}
	\lim_{n\to\infty} d_n = 0.
	\end{equation}
	Let sparse coefficients $\tau, \sigma$  be given by
	\begin{align}\lb{tausigma}
	&\tau(x) = \sum_{n=1}^{\infty} d_nT_n(x-x_n),\quad \sigma(x) = \sum_{n=1}^{\infty}d_nS_n(x-x_n).
	\end{align} 
	Fix arbitrary $\alpha\in[0,\pi)$, and let $H^{\alpha}$ be the corresponding Schr\"odinger operator as defined in Theorem \ref{thmoper}.
\end{hypothesis}

The rest of this paper is dedicated to the proof of Theorem \ref{sparseAltnewtheorem}; parts (a), (b) are provided  in Sections \ref{secac}, see page \pageref{pagea}, and \ref{secsc}, see page \pageref{pageb}, respectively.

\subsection{Auxiliary estimates for Pr\"ufer variables} We begin with a series of auxiliary results. The first one concerns estimates for Pr\"ufer variables and their $k$-derivatives near $x_n$ for large $n$. 

To streamline the exposition, in the remaining part of the paper, we will use $C$ for positive constants that vary from one inequality to the other but always remain $n$-independent. Also, whenever an inequality involving $n$ is mentioned without a specified range of admissible values of $n$, it is assumed that the range is $n\geq n_0$ for some $n_0$.
\begin{lemma}
	\label{argANDradEst}
	Assume Hypothesis \ref{sparsepot} and fix any compact interval $[E_1,E_2]\subset(0,\infty)$. Then, there exists a constant $C>0$ such that for all $k\in [\sqrt{E_1},\sqrt{E_2}]$ and sufficiently large $n$,
		\begin{align}
		&(i)\, \left| \frac{\partial \theta}{\partial k}(x_n+\Delta) \right| \leq Cx_n,\qquad  (ii)\, \left| \frac{\partial^2 \theta}{\partial k^2}(x_n+\Delta) \right| \leq C\min\left\{x_n^2, 1 + \sum_{m=1}^nd_mx_m^2 \right\},\lb{thetone}\qquad\\
		& (iii)\, \left|\log R(x_n+\Delta)\right| \leq C\sum_{m=1}^nd_m,\qquad  (iv)\, \left|\frac{\partial \log R}{\partial k} (x_n+\Delta)\right|\leq C\sum_{m=1}^nd_m x_m\,.\lb{logrn}
		\end{align}
\end{lemma}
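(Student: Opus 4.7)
The plan is to exploit the sparse structure: since $x_n/x_{n+1}\to 0$ and $\supp S_n\cup\supp T_n\subset[-\Delta,\Delta]$, the bumps $I_n:=[x_n-\Delta, x_n+\Delta]$ are disjoint for all $n$ large. On every free interval between consecutive bumps, $\sigma$ and $\tau$ vanish identically, so \eqref{ODE1} reduces to $\theta'=k$ and \eqref{ODE2} to $(\log R)'=0$; consequently $\log R$, $\partial_k\log R$ and $\partial_k^2\theta$ are constant on free intervals, while $\partial_k\theta$ grows linearly by the length of the interval. Thus each of (i)--(iv) reduces to controlling a single-bump jump and summing. The basic ingredient is the uniform bump-size bound
\[
\int_{I_n} \bigl(|\tau|+\sigma^2+|\sigma|\bigr)\,\dd x \le C|d_n|,
\]
which follows from Cauchy--Schwarz together with $\sup_n\bigl(\|S_n\|_{L^2}+\|T_n\|_{L^1}\bigr)<\infty$, a consequence of \eqref{tnsn}.

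Estimate (iii) is then immediate: integrating \eqref{ODE2} over $I_n$ bounds each $\log R$-jump by $C|d_n|$, and we sum. For (i), differentiating \eqref{ODE1} in $k$ produces a linear inhomogeneous ODE $y'=1+a+b\,y$ for $y:=\partial_k\theta$, whose coefficients satisfy $\int_{I_n}(|a|+|b|)\le C|d_n|$. Variation of parameters gives the across-bump map
\[
y(x_n+\Delta) = (1+O(|d_n|))\,y(x_n-\Delta) + 2\Delta + O(|d_n|),
\]
which, combined with the free-interval increment $x_n-x_{n-1}-2\Delta$, yields the scalar recursion $y_n = (1+O(|d_n|))(y_{n-1}+x_n-x_{n-1}) + O(1)$. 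Since $|d_n|\to 0$ and $x_{n-1}/x_n\to 0$, induction delivers $|y_n|\le C x_n$, establishing (i).

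For (ii), differentiating once more shows that $z:=\partial_k^2\theta$ satisfies $z'=F+b\,z$ where $F$ is polynomial of degree $\le 2$ in $y$ with coefficients whose bump-integrals are $O(|d_n|)$. By (i), $|y|\le C x_n$ on $I_n$, so $\int_{I_n}|F|\le C|d_n|x_n^2$, yielding $|z_n|\le (1+C|d_n|)|z_{n-1}|+C|d_n|x_n^2$. Two separate induction arguments then furnish the two bounds in the minimum in (ii): the $x_n^2$ bound uses $|d_n|\to 0$ together with $x_{n-1}/x_n\to 0$ to ensure $(1+C|d_n|)x_{n-1}^2\le \tfrac12 x_n^2$ eventually; the $1+\sum |d_m|x_m^2$ bound follows by telescoping summation, with the accumulated product $\prod(1+C|d_m|)$ kept absorbable into a single constant by invoking geometric growth $x_m\ge C\beta^m$ to dominate cross-terms. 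Finally for (iv), $\partial_k\log R$ satisfies $\partial_x(\partial_k\log R)=g+h\,y$ with bump-integrals of $g,h$ of order $|d_n|$; using $|y|\le Cx_n$ on $I_n$, each across-bump jump is $O(|d_n|x_n)$, and summing yields (iv). The most delicate point is the simultaneous derivation of both estimates in (ii): the multiplicative factor $\prod(1+C|d_m|)$ could in principle blow up when $\sum|d_m|=\infty$, but the geometric growth $x_m\ge C\beta^m$ forces the cumulative sum $\sum_{m<n}|d_m|x_m^2$ to be $o(x_n^2)$, which is the key mechanism absorbing multiplicative losses into the desired right-hand sides.
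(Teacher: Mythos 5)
Your proposal follows essentially the same route as the paper: isolate the bumps $[x_n-\Delta,x_n+\Delta]$, note that on the free intervals $\theta'=k$ and $(\log R)'=0$, bound each across-bump increment via a Gronwall/variation-of-parameters estimate with bump data of size $O(|d_n|)$, prove (i) by an induction driven by $d_n\to 0$ and $x_{n-1}/x_n\to 0$, and then feed (i) into the equations for $\partial_k^2\theta$ and $\partial_k\log R$ to get (ii)--(iv) by summation. The only place where you diverge from the paper, and where your wording is shaky, is the bound $1+\sum_{m\le n}d_mx_m^2$ in (ii): the paper never encounters a product $\prod_m(1+Cd_m)$ there, because it first establishes $|w|\le \widetilde D x_n^2$ throughout the bump and then uses this to bound the whole across-bump jump (including the $Gw$ term) additively by $Cd_nx_n^2$, after which plain telescoping gives the second bound. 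Your phrase that the accumulated product is ``absorbable into a single constant'' is literally false when $\sum|d_m|=\infty$ (the regime relevant to Theorem \ref{sparseAltnewtheorem}(b)); however, the mechanism you cite in your last sentence does work: since $\sum_{m<n}|d_m|x_m^2=o(x_n^2)$ (a consequence of the ratio condition $x_m/x_{m+1}\to 0$ together with $d_m\to 0$, not of the lower bound $x_m\ge C\beta^m$ alone), an induction on the statement $|z_n|\le K\bigl(1+\sum_{m\le n}|d_m|x_m^2\bigr)$ closes for $n$ large and $K$ fixed, so your argument is repairable as written; the paper's ordering of steps simply avoids the issue. One further small point: in (ii) and (iv) you need $|\partial_k\theta|\le Cx_n$ on all of $I_n$, not only at $x_n+\Delta$, but your variation-of-parameters estimate gives exactly that, as does the paper's.
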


\begin{proof}
	Proof of \eqref{thetone}(i). Fix any compact interval $[a,b]\subset\bbR$, $f,g\in L^{1}([a,b])$, and suppose $h\in \AC([a,b])$ satisfying $ h'(x) = f(x) + g(x)h(x)$. Then, for any $x\in [a,b]$,
	\begin{align}\lb{auxin2}
	|h(x)| & \leq |h(a)|e^{\int_a^b|g|\,\dd y} + \int_a^b |f| e^{\int_a^b|g|\,\dd t}\,\dd y  = (|h(a)| + \|f\|_{1,[a,b]}) e^{\|g\|_{1,[a,b]}}.
	\end{align} 
	Let $h(x):= \frac{\partial\theta}{\partial k}(x)$. Differentiating \eqref{ODE1} with respect to $k$, we have $\frac{\partial h}{\partial x}=f+gh$ with 
	\begin{align}\lb{fng}
	f(x,k):=1 + \frac{\tau-\sigma^2}{k^2}\sin^2(\theta),\qquad g(x,k):=  \sigma\cos(2\theta)- \frac{\tau-\sigma^2}{k}\sin(2\theta),
	\end{align} 
	which, for $[a,b]:=[x_n-\Delta, x_n+\Delta]$ and sufficiently large $n$, satisfy
	\begin{align}\lb{fgest}
	\|f(\cdot, k)\|_{L^1(a,b)}\leq 2\Delta+Cd_n,\qquad \|g(\cdot, k)\|_{L^1(a,b)}\leq Cd_n. 
	\end{align}
	Our objective is to prove that there exists $C>0$ such that for sufficiently large $n$,
	\begin{equation}\lb{hnxn}
	|h(x_n+y)| \leq Cx_n,\qquad y\in[-\Delta, \Delta].
	\end{equation}
	Note that $h'(x)=1$ for $x\in (x_{n-1}+\Delta, x_n-\Delta)$; thus,
	\begin{equation}\lb{hest}
	|h(x_{n}-\Delta)|\leq |h(x_{n-1}+\Delta)|+x_n-x_{n-1}-2\Delta.
	\end{equation}
	Then, using \eqref{auxin2} with $f,g$ as in \eqref{fng}, $[a,b]=[x_{n}-\Delta, x_n+y]$,  and employing \eqref{fgest}, 
	\begin{align}
\label{boundonH}
\begin{split}
|h(x_n+y)| & \,\,\,\leq\,\,\, \left(|h(x_{n}-\Delta)| + 2\Delta + Cd_n\right)e^{Cd_n}\\
& \underset{\eqref{hest}}{\leq }\left(|h(x_{n-1}+\Delta)| + x_n-x_{n-1} + Cd_n\right)e^{Cd_n}.
\end{split}
\end{align}
for sufficiently large $n$. Since $\beta>1$, $d_n\to 0$, and $x_n\to \infty$, there exists  $n_1\in\bbN$ such that for all $n\geq n_1$, 
	\begin{equation}\lb{boundonH2}
		 \beta^{-1}e^{Cd_n} \leq \frac{1+\beta^{-1}}{2}\qquad \text{and}\qquad \left(1 + \frac{Cd_n}{x_n}\right)e^{Cd_n} \leq 1+ \left(\frac{1-\beta^{-1}}{2}\right).
	\end{equation}
	For such $n_1$, let $D\geq 2$ be such that \[ |h(x_{n_1-1} + \Delta)|\leq Dx_{n_1-1}.\] We claim that for all $n\geq n_1$, 
	\begin{equation}\lb{xnd1}
	 |h(x_n+y)| \leq Dx_n,\qquad y\in[-\Delta, \Delta].
	\end{equation}  
	Indeed, using \eqref{beta} together with \eqref{boundonH} and \eqref{boundonH2}, for $n\geq n_1, y\in[-\Delta, \Delta]$, we have
	\begin{align*}
	|h(x_n+y)| & \leq (|h(x_{n-1}+\Delta)| + (x_n-x_{n-1}) + Cd_n)e^{Cd_n} \\
	& \leq ((D-1)x_{n-1} + x_n + Cd_n) e^{d_n}\\
	& \leq x_n\left((D-1)\beta^{-1} + 1 + \frac{Cd_n}{x_n} \right) e^{Cd_n}\\
	& \leq x_n \left(D- (D-2)\frac{1-\beta^{-1}}{2} \right) \leq D x_n,
	\end{align*}
   which yields \eqref{thetone}(i). 	
   
  Proof of \eqref{thetone}(ii). Let $w:= \frac{\partial h}{\partial k}= \frac{\partial^2\theta}{\partial k^2}$ and differentiate \eqref{ODE2} twice with respect to $k$, then 
	\begin{equation}\lb{dwx}
		\frac{\partial w}{\partial x} = F(x) + G(x)w(x),
	\end{equation}
	where $F(x):=F_1(x)+ F_2(x)h(x) + F_3(x)[h(x)]^2$ and
	\begin{align}
	F_1(x) & := -2\cdot \frac{\tau-\sigma^2}{k^3}\sin^2(\theta(x)),\qquad  F_2(x) := 2\cdot \frac{\tau-\sigma^2}{k^2}\sin(2\theta(x)),\\
	F_3(x) & := -2\cdot \frac{\tau-\sigma^2}{k}\cos(2\theta(x)) - 2\sigma\sin(2\theta(x))\\
	 G(x)&:= -\frac{\tau-\sigma^2}{k}\sin(2\theta(x)) + \sigma\cos(2\theta(x)).
	\end{align} 
	Note that for $[a,b]:=[x_n-\Delta, x_n+\Delta]$ and sufficiently large $n$,
	\begin{equation}\lb{FGin}
	\|F\|_{L^1(a,b)}\leq Cd_nx_n^2,\qquad \|G\|_{L^1(a,b)}\leq Cd_n,
	\end{equation}
	where we used \eqref{xnd1} in the first inequality. Then, using \eqref{auxin2} with $[a,b]=[x_{n}-\Delta, x_n+y]$, $f=F$, $g=G$, $w(x_n-\Delta)=w(x_{n-1}+\Delta)$, and \eqref{FGin}, 
	\begin{equation}\lb{wxny}
		|w(x_n+y)| \leq (|w(x_{n-1}+\Delta)| + Cd_nx_n^2)e^{Cd_n}.
	\end{equation}
	Since $\beta>1$ and $d_n\rightarrow0$,  for any $C>0$, there is large enough $n_2$ such that for all $n\geq n_2$,
	\begin{equation}\lb{bdn}
	\beta^{-2}e^{Cd_n}\leq \frac{1+\beta^{-1}}{2},\qquad Cd_ne^{Cd_n} \leq \frac{1-\beta^{-1}}{2}\,.
	\end{equation} 
	For such $n_2$, let $\wti {D}\geq 2$ be such that 
	\[ |w(x_{n_2-1}+\Delta)|\leq \wti {D}x_{n_2-1}^2\,. \] 
	We claim that  for all $n\geq n_2$, 
	\begin{equation}\lb{tildd}
	|w(x_n+y)|\leq \wti {D}x_n^2,\qquad y\in[-\Delta, \Delta].
	\end{equation}
	Proceed with induction in $n$: suppose \eqref{tildd} holds for $n-1$; then, employing \eqref{wxny}, for all $y\in[-\Delta, \Delta]$,
	\begin{align}
	\begin{split}\lb{par1}
		|w(x_n+y)| & \,\,\,\leq\,\,\, (|w(x_{n-1}+\Delta)| + Cd_nx_n^2)e^{Cd_n} \\
	& \,\,\,\leq\,\,\, (\wti {D}x_{n-1}^2 + Cd_nx_n^2)e^{Cd_n} \leq x_n^2(\wti{D}\beta^{-2} + Cd_n)e^{Cd_n}\\
	& \underset{\eqref{bdn}}{\leq} x_n^2\left( \wti{D}\frac{1+\beta^{-1}}{2} + \frac{1-\beta^{-1}}{2} \right) \leq x_n^2\left( \wti{D}+(1-\wti D) \frac{1-\beta^{-1}}{2} \right)\leq  \tilde{D}x_n^2.\qquad\quad
	\end{split}
	\end{align} 
To complete the proof of \eqref{thetone}(ii), integrate \eqref{dwx} over $[x_n-\Delta, x_n+\Delta]$ and use \eqref{tildd}; then, 
	\begin{align}
	|w(x_n+\Delta) -w(x_n-\Delta)|  \leq  Cd_n x_n^2,\qquad  n\geq n_2.
	\end{align} 
	Hence,
	\begin{align}
	\begin{split}\lb{par2}
		|w(x_n+\Delta)| & \leq |w(x_{n_2-1}+\Delta)| +  \sum_{m=n_2}^n|w(x_m+\Delta)-w(x_m-\Delta)|\\
	& \leq Dx_{n_2-1}^2 + C\sum_{m=n_2}^n d_mx_m^2\leq C\left(1+\sum_{m=1}^nd_mx_m^2 \right)\,.
	\end{split}
	\end{align}
Combining this with \eqref{par1} concludes the proof for \eqref{thetone}. 

Proof of \eqref{logrn}(iii) \& (iv). Note that $R(x_{n-1}+\Delta)=R(x_n-\Delta)$ as $\frac{\partial \log R}{\partial x}\equiv 0$ on $[x_{n-1}+\Delta,x_n-\Delta]$ . Integrating \eqref{ODE2} over $[x_n-\Delta, x_n+\Delta]$ and noting that $d_n\rightarrow0$, 
	\begin{align*}
	|\log R(x_n+\Delta)-\log R(x_n-\Delta)| & \leq Cd_n,
	\end{align*} 
which implies \eqref{logrn}(iii). To prove \eqref{logrn}(iv), differentiate \eqref{ODE2} with respect to $k$ to get
\begin{align}
\frac{\partial}{\partial x}\frac{\partial  \log R}{\partial k} = -\frac{\tau-\sigma^2}{2k^2}\sin(2\theta) + \frac{\tau-\sigma^2}{2k}\cos(2\theta)\frac{\partial \theta}{\partial k} + \sigma\sin(2\theta)\frac{\partial \theta}{\partial k};
\end{align}
then, integrate both sides over $[x_n-\Delta, x_n+\Delta]$ while noting \eqref{hnxn} and $d_n\rightarrow0$, 
	\begin{align}
	\left|\frac{\partial }{\partial k}\log R(x_n+\Delta) - \frac{\partial }{\partial k}\log R(x_n-\Delta) \right| \leq C d_nx_n.
	\end{align} 
	The latter, in turn, yields  \eqref{logrn}(iv).
\end{proof}

\begin{remark}
Lemma \ref{argANDradEst} and its proof are similar to \cite[Propositions 5.1, 5.2]{MR1628290}, where the case of $S_n=0$ and $T_n=T\in L^{\infty}(\bbR)$ was considered. We extend that proof to the case $S_n\not=0$, and $T_n\in L^1(\bbR)$ by using \eqref{auxin2}, which is an $L^1$ version of the key inequality \cite[eq. (5.7)]{MR1628290}, and verifying new inequalities \eqref{fgest}, \eqref{FGin}. 
\end{remark}
To streamline the exposition, we introduce the following notation
\begin{align}\lb{qndef}
q_n(y, k):= \frac{d_nT(y)-d_n^2S^2(y)}{2k},\qquad  \sigma_n(y):=d_nS(y).
\end{align}
Note that due to \eqref{tnsn}, for a fixed interval  $[\alpha,\beta]\subset (0,\infty)$, we have
\begin{equation}\lb{qnest}
\int_{-\Delta}^{\Delta}|q_n(y, k)|+|\sigma_n(y)| dy \underset{n\rightarrow\infty}{=}\cO(d_n),
\end{equation}
uniformly for  $k\in[\alpha,\beta]$. 

In the following Lemma, we provide the second order expansion of variable $\theta$ with respect to $d_n$ as $n\rightarrow\infty$. This result will be used in Lemma \ref{lem3.4} and the proof of Theorem \ref{sparseAltnewtheorem}(a). 
\begin{lemma}\lb{thetalem} Assume Hypothesis \ref{sparsepot} and fix any compact interval $[E_1,E_2]\subset(0,\infty)$. Then, the following asymptotic expansion 
	\begin{equation}\lb{eqlem33}
	\theta(x_n+y)\underset{n\rightarrow\infty}{=}\theta_n^{(0)}(y)+d_n\theta_n^{(1)}(y)+ \cO(d_n^2)
	\end{equation}
	holds uniformly for $y\in[-\Delta, \Delta]$, $k\in[\sqrt{E_1},\sqrt{E_2}]$, where, recalling \eqref{qndef}, 
	\begin{align}
	&{\theta}^{(0)}_n(y) := \theta(x_{n-1}+\Delta) + k(x_n + y - x_{n-1} - \Delta),\lb{thetzero}\\
	&{\theta}^{(1)}_n(y) := \frac{1}{d_n}\int_{-\Delta}^y\sigma_n(s)\sin(2{\theta}_n^{(0)}(s))-2q_n(s)\sin^2({\theta}_n^{(0)}(s))\,\dd s.\lb{thetonenew}
	\end{align}
\end{lemma}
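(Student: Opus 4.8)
The plan is to run a perturbative (variation-of-constants / Picard iteration) argument for the ODE \eqref{ODE1} on the block interval $[x_n-\Delta, x_n+\Delta]$, treating $d_n$ as the small parameter. On $[x_{n-1}+\Delta, x_n-\Delta]$ the coefficients $\sigma,\tau$ vanish, so there $\theta' = k$ and hence $\theta(x_n-\Delta) = \theta(x_{n-1}+\Delta)+k(x_n-\Delta-x_{n-1}-\Delta) = \theta^{(0)}_n(-\Delta)$, which gives the initial value for the block. Rewriting \eqref{ODE1} in the notation of \eqref{qndef} yields, on $[x_n-\Delta, x_n+y]$,
\[
\theta(x_n+y) = \theta^{(0)}_n(-\Delta) + \int_{-\Delta}^{y} \Big( k + \sigma_n(s)\sin(2\theta(x_n+s)) - 2 q_n(s)\sin^2(\theta(x_n+s)) \Big)\,\dd s,
\]
where I have absorbed the factors of $d_n$ and $d_n^2$ into $\sigma_n$ and $q_n$; note $\theta^{(0)}_n(-\Delta)+k(y+\Delta) = \theta^{(0)}_n(y)$. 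So
\[
\theta(x_n+y) - \theta^{(0)}_n(y) = \int_{-\Delta}^{y}\Big( \sigma_n(s)\sin(2\theta(x_n+s)) - 2q_n(s)\sin^2(\theta(x_n+s))\Big)\,\dd s.
\]

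First I would show the zeroth-order bound $\theta(x_n+y) = \theta^{(0)}_n(y) + \cO(d_n)$ uniformly for $y\in[-\Delta,\Delta]$, $k\in[\sqrt{E_1},\sqrt{E_2}]$: the integrand on the right above is bounded in absolute value by $|\sigma_n(s)| + 2|q_n(s)|$, and \eqref{qnest} gives $\int_{-\Delta}^{\Delta}(|\sigma_n|+|q_n|)\,\dd s = \cO(d_n)$, so the difference is $\cO(d_n)$ directly — no Gronwall needed here since the right-hand side is already controlled. Second, I substitute this crude estimate back into the integral: writing $\theta(x_n+s) = \theta^{(0)}_n(s) + \cO(d_n)$ and expanding $\sin(2\theta)$, $\sin^2(\theta)$ to first order in their argument (the trig functions are smooth with bounded derivatives), I get
\[
\sigma_n(s)\sin(2\theta(x_n+s)) = \sigma_n(s)\sin(2\theta^{(0)}_n(s)) + \cO(d_n |\sigma_n(s)|),
\]
and similarly for the $q_n$ term. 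Integrating, the error terms contribute $\cO(d_n)\cdot\int(|\sigma_n|+|q_n|) = \cO(d_n^2)$, leaving
\[
\theta(x_n+y) = \theta^{(0)}_n(y) + \int_{-\Delta}^{y}\Big(\sigma_n(s)\sin(2\theta^{(0)}_n(s)) - 2q_n(s)\sin^2(\theta^{(0)}_n(s))\Big)\,\dd s + \cO(d_n^2),
\]
and the integral is exactly $d_n\,\theta^{(1)}_n(y)$ by the definition \eqref{thetonenew}. This establishes \eqref{eqlem33}.

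The only genuinely delicate point is keeping all the implied constants uniform in $k$ and in $n$ (for $n$ large); this is where I would be careful. Uniformity in $k$ over the compact set $[\sqrt{E_1},\sqrt{E_2}]$ follows because the $\frac1k$ and $\frac1{k^2}$ factors in $q_n$ are bounded there, and \eqref{qnest} is already asserted uniformly in $k$. Uniformity in $n$ is fine because the bound on the block integral depends on $n$ only through $d_n$, and $d_n\to 0$, so for $n$ large enough all the $\cO(d_n)$ and $\cO(d_n^2)$ terms are genuinely small; the boundedness of $\theta^{(0)}_n$ is irrelevant since only the trig functions of $\theta$ appear, and those are globally bounded. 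I do not anticipate any real obstacle — this is a two-step Picard iteration — the main work is simply organizing the error bookkeeping so the second substitution legitimately produces the $\cO(d_n^2)$ remainder. One should also note in passing that $\theta^{(1)}_n$ is genuinely $n$-dependent (through $\theta(x_{n-1}+\Delta)$ inside $\theta^{(0)}_n$) but is bounded uniformly in $n$ and $k$, again by \eqref{qnest}, so the expansion \eqref{eqlem33} is meaningful.
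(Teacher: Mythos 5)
Your proposal is correct and follows essentially the same route as the paper: write the integral form of \eqref{ODE1} on the block $[x_n-\Delta,x_n+\Delta]$ (with $\theta(x_n-\Delta)=\theta_n^{(0)}(-\Delta)$ because the coefficients vanish between blocks), get the crude bound $\theta(x_n+\cdot)-\theta_n^{(0)}=\cO(d_n)$ from $|\sin|\le 1$ and \eqref{qnest}, and then bootstrap once using the Lipschitz continuity of $\sin$ to trade the $\cO(d_n)$ error in the argument for an $\cO(d_n^2)$ error in the integral. The paper's proof is the same two-step Picard iteration, written as direct estimates on $\int|\theta'-k|$ and on $\int(\theta'-k)-d_n\theta_n^{(1)}$; your observation that no Gronwall argument is needed (unlike in Lemma~\ref{argANDradEst}) is also consistent with the paper's treatment.
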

\begin{proof}
	By \eqref{ODE1}, 
	\begin{align}
	\begin{split}\lb{frstord}
	&\left|\theta(x_n + y) - \theta_n^{(0)}(y)\right|\leq \int_{x_n-\Delta}^{x_n+y}|\theta'(s)- k |\,\dd s\\
	& = \int_{-\Delta}^y|\sigma_n(s)\sin(2{\theta}_n(x_n+s))-2q_n(s)\sin^2({\theta}(x_n+s))| \dd s\underset{n\rightarrow\infty}{=}\cO(d_n),
	\end{split}
	\end{align}
	where in the last step we used \eqref{qnest}. The argument for the second-order asymptotic formula is similar. Note that
	\begin{align}
	&|\theta(x_n+y)-\theta_n^{(0)}(y)-d_n\theta_n^{(1)}(y)|=\left|\int_{x_n-\Delta}^{x_n+y}(\theta'(s)- k )\,\dd s -d_n{\theta}^{(1)}_n(y)\right|\no\\
	&\,\,\,\leq \left| \int_{-\Delta}^y\sigma_n(s)[\sin(2{\theta}_n^{(0)}(s))-\sin(2{\theta}_n^{(0)}(x_n+s))] -2q_n(s)[\sin^2({\theta}_n^{(0)}(s))-\sin^2({\theta}(x_n+s))]\dd s\right|,\no\\
	&\underset{n\rightarrow\infty}{=}\cO(d_n^2),
	\end{align}
	where we used \eqref{qnest} and \eqref{frstord} in the last step.
\end{proof}
\begin{remark}
	A version of Lemma \ref{thetalem} with $S_n=0$ and $T_n=T\in L^{\infty}(-\Delta, \Delta)$ is discussed in \cite[Sections 5,6]{MR1628290}. In our case, notice that when $S_n\not=0$, the integral on the right-hand side of \eqref{thetonenew} contains an additional term $\sigma_n(s)\sin(2{\theta}_n^{(0)}(s))$. This will become relevant in the proof of Theorem \ref{sparseAltnewtheorem}(b). 
\end{remark}

\begin{corollary}
Assume the setting of Lemma \ref{thetalem}. Then, 
	\begin{equation}\lb{thetatilde}
	\frac{\partial\theta^{(0)}_n(y)}{\partial k} >\frac{x_n}{2},
	\end{equation}
	holds for sufficiently large $n$ and all $k\in[\sqrt{E_1},\sqrt{E_2}]$, $y\in[-\Delta, \Delta]$.
\end{corollary}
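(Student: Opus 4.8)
The plan is to simply differentiate the explicit formula \eqref{thetzero} for $\theta_n^{(0)}$ with respect to $k$ and then estimate the resulting terms using the first-order bound from Lemma~\ref{argANDradEst}(i) together with the sparseness hypothesis \eqref{beta}. Concretely, from
\[
\theta_n^{(0)}(y) = \theta(x_{n-1}+\Delta) + k\,(x_n + y - x_{n-1} - \Delta)
\]
one gets, for every $y\in[-\Delta,\Delta]$ and every $k\in[\sqrt{E_1},\sqrt{E_2}]$,
\[
\frac{\partial\theta_n^{(0)}(y)}{\partial k} = \frac{\partial\theta}{\partial k}(x_{n-1}+\Delta) + \big(x_n + y - x_{n-1} - \Delta\big).
\]

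Next I would invoke Lemma~\ref{argANDradEst}(i), applied with index $n-1$, which gives a constant $C>0$, uniform over $k\in[\sqrt{E_1},\sqrt{E_2}]$, such that $\big|\tfrac{\partial\theta}{\partial k}(x_{n-1}+\Delta)\big|\le C x_{n-1}$ for all sufficiently large $n$. Combining this with $y\ge -\Delta$ yields
\[
\frac{\partial\theta_n^{(0)}(y)}{\partial k} \ge x_n - (C+1)x_{n-1} - 2\Delta.
\]
Now the sparseness condition \eqref{beta}, i.e.\ $x_{n-1}/x_n\to 0$, guarantees $(C+1)x_{n-1} < x_n/4$ for $n$ large, and $x_n\to\infty$ guarantees $2\Delta < x_n/4$ for $n$ large; hence the right-hand side exceeds $x_n - x_n/4 - x_n/4 = x_n/2$, which is exactly \eqref{thetatilde}.

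This argument is essentially bookkeeping, so there is no real obstacle; the only point requiring a moment's care is that the constant $C$ coming from Lemma~\ref{argANDradEst}(i) be uniform in $k$ over the compact interval $[\sqrt{E_1},\sqrt{E_2}]$ — which is precisely how that lemma is stated — and that the threshold $n_0$ beyond which all estimates hold can be chosen independently of $k$ and $y$, again consistent with the uniformity built into Lemma~\ref{argANDradEst} and the fact that the remaining estimates depend only on the sequence $\{x_n\}$.
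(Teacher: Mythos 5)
Your proposal is correct and follows essentially the same route as the paper: differentiate \eqref{thetzero}, bound $\partial_k\theta(x_{n-1}+\Delta)$ via Lemma~\ref{argANDradEst}(i), and absorb the $(C+1)x_{n-1}$ and $2\Delta$ terms using sparsity \eqref{beta}. In fact your write-up is slightly cleaner than the paper's, whose displayed inequality chain contains a small typo (an extraneous ``$+y$'' where ``$-\Delta$'' is evidently intended).
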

\begin{proof}
Differentiating \eqref{thetzero} with respect to $k$ and using \eqref{thetone}, we get
\begin{align}
\frac{\partial\theta^{(0)}_n(y)}{\partial k}  &:= \frac{\partial\theta(x_{n-1}+\Delta)}{\partial k}  + x_n + y - x_{n-1} - \Delta\\
&\,\,\geq x_n +y-(C+1)x_{n-1}+y>\frac{x_n}{2},
\end{align}
where we used \eqref{beta} in the last step. 
\end{proof}

To conclude this section, we show that \eqref{dn} rules out point spectrum for $H^{\alpha}$.
\begin{proposition}\lb{prop3.9}
Assume Hypothesis~\ref{sparsepot}. Then, $\spec_{\pp}(H^{\alpha})\cap (0,\infty)=\emptyset$ for all $\alpha\in[0,\pi)$. 
\end{proposition}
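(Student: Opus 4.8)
The plan is to apply the Simon--Stolz-type criterion of Lemma~\ref{lemmaSimonStolz}: since an eigenvalue of $H^\alpha$ forces a nontrivial $L^2$ solution of $\ell u=Eu$, it suffices to prove that
\[
\int_0^\infty \frac{dx}{\|T(E,x)\|^2}=\infty \qquad\text{for every } E\in(0,\infty).
\]
So fix $E=k^2>0$. I will show that $\|T(E,x)\|$ grows at most exponentially in $n$ for $x$ near $x_n$, while the sparseness \eqref{beta} forces the gaps between consecutive bumps to grow faster than any geometric rate; together these make the integral diverge.

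For the first claim, note that by Hypothesis~\ref{sparsepot} the supports of $d_nS_n(\cdot-x_n)$ and $d_nT_n(\cdot-x_n)$ lie in $[x_n-\Delta,x_n+\Delta]$, and by \eqref{beta} these intervals are pairwise disjoint for $n\ge n_0$. On the bump interval $[x_n-\Delta,x_n+\Delta]$ one has $\sigma=d_nS_n(\cdot-x_n)$, $\tau=d_nT_n(\cdot-x_n)$, so the operator-norm bound \eqref{conste1}, together with $\sup_n|d_n|<\infty$ (from \eqref{dn}) and $\sup_n(\|S_n\|_{L^2}+\|T_n\|_{L^1})<\infty$ (from \eqref{tnsn}), gives $\int_{x_n-\Delta}^{x_n+\Delta}\|A(E,t)\|\,dt\le L$ with $L=L(E)$ independent of $n$. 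By Gronwall's inequality (as in the proof of Lemma~\ref{lem2.6}(i)), the bump transfer matrix satisfies $\|T(E;x_n+\Delta,x_n-\Delta)\|\le e^{L}$. On each gap $[x_n+\Delta,x_{n+1}-\Delta]$ the equation is free, so $T(E;x,x_n+\Delta)$ is a free transfer matrix of norm bounded by a constant $C_k$ uniformly in $x$. Multiplying these factors and iterating, $\|T(E;x)\|\le C(E)\,M^{\,n}$ for all $x\in[x_n+\Delta,x_{n+1}-\Delta]$ and $n\ge n_0$, where $M=e^{L}C_k$. (Alternatively, Proposition~\ref{prop2.4new} together with Lemma~\ref{argANDradEst}(iii) and $d_n\to0$ yields the sharper bound $\|T(E;x)\|\le C(E)e^{\varepsilon n}$ for arbitrary $\varepsilon>0$, which would work equally well.)

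For the second claim, from $x_n/x_{n+1}\to0$ one gets, for every $\gamma>1$, a constant $c_\gamma>0$ with $x_n\ge c_\gamma\gamma^n$ for all $n$, and moreover $x_{n+1}-x_n-2\Delta\ge x_{n+1}/2$ for $n$ large. Taking $\gamma=2M^2$ and using that the gaps $[x_n+\Delta,x_{n+1}-\Delta]$ are disjoint,
\[
\int_0^\infty\frac{dx}{\|T(E,x)\|^2}\ge\sum_{n\ge n_0}\frac{x_{n+1}-x_n-2\Delta}{C(E)^2M^{2n}}\ge\sum_{n\ge n_0}\frac{c_\gamma\gamma^{\,n+1}}{2C(E)^2M^{2n}}=\frac{c_\gamma\gamma}{2C(E)^2}\sum_{n\ge n_0}2^{\,n}=\infty.
\]
By Lemma~\ref{lemmaSimonStolz}, $E$ is not an eigenvalue of $H^\alpha$; since $E\in(0,\infty)$ and $\alpha\in[0,\pi)$ were arbitrary, $\spec_{\pp}(H^\alpha)\cap(0,\infty)=\emptyset$.

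The only step needing care is the $n$-uniform bound $\|T(E;x_n+\Delta,x_n-\Delta)\|\le e^{L}$ through a single bump, which is exactly where boundedness of $\{d_n\}$ and the uniform-support, uniform-norm structure of the profiles $S_n,T_n$ enter; once this is in hand, the divergence of the integral is a routine consequence of the super-geometric spacing of the $x_n$.
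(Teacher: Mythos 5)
Your proof is correct and reaches the same conclusion by a closely related but not identical route. The paper's proof fixes an arbitrary real eigensolution $u$, normalizes $R(0)=1$, and invokes Lemma~\ref{argANDradEst}(iii) to obtain $R(x_n+\Delta)^2\geq\exp(-2C\sum_{m\le n}d_m)$, i.e., at most geometric decay of $R$; combined with the superexponential gap lengths and the fact that $R$ is constant across each gap, this gives $\int_0^\infty R^2=\infty$, and Theorem~\ref{theoremweightedL2eigenfunction} then rules out $u\in L^2$. You instead verify the Simon--Stolz hypothesis $\int_0^\infty\|T(E;x)\|^{-2}\,dx=\infty$ by proving a crude geometric upper bound $\|T(E;x)\|\le C(E)M^n$ on the $n$-th gap via Gronwall on each bump (using the $n$-uniform $L^1$-mass of $\|A(E,\cdot)\|$ over $[x_n-\Delta,x_n+\Delta]$, which is where boundedness of $d_n$ and the uniform profiles enter) and boundedness of the free transfer matrix on the gaps. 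Both arguments hinge on exactly the same mechanism — per-bump control that is at most exponential in $n$ against gaps that grow faster than any fixed geometric rate — and both ultimately pass through Theorem~\ref{theoremweightedL2eigenfunction} (yours via Lemma~\ref{lemmaSimonStolz}). Your version is a bit more self-contained since it does not rely on the Prüfer estimates of Lemma~\ref{argANDradEst}, at the cost of redoing a Gronwall estimate; the paper's version recycles machinery that it needs anyway for the main theorems. Both are sound, and your parenthetical remark shows you also see how to connect back to the paper's route via Proposition~\ref{prop2.4new} and Lemma~\ref{argANDradEst}(iii).
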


\begin{proof}
Consider the Pr\"ufer variables corresponding to a nontrivial real eigensolution $u$ at $E > 0$, normalized so that $R(0) = 1$. By \eqref{logrn}(iii),
\[
 R(x_n+\Delta)^2 \ge \exp \left( - 2 C \sum_{m=1}^{n}d_m \right).
\]
This means at most exponential decay of the sequence $R(x_n+ \Delta)^2$, since the sequence $d_n$ is bounded. Due to the superexponential growth \eqref{beta}, this implies
\[
(x_{n+1} -x_n - 2\Delta) R(x_n+\Delta)^2 \rightarrow \infty, \qquad n\rightarrow\infty.
\]
Since $R(x)$ is constant on $[x_n+\Delta, x_{n+1} - \Delta]$, this implies
\[
\int_0^\infty R(x)^2 \dd x = \infty
\]
and, by Theorem~\ref{theoremweightedL2eigenfunction}, this implies $u \notin L^2(\bbR_+)$.
\end{proof}

\subsection{Purely absolutely continuous spectrum}\lb{secac}
In this section, we provide the proof of Theorem \ref{sparseAltnewtheorem} part (a). 
\begin{proof}[Proof of Theorem \ref{sparseAltnewtheorem} $($a$)$] \lb{pagea}By Lemma \ref{lem2.4} $\spec_{\ess}(H^{\alpha})=[0,\infty)$. Then, by Theorem \ref{carmona}, it suffices to show that for every finite interval $[E_1,E_2]\subset (0,\infty)$,
\begin{equation}\lb{e1e2}
\liminf_{n\rightarrow\infty}\int\limits_{E_1}^{E_2}\|T(E;x_n+\Delta)\|^4\,\dd E<\infty.
\end{equation}	
In fact, we will show that for any $\theta\in[0,2\pi)$ and any non-negative $g\in C^{\infty}_0(0,\infty)$ (after possibly passing to a subsequence),
\begin{equation}\lb{supbn}
\sup_n B_n<\infty,\qquad B_n:=\int\limits_{0}^{\infty}g(k) |R(x_n+\Delta, \theta)|^4 \dd k<\infty.
\end{equation}
The latter together with Proposition \ref{prop2.4new} yields \eqref{e1e2}. Explicitly, we will derive a recursive inequality 
\begin{equation}\lb{bns}
B_n\leq (1+\rho_n)B_{n-1},
\end{equation}
for a sequence $\{\rho_n\}\in \ell^1(\bbN)$, $\rho_n>0$, which is sufficient for  \eqref{supbn}.  To that end, we integrate (\ref{ODE2}) over the interval $[x_n-\Delta, x_n+\Delta]$ and use $R(k;x_n-\Delta)=R(k;x_{n-1}+\Delta)$ to obtain
\begin{equation}\lb{r4}
R(k;x_n+\Delta)^4 = R(k;x_{n-1}-\Delta)^4\exp(Q_n),
\end{equation}
where
\begin{align*}
Q_n & = \frac{2}{k}\int_{-\Delta}^{\Delta} (d_nT_n(y) - b_n^2S^2(y))	\sin(2\theta(x_n+y))\,\dd y  - 4\int_{-\Delta}^{\Delta}d_nS_n(y)\cos(2\theta(x_n+y))\,\dd y.
\end{align*} 
Then,
\begin{align}\lb{qqtilde}
|Q_n- \wti {Q}_n| \leq C d_n^2
\end{align}
where 
\begin{equation}\lb{qntilde}
 \wti {Q}_n:=\frac{2}{k}\int_{-\Delta}^{\Delta} (d_nT_n(y) - d_n^2S_n^2(y))\sin(2\theta^{(0)}_n(y))\,\dd y - 4 \int_{-\Delta}^{\Delta}d_nS_n(y)\cos(2\theta^{(0)}_n(y))\,\dd y\,. 
\end{equation} 
and $\theta^{(0)}_n$ is as in  \eqref{thetzero}. Indeed, \eqref{qqtilde} follows readily from 
\begin{equation}
|\sin(2{\theta}_n(y))-\sin(2\theta^{(0)}_n(y))|\leq C |{\theta}_n(y)-\theta^{(0)}_n(y)|\leq C d_n^2,\qquad  y\in[-\Delta, \Delta].
\end{equation}

Returning back to \eqref{r4}, notice that \eqref{qqtilde} together with $|Q_n|\leq C d_n$ yields
\begin{align}
R(k;x_n+\Delta)^4   &\leq R(k;x_{n-1}-\Delta)^4 (1 + |Q_n|  + C Q_n^2)\\
&\leq R(k;x_{n-1}-\Delta)^4  \big(1 + |\wti Q_n| +Cd_n^2\big).
\end{align}	
To obtain \eqref{bns}, multiply the above inequalities by $g(k)$ and integrate over $(0,\infty)$; then, 
	\begin{align}
B_n  \leq B_{n-1}(1+Cd_n^2) + E_n,\lb{bnbn1}\qquad E_n:= \int g(k)R(k;x_{n-1} + \Delta)^4\wti {Q}_n\,\dd k.
\end{align} 
Recalling \eqref{qntilde} and exchanging the order of integration, we obtain
	\begin{align}
	\begin{split}\lb{3ens}
		E_n & = \int_{-\Delta}^{\Delta} 2d_nT_n(y)\int \frac{g(k)}{k} R(k;x_{n-1}+\Delta)^4\sin(2\theta^{(0)}_n(y))\,\dd k \,\dd y \\
	& \quad  -\int_{-\Delta}^{\Delta}2d_n^2S_n^2(y)\int \frac{g(k)}{k} R(k;x_{n-1}+\Delta)^4\sin(2\theta^{(0)}_n(y))\,\dd k \,\dd y \\
	& \quad -\int_{-\Delta}^{\Delta}4d_nS_n(y)\int g(k) R(k;x_{n-1}+\Delta)^4\cos(2\theta^{(0)}_n(y))\,\dd k \,\dd y\,,
	\end{split}
	\end{align}
where note that all terms above are of the form
\begin{equation}\lb{cen}
\cE_n:=\int_{-\Delta}^{\Delta} \gamma_n w_n(y)\int \Psi(k) R(k;x_{n-1}+\Delta)^4 u(2\theta^{(0)}_n(y))\,\dd k \,\dd y , 
\end{equation}
with  
\begin{align}
\begin{split}\lb{cen2}
& \gamma_n\in\{d_n, d_n^2\},\ \Psi\in C_0^{\infty}(\bbR_+),\quad u\in \{\sin(x), \cos(x)\}\\
&w_n\in\{T_n, S_n, S_n^2\}\subset L^{1}(\bbR), \ \sup_{n\geq 1}\|w_n\|_{L^1(\bbR)}<\infty.
\end{split}
\end{align}

{\it Claim.} For $B_{n}$ and $\cE_n$ defined in \eqref{supbn} and \eqref{cen} respectively, and $\beta>1$ as in Hypothesis \ref{sparsepot}, there is a sequence $\{s_n\}_{n\geq 1}\in\ell^1(\bbN)$ such that 
\begin{equation}\lb{auxin}
\cE_n\leq C\left(\beta^{-n/2}+B_{n-1}s_n\right).
\end{equation}
\begin{proof}[Proof of Claim] Let $v$ be either $\sin$ or $\cos$ so that one has  $u= v'$, and rewrite $\cE_n$ as 
\begin{equation}
\cE_n=\int_{-\Delta}^{\Delta} \gamma_nw_n(y)\int \Psi(k) R(k;x_{n-1}+\Delta)^4 \frac{1}{2\frac{\partial \theta^{(0)}_n}{\partial k}(y)} \frac{\partial v}{\partial k}(2\theta^{(0)}_n(y))\,\dd k \,\dd y.
\end{equation}
Next, we integrate by parts with respect to $k$ to obtain three integrals, each corresponding to applying $\partial_k$ to one of the three functions in 
\begin{equation}\lb{prod}
\Psi(k)\,\cdot\, R(k;x_{n-1}+\Delta)^4 \,\cdot\,\frac{1}{2\frac{\partial \theta^{(0)}_n}{\partial k}(y)}.
\end{equation}

{\it Case 1:} $\partial_k$ lands on the first term in \eqref{prod}. Then,
\begin{align}
&\left|\int_{-\Delta}^{\Delta} \gamma_n w_n(y)\int \frac{\partial \Psi(k)}{\partial k}\ R(k;x_{n-1}+\Delta)^4 \frac{v(2\theta^{(0)}_n(y))}{2\frac{\partial \theta^{(0)}_n}{\partial k}(y)}  \,\dd k \,\dd y\right|\\
&\quad \leq \frac{C d_n}{x_n} \exp\left(\sum_{m=1}^{n-1}d_m\right)\leq C \beta^{-n}d_n \exp\left(\frac{n\log\beta}{2}\right)\leq C\beta^{-n/2},
\end{align}
where  $\beta>1$ is such that $x_n\geq C\beta^n$ (Hypothesis \ref{sparsepot}) and in the first inequality, we used
 \begin{align}
 \log(R(k;x_{n-1}+\Delta))&\leq C\sum_{m=1}^{n-1}d_m \qquad\text{ by  }\eqref{logrn}(iii),\\
\begin{split}
\frac{1}{2\frac{\partial \theta^{(0)}_n}{\partial k}(y)}&\leq \frac{C}{x_n}\qquad\,\,\,\,\qquad\text{\ by\ }\eqref{thetatilde},\,\, |v(2\theta^{(0)}_n(y))|\leq 1,\\
\text{and }\quad \int_{-\Delta}^{\Delta} \int w_n(y)\Psi(k)dy\,dk &\leq C\qquad\quad\qquad\,\text{\ by\ }\eqref{cen2} \lb{vsink},
\end{split}
\end{align}
in the second inequality, we used $d_n=o(1)$.

{\it Case 2:} $\partial_k$ lands on the middle term in \eqref{prod}. We employ $\partial_kR^4=R^4\partial_k \log R^4$ and \eqref{logrn}\emph{(iv)} to estimate the $R$-term, and \eqref{thetatilde} to estimate the $\theta^{(0)}_n$-term as
\begin{align}
&\left|\int_{-\Delta}^{\Delta} \gamma_n w_n(y)\int\Psi(k)R(k;x_{n-1}+\Delta)^4 \partial_k\log (R(k;x_{n-1}+\Delta)^4)\frac{v(2\theta^{(0)}_n(y))}{2\frac{\partial \theta^{(0)}_n}{\partial k}(y)} \dd k \,\dd y\right|\\
&\quad \leq C d_n\left(\sum_{m=1}^{n-1}d_mx_m\right)\frac{1}{x_n}B_{n-1}\leq C  \eta_n B_{n-1},
\end{align}
where in the first inequality, we used \eqref{vsink} and
\begin{align}
& R(k;x_{n-1}+\Delta)^4\leq C\sum_{m=1}^{n-1}d_mx_m,\qquad \text{ by}\,\,\eqref{logrn} (iv);
\end{align}
in the second inequality, we used $d_m=o(1)$, \eqref{beta}, and 
\begin{equation}\lb{etan}
\eta_n:= \frac{d_n}{x_n} \sum_{m=1}^{n-1}d_mx_m,\qquad \text{with}\,\, \sum_{n=1}^{\infty}\eta_n<\infty,
\end{equation}
with \eqref{etan} proved in Remark \ref{propap12}. 

{\it Case 3:} $\partial_k$ lands on the last term in \eqref{prod}. In this case, we have
\begin{align}
&\left|\int_{-\Delta}^{\Delta} \gamma_n w_n(y)\int\Psi(k)R(k;x_{n-1}+\Delta)^4 \frac{1}{\left(\frac{\partial \theta^{(0)}_n}{\partial k}(y)\right)^2}\frac{\partial^2 \theta^{(0)}_n(y)}{\partial^2 k}v(2\theta^{(0)}_n(y)) \dd k \,\dd y\right|\\
&\quad \leq C d_n\left(1+\sum_{m=1}^{n-1}d_mx_m^2\right)\frac{1}{x_n^2}B_{n-1}\leq C\kappa_nB_{n-1},
\end{align}
where in the first inequality, we used \eqref{thetone}(ii) and \eqref{vsink}; in the second inequality, we set
\begin{equation}\lb{kapan}
\kappa_n:=\frac{d_n}{x_n^2}\left(1+\sum_{m=1}^{n-1}d_mx_m^2\right),\qquad \text{with}\,\, \sum_{n=1}^{\infty}\kappa_n<\infty,
\end{equation}
with \eqref{kapan} proved in Remark \ref{propap12}. 

Combining cases 1-3, we obtain \eqref{auxin}.
\end{proof}
Since all three terms on the right-hand side of \eqref{3ens} are of the type $\cE_n$,
\begin{equation}
E_n\leq C\left(\beta^{-n/2}+B_{n-1}s_n\right),\qquad \text{with}\,\, \{s_n=\eta_n+\kappa_n\}_{n\in\bbN}\in \ell^1(\bbN)
\end{equation}
Combining this with \eqref{bnbn1}, for sufficiently large $n$, 
\begin{equation}
B_n\leq B_{n-1}(1+Cd_n^2+C s_n) + C \beta^{-n/2}.
\end{equation}
Therefore, $\max(1, B_n)\leq (1+Cd_n^2+ s_n +  \beta^{-n/2}))\max(1, B_{n-1})$ and thus \eqref{supbn} holds. 
\end{proof}

\begin{remark}\lb{propap12} In the setting of Theorem \ref{sparseAltnewtheorem}(a), the numerical series introduced in \eqref{etan} and \eqref{kapan} are convergent due to \cite[Lemma 5.3]{MR1628290}; we expand the concise proof provided therein. For a numerical sequence $d=\{d_n\}_{n\in\bbN}$, consider the convolution operator
\begin{equation}
(T_{\gamma}d)_n:=\sum_{m=1}^{\infty}\gamma^{-|m-n|}d_m,\qquad \text{for}\,\,\gamma>1.
\end{equation}
By Young's inequality, $T_{\gamma}$ is a bounded linear operator on $\ell^2(\bbN)$. Let $\gamma>1$ be such that $\frac{x_m}{x_n}\leq C\gamma^{-|m-n|}$, $m\leq n$. Then, \eqref{etan} follows from
\begin{equation}\lb{sumdns}
 \sum_{n=1}^{\infty}d_n \sum_{m=1}^{n-1}d_m\frac{x_m}{x_n}\leq  \sum_{n=1}^{\infty}d_n \sum_{m=1}^{\infty}d_m\gamma^{-|m-n|}=\langle d, T_\gamma d \rangle_{\ell^2}\leq \|T_{\gamma}\|_{\cB(\ell^2(\bbN))}\|d\|^2_{\ell^2(\bbN)}<\infty,
\end{equation}
and \eqref{kapan} follows from 
\begin{align}
 \sum_{n=1}^{\infty}d_n\left(1+\sum_{m=1}^{n-1}d_m\left(\frac{x_m}{x_n}\right)^2\right)&\leq  \sum_{n=1}^{\infty}\left(C\beta^{-2n}+d_n\sum_{m=1}^{n-1}d_m\gamma^{-2|m-n|}\right)\\
 &\leq C+ \|T_{\gamma^2}\|_{\cB(\ell^2(\bbN))}\|d\|_{\ell^2(\bbN)}<\infty,
\end{align}
where $\beta$ is as in Hypothesis \ref{sparsepot} in the second inequality.
\end{remark}

\subsection{Purely singular continuous spectrum}\lb{secsc}
In this section, we provide the proof of Theorem \ref{sparseAltnewtheorem} part (b).

Since Proposition \ref{prop3.9} rules out the presence of positive eigenvalues, to demonstrate the absence of absolutely continuous spectrum, the strategy is to verify the conditions of Theorem \ref{cor2.11} via \eqref{RTbound} and 
\begin{equation}
\lim\limits_{j\rightarrow\infty}R(x_{n_j}+\Delta, k)=\infty.
\end{equation}

We begin with a set of auxiliary results concerning the Fourier transform of the potential. We will use the notation

\begin{equation}\lb{arz}
\hatt f (k):=\int_{-\infty}^{\infty}e^{2\bfi k y}f(y) dy,\quad  \Ar(\hatt f(z)):=\begin{cases}\arg(\hatt f(z)), &\hatt f(z)\not=0, \\
	0,  &\hatt f(z)=0,
\end{cases}
\end{equation}

\begin{lemma}\lb{lem312}
Assume Hypothesis \ref{sparsepot}. Then

(i) For $j=0,1,2$ one has 
\begin{align}\lb{arts}
	\frac{d^{j}}{dz^j}\Ar(\hatt T_n(z))\rightarrow  \frac{d^{j}}{dz^j}\Ar(\hatt T(z)),\ n\rightarrow\infty,
\end{align} 
uniformly for $z$ in compact intervals $\cI\subset (0,\infty)$ that contain no roots of $\hatt T$. In particular, for such $\cI$ one has
\begin{align}\lb{arts2}
\limsup\limits_{n\rightarrow \infty}\ \sup_{z\in \cI}	\left|\frac{d^{j}}{dz^j}\Ar(\hatt T_n(z))\right|<\infty,\  j=0,1,2.
\end{align}
Identical assertions hold with $T$ replaced by $S$. 

(ii) Let $\Phi(z):=(2z)^{-1}\hatt T(z)-\bfi \hatt S(z)$ and suppose that a compact interval $J\subset(0,\infty)$ contains no roots of $\Phi$. Then one has
\begin{align}\lb{newcondition}
\liminf\limits_{n\rightarrow \infty}\ \inf_{z\in J}	\left|\frac{\hatt T_n(z)}{2z}-\bfi \hatt S_n(z)\right|^2>0.
\end{align}
\end{lemma}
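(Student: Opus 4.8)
The plan is to prove part (i) first and then deduce part (ii) as a routine consequence. For part (i), the key observation is that $\widehat{T}_n \to \widehat{T}$ in a strong enough sense to control two derivatives. Since $T_n \to T$ in $L^1(\bbR)$ with all supports contained in the fixed interval $[-\Delta,\Delta]$, for each $z$ we have $\widehat{T_n}(z) = \int_{-\Delta}^{\Delta} e^{2\bfi z y} T_n(y)\,dy$, and differentiating under the integral sign gives $\frac{d^j}{dz^j}\widehat{T_n}(z) = \int_{-\Delta}^{\Delta} (2\bfi y)^j e^{2\bfi z y} T_n(y)\,dy$. Because $|(2\bfi y)^j e^{2\bfi z y}| \le (2\Delta)^j$ uniformly for $y \in [-\Delta,\Delta]$ and $z$ real, we get $\sup_{z \in \bbR}|\frac{d^j}{dz^j}(\widehat{T_n} - \widehat{T})(z)| \le (2\Delta)^j \|T_n - T\|_{L^1(\bbR)} \to 0$ for $j = 0,1,2$. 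So $\widehat{T_n} \to \widehat{T}$ in $C^2$ uniformly on $\bbR$. Now fix a compact interval $\cI \subset (0,\infty)$ containing no roots of $\widehat{T}$; by compactness $\inf_{z \in \cI}|\widehat{T}(z)| =: c > 0$, and by uniform convergence $\inf_{z \in \cI}|\widehat{T_n}(z)| \ge c/2$ for large $n$. On such a neighborhood the map $w \mapsto \arg(w)$ (suitably branched) is real-analytic away from $0$, so $\Ar(\widehat{T_n}(z))$ and its first two $z$-derivatives are given by explicit rational/real-analytic expressions in $\widehat{T_n}(z)$, $\widehat{T_n}'(z)$, $\widehat{T_n}''(z)$, $\overline{\widehat{T_n}(z)}$ — for instance $\frac{d}{dz}\Ar(\widehat{T_n}(z)) = \Im\big(\widehat{T_n}'(z)/\widehat{T_n}(z)\big)$ and a similar formula with a quotient involving $\widehat{T_n}''$ and $(\widehat{T_n}')^2$ for the second derivative. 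Since each input converges uniformly on $\cI$ and the denominators stay bounded below, the compositions converge uniformly on $\cI$, giving \eqref{arts}; then \eqref{arts2} is immediate since a uniformly convergent sequence is uniformly bounded. The argument for $S_n \to S$ is verbatim the same, using $S_n \to S$ in $L^2(\bbR)$ with fixed compact support (which in particular forces $S_n \to S$ in $L^1(\bbR)$), so $\widehat{S_n} \to \widehat{S}$ in $C^2$ uniformly as well.

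For part (ii), set $\Phi_n(z) := (2z)^{-1}\widehat{T_n}(z) - \bfi\,\widehat{S_n}(z)$, which is exactly the quantity whose modulus-squared appears in \eqref{newcondition}. From part (i)'s uniform $C^0$ convergence (we only need $j=0$ here) together with the fact that $z \mapsto (2z)^{-1}$ is continuous and bounded on the compact set $J \subset (0,\infty)$, we get $\Phi_n \to \Phi$ uniformly on $J$. Since $J$ contains no roots of $\Phi$ and $\Phi$ is continuous, $\inf_{z \in J}|\Phi(z)| =: c' > 0$ by compactness; uniform convergence then yields $\inf_{z \in J}|\Phi_n(z)| \ge c'/2$ for all sufficiently large $n$, hence $\liminf_{n\to\infty}\inf_{z \in J}|\Phi_n(z)|^2 \ge (c')^2/4 > 0$, which is \eqref{newcondition}.

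The only mildly delicate point — the one I would write out carefully — is the branch issue for $\Ar$ in part (i): one must observe that on a neighborhood of $\cI$ where $|\widehat{T_n}(z)| \ge c/2$ and $\widehat{T_n}$ is close to $\widehat{T}$, the values $\widehat{T_n}(z)$ all lie in a fixed simply connected subregion of $\bbC \setminus \{0\}$ on which a continuous branch of $\arg$ is fixed (matching the branch used for $\widehat{T}$), so that the derivative formulas above are valid and the convergence of $\Ar$ is genuinely the convergence of the composed analytic functions rather than an artifact of a jump in the branch cut. Everything else is routine; there is no real obstacle, just bookkeeping with uniform estimates on a compact set.
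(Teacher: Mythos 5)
Your proof is correct; the only methodological divergence from the paper is in how you establish uniform convergence of $\widehat{T}_n^{(j)}\to\widehat{T}^{(j)}$ on compacts. You differentiate under the integral sign in the Fourier transform and exploit the fixed compact support $[-\Delta,\Delta]$: since $|(2\bfi y)^j e^{2\bfi zy}|\le(2\Delta)^j$ on the support, the $L^1$ (resp.\ $L^2$ via Cauchy--Schwarz on the fixed bounded interval) convergence of $T_n$ (resp.\ $S_n$) immediately gives $C^2$-uniform convergence of the transforms on all of $\bbR$. The paper instead works purely at the level of entire functions: it starts from uniform convergence of $f_n=\widehat T_n$ on compact subsets of $\bbC$ and uses Cauchy's differentiation formula to bootstrap to uniform convergence of $f_n'$ and $f_n''$ on slightly smaller compacts, then takes $(\log f_n)'=f_n'/f_n$ and its derivative and extracts imaginary parts. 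Your route is more elementary and concrete given the Fourier-transform structure and yields convergence of derivatives on all of $\bbR$ rather than just on shrinking compacts; the paper's argument is more generic (it would apply to any uniformly convergent sequence of holomorphic functions, not just transforms of uniformly compactly supported data). Both then handle the nonvanishing on $\cI$ by compactness, and both reduce the $j=0$ branch issue to matching a continuous branch on $\cI$ using the fact that $\widehat T_n$ is uniformly close to $\widehat T$ there; the paper does this via the explicit integral formula $\log f_n(x)=\log f_n(\min\cI)+\int_{\min\cI}^x f_n'/f_n$, which is a slightly cleaner way to say what you say in your final paragraph. Part~(ii) is handled the same way in both: apply the nonvanishing/uniform-convergence observation to $\widehat T_n(z)-2\bfi z\,\widehat S_n(z)$ (or equivalently to $\Phi_n$, since the factor $(2z)^{-1}$ is bounded above and below on the compact $J\subset(0,\infty)$).
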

\begin{proof}
(i) Denote for simplicity $f_n(z):=\hatt T_n(z)$, $f(z):=\hatt T(z)$. Clearly, $f$ is entire function which is not identically zero and $f_n$ converges to $f$ uniformly on compacts. We claim that there exists $n_0\in\bbN$ such that:
	\begin{enumerate}
		\item for all $n \ge n_0$ and $z\in \cI$, $f_n(z) \neq 0$, 
		\item for $j=0, 1,2$, $\arg f_n^{(j)} \to \arg f^{(j)}$ uniformly on $\cI$ and in particular,
		\[
		\sup_{n\ge n_0} \sup_{z\in J} \lvert \arg f_n^{(j)}(z) \rvert < \infty.
		\]
	\end{enumerate}
To prove these two basics facts from complex analysis, first, recall that if $f_n \to f$ uniformly on some compact $K$, then for any compact $K' \subset \Int K$, $f_n' \to f'$ uniformly on $K'$; this holds by Cauchy's differentiation formula
\[
f_n'(z) = \frac 1{2\pi i} \oint_{\lvert w-z\rvert = \epsilon} \frac{f_n(w)}{(w-z)^2} \,dw
\]
applied with $\epsilon = \dist(K', \bbC \setminus K)$. Next, denote $D = \{ z \in\bbC \mid f(z) = 0 \}$,  $d = \dist(D, \cI) > 0$, and  $\cI_\epsilon := \{ z\in\bbC \mid \dist(z,\cI ) \le \epsilon \}$. 

On the set $\cI_{d/2}$, $f_n$ converge uniformly to $f$, so there exists $n_0$ such that for all $n \ge n_0$ and $z\in \cI_{d/2}$, $f_n(z) \neq 0$. By the above argument, $f_n' \to f'$ uniformly on $\cI_{d/3}$. Thus, $(\log f_n)' = f_n' / f_n \to f' / f = (\log f)'$ uniformly on $\cI_{d/3}$. Thus, $(\log f_n)'' \to (\log f)''$ uniformly on $\cI_{d/4}$. Taking imaginary parts we conclude $\arg f_n' \to \arg f'$ and $\arg f_n'' \to \arg f''$ uniformly on $\cI$.  Choosing branches so that $\log f_n(\min\cI) \to \log f(\min\cI)$ and taking limits of
\[
\log f_n(x) = \log f_n(\min\cI) + \int_{\min\cI}^x \frac{f_n'(y)}{f_n(y)} \dd y
\]
and taking imaginary parts shows uniform convergence of $\arg f_n$ to $\arg f$ on $\cI$.

(ii) The proof follows directly from complex analytic facts (1), (2) stated above with $f(z):=\hatt T(z)-2z \bfi  \hatt S(z) $, $f_n(z):=\hatt T_n(z)-2z \bfi  \hatt S_n(z) $. 
\end{proof}

Assuming Hypothesis \ref{sparsepot}, we say that a compact interval $J\subset \bbR_+:=(0,\infty)$ is  $(S, T)-$admissible if $J$ avoids zeros of  $\hatt S(z)$, $\hatt T(z)$, and $(2z)^{-1}\hatt T(z)-\bfi \hatt S(z)$, that is, 
\begin{align}
&J\cap \{ z\in(0,\infty): \hatt T(z)=0 \text{ or }\hatt S(z)=0 \text{ or } (2z)^{-1}\hatt T(z)-\bfi \hatt S(z)=0\}=\emptyset.
\end{align}

 In the following Lemma, we derive a third order expansion for the increment of $\log R(x_n + \Delta,k)$ with respect to $d_n$. For $\{z_n\}_{n\geq 0}\subset \bbC$ we denote  $\delta z_n:=z_n-z_{n-1}$. 
 
\begin{lemma}\lb{lem3.4}Assume Hypothesis \ref{sparsepot}, fix a finite interval $[E_1, E_2]\subset (0,\infty)$ such that $[\sqrt{E_1}, \sqrt{E_2}]$ is $(S,T)-$admissible, and define $Y_n(k) := \log R(x_n + \Delta,k)$. Then the following asymptotic expansion holds uniformly for $k\in[\sqrt{E_1}, \sqrt{E_2}]$ 
\begin{align}
	&\delta Y_n(k)\underset{n\rightarrow\infty}{=}X_n(k)+\wti X_n(k)+\overset{\circ}{X}_n(k)+\cO(d^3_n),
\end{align}	
where the oscillatory terms $X_n, \wti X_n$  and are given by 
\begin{align}
	X_n(k):=&\,\,d_n \int_{-\Delta}^{\Delta} \left[\frac{T_n(y)}{2k}\right]\sin(2{\theta}_n^{(0)}(y))-\left[\frac{d_nT_n(y)}{4k^2}\int_{-\Delta}^yT_n(s)\dd s \right]\cos(2{\theta}_n^{(0)}(y))\,\dd y\lb{i1}\qquad\\
	&\,\,-d_n\int_{-\Delta}^{\Delta}\left[\frac{d_nS_n(y)}{2k}\int_{-\Delta}^y T_n(s)\,\dd s\right]\sin(2{\theta}_n^{(0)}(y)) +S_n(y)\cos(2{\theta}_n^{(0)}(y))\,\dd y\lb{i2}\\
	&\,\,-d_n^2 \int_{-\Delta}^{\Delta} \left[\frac{S_n^2(y)}{2k}\right]\sin(2{\theta}_n^{(0)}(y))\dd y, \lb{i3}\\
	\wti X_n(k):=&\,\,\frac{d_n^2 |\hatt T_n(k)|^2}{8k^2}\cos{(4 \theta_n^{(0)}(0)+4 \phi_n(k))}-\frac{d_n^2 |\hatt S_n(k)|^2}{2}\cos{(4 \theta_n^{(0)}(0)+4 \psi_n(k))}\lb{extil}\\
	&\,\,+\frac{d_n^2}{2k}|\hatt S_n(k)\hatt T_n(k)| \sin(4 \theta_n^{(0)}(0)+2 \psi_n(k)+2 \phi_n(k)),
	\end{align}
	where	\begin{align}\lb{st}
		\begin{split}
			\phi_n(k):=\frac{\Ar(\hatt T_n(k))}{2},\qquad \psi_n(k):=\frac{\Ar(\hatt S_n(k))}{2},
		\end{split}
	\end{align}
	cf. \eqref{arz}, and the non-oscillatory term $\mathring X_n$ is given by
	\begin{align}
\overset{\circ}{X}_n(k):=&\frac{d_n}2 \left|\frac{\hatt T_n(k)}{2k}-\bfi \hatt S_n(k)\right|^2,\  n\geq 1.\lb{xhat}
\end{align}
\end{lemma}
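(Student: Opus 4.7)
The plan is to integrate the Pr\"ufer radial equation \eqref{ODE2} across the $n$-th kick interval, Taylor-expand the trigonometric factors using the first-order asymptotics of $\theta$ supplied by Lemma \ref{thetalem}, and then decompose the resulting double integrals into single-frequency oscillatory ($X_n$), double-frequency oscillatory ($\wti X_n$), and non-oscillatory ($\overset{\circ}{X}_n$) pieces via product-to-sum identities and the Fourier transform.

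Since $\sigma, \tau$ both vanish on $[x_{n-1}+\Delta, x_n-\Delta]$, the function $Y_n(k)$ is constant there, and after the substitution $x = x_n + y$ we have
\begin{equation*}
\delta Y_n = \int_{-\Delta}^{\Delta} \left[ \frac{d_n T_n(y) - d_n^2 S_n^2(y)}{2k}\sin(2\theta(x_n+y)) - d_n S_n(y)\cos(2\theta(x_n+y)) \right] dy.
\end{equation*}
Lemma \ref{thetalem} together with Taylor's theorem gives
$\sin(2\theta(x_n+y)) = \sin(2\theta_n^{(0)}(y)) + 2d_n\theta_n^{(1)}(y)\cos(2\theta_n^{(0)}(y)) + \cO(d_n^2)$
and the analogous formula for cosine; the $\cO(d_n^2)$ remainders become $\cO(d_n^3)$ after multiplication by the outer $d_n$-prefactor, with uniformity in $k \in [\sqrt{E_1},\sqrt{E_2}]$ secured by Lemma \ref{argANDradEst} and the $L^1, L^2$ bounds from \eqref{tnsn}.

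Substituting the explicit formula \eqref{thetonenew} for $\theta_n^{(1)}$ and using $\sin^2\alpha = \tfrac{1}{2}(1-\cos 2\alpha)$ separates the $d_n^2$ contributions into two families: (i) pieces whose $s$-integrand reduces to $\int_{-\Delta}^y T_n(s)\,ds$, a function of $y$ alone; these are single-frequency oscillatory in $\Theta_n := \theta_n^{(0)}(0)$ and, together with the order-$d_n$ terms and the direct $S_n^2$ contribution, assemble into $X_n$; and (ii) genuine double integrals over $\{s\leq y\}$ of products $\mathrm{trig}(2\theta_n^{(0)}(y))\cdot\mathrm{trig}(2\theta_n^{(0)}(s))$. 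For family (ii), applying $\theta_n^{(0)}(y) = \Theta_n + ky$ together with product-to-sum identities such as
\begin{equation*}
\cos(2\theta_n^{(0)}(y))\cos(2\theta_n^{(0)}(s)) = \tfrac{1}{2}\bigl[\cos(2k(y-s)) + \cos(4\Theta_n + 2k(y+s))\bigr]
\end{equation*}
splits each double integral into a difference-mode piece (independent of $\Theta_n$) and a sum-mode piece (oscillating in $4\Theta_n$). Symmetrizing the triangular region to the full square $[-\Delta,\Delta]^2$ and applying $\iint f(y)g(s) e^{2ik(y\pm s)}\,dy\,ds = \hatt f(k)\,\overline{\hatt g(\mp k)}$ expresses everything in terms of $|\hatt T_n(k)|, |\hatt S_n(k)|$ and $2\phi_n(k), 2\psi_n(k)$.

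The sum-mode contributions then assemble into the three trigonometric terms of $\wti X_n$, while the difference-mode contributions—the two positive squares $\tfrac{d_n^2|\hatt T_n|^2}{8k^2}$ and $\tfrac{d_n^2|\hatt S_n|^2}{2}$ together with a cross term proportional to $\Im[\hatt T_n\overline{\hatt S_n}] = |\hatt T_n||\hatt S_n|\sin(2\phi_n-2\psi_n)$—recombine into $\overset{\circ}{X}_n \propto \bigl|\hatt T_n(k)/(2k) - \bfi\hatt S_n(k)\bigr|^2$. The main obstacle will be the bookkeeping of the several $d_n^2$ contributions and, in particular, recognizing the non-oscillatory cross term as collapsing into a perfect modulus squared—this is the structural feature that will later drive $R(x_n + \Delta, k) \to \infty$ when $\sum d_n^2 = \infty$.
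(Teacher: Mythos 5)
Your proposal is correct and follows essentially the same route as the paper: integrate \eqref{ODE2} over the kick interval, insert the second-order expansion of $\theta$ from Lemma \ref{thetalem}, keep the $\int_{-\Delta}^y T_n(s)\,\dd s$ pieces in $X_n$, and convert the remaining triangular double-trig integrals into $\wti X_n$ and $\overset{\circ}{X}_n$ via Fourier transforms of $S_n, T_n$. The only (inessential) difference is ordering: the paper first symmetrizes the triangular integrals into the perfect square $\bigl(\int \sigma_n\sin(2\theta_n^{(0)})+\int q_n\cos(2\theta_n^{(0)})\bigr)^2$ using the Fubini identity of Remark \ref{propap1} and then expands, whereas you apply product-to-sum identities first and symmetrize afterwards — algebraically the same computation.
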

\begin{proof}
	Integrating both sides of \eqref{ODE2} over the interval $[x_n-\Delta, x_n+\Delta]$ we get 
	\begin{align}\lb{delyn}
	\delta Y_n(k)& = \int_{-\Delta}^{\Delta}q_n(s)\sin(2{\theta}(x_n + s))-\sigma_n(s)\cos(2{\theta}(x_n + s))\,\dd s.
	\end{align}
Combining \eqref{eqlem33} and Taylor expansions for $\sin$, $\cos$ near $2\theta_n^{(0)}(s)$,
	\begin{align}
		\begin{split}\lb{2ndord}
				\sin(2\theta(x_n + s)) & \underset{n\rightarrow\infty}{=} \sin(2{\theta}^{(0)}_n(s)) + 2d_n\theta_n^{(1)}(s)\cos(2{\theta}^{(0)}_n(y)) + \cO(d_n^2)\,, \\
			\cos(2\theta(x_n + s)) &  \underset{n\rightarrow\infty}{=} \cos(2{\theta}^{(0)}_n(s)) -2d_n\theta_n^{(1)}(s)\sin(2{\theta}^{(0)}_n(s))  + \cO(d_n^2),
		\end{split}
	\end{align}
uniformly for $s\in[-\Delta, \Delta]$. Replacing the trigonometric terms in \eqref{delyn} by their second-order approximations \eqref{2ndord}, one infers
	\begin{align}
		&\delta Y_n(k) \underset{n\rightarrow\infty}{=}  \int_{-\Delta}^{\Delta}q_n(y)\sin(2{\theta}_n^{(0)}(y))-\sigma_n(y)\cos(2{\theta}_n^{(0)}(y))\,\dd y, \\
		& +2\int_{-\Delta}^{\Delta}d_n\theta_n^{(1)}(y)q_n(y)\cos(2{\theta}^{(0)}_n(y))+d_n\theta_n^{(1)}(y)\sigma_n(y)\sin(2{\theta}^{(0)}_n(y))\dd y +\cO(d_n^3),\lb{dynk}
	\end{align}
where the last cubic term was obtained by combining the linear \eqref{qnest} and the quadratic  \eqref{2ndord} asymptotic formulas.
In order to facilitate integration by parts in the subsequent argument, let us rewrite the terms in \eqref{dynk} containing $\theta_n^{(1)}$. First, use
the double angle formula to replace  $\sin^2$ term in \eqref{thetonenew}, 
	\begin{align}\lb{tn1}
	\theta^{(1)}_n(y)=\frac{1}{d_n}\int_{-\Delta}^y\Big[\sigma_n(s)\sin(2{\theta}_n^{(0)}(s))+q_n(s)\cos(2{\theta}_n^{(0)}(s))\Big]-q_n(s)\,\dd s.
\end{align}
Then, substitute this identity into the first term under the integral in \eqref{dynk} to get
	\begin{align}
		\begin{split}\lb{vsp1}
		&\int_{-\Delta}^{\Delta}d_n\theta_n^{(1)}(y)q_n(y)\cos(2{\theta}^{(0)}_n(y))\dd y=-\int_{-\Delta}^{\Delta}\Big[\int_{-\Delta}^yq_n(s)\dd s\Big]\ q_n(y)\cos(2{\theta}^{(0)}_n(y))\dd y\\
		&+\int_{-\Delta}^{\Delta}\Big[\int_{-\Delta}^y\sigma_n(s)\sin(2{\theta}_n^{(0)}(s))+q_n(s)\cos(2{\theta}_n^{(0)}(s))\,\dd s\Big] q_n(y)\cos(2{\theta}^{(0)}_n(y))\dd y
		\end{split}
\end{align}
and similarly, substitute \eqref{tn1} into the second  term under the same integral to get
\begin{align}
\begin{split}\lb{vsp2}
&\int_{-\Delta}^{\Delta}d_n\theta_n^{(1)}(y)\sin(2{\theta}^{(0)}_n(y))\sigma_n(y)\dd y
=-\int_{-\Delta}^{\Delta}\Big[\int_{-\Delta}^yq_n(s)\,\dd s\Big]\sigma_n(y)\sin(2{\theta}^{(0)}_n(y))\dd y\\
&+\int_{-\Delta}^{\Delta}\Big[\int_{-\Delta}^y\sigma_n(s)\sin(2{\theta}_n^{(0)}(s))+q_n(s)\cos(2{\theta}_n^{(0)}(s))\dd s\Big]\sigma_n(y)\sin(2{\theta}^{(0)}_n(y))\dd y.
\end{split}
\end{align}
Returning to $\delta Y_n(k)$, we plug \eqref{vsp1}, \eqref{vsp2} in \eqref{dynk}, use \eqref{vspint} with $f=\sigma_n(y)\sin(2{\theta}^{(0)}_n(y))$ and $g=q_n(s)\cos(2{\theta}_n^{(0)}(s))$; then, we obtain
\begin{align}
\begin{split}\lb{3line}
\delta Y_n(k) \underset{n\rightarrow\infty}{=} & \int_{-\Delta}^{\Delta}\left(q_n(y)-\int_{-\Delta}^yq_n(s)\,\dd s\,\sigma_n(y)\right)\sin(2{\theta}_n^{(0)}(y))\dd y\\
& -\int_{-\Delta}^{\Delta}\left(\int_{-\Delta}^yq_n(s)\dd s \ q_n(y)+\sigma_n(y)\right)\cos(2{\theta}_n^{(0)}(y))\,\dd y\\
&\left(\int_{-\Delta}^{\Delta}\sigma_n(y)\sin(2{\theta}^{(0)}_n(y))dy+\int_{-\Delta}^{\Delta}q_n(y)\cos(2{\theta}_n^{(0)}(y))dy\right)^2+ \cO(d_n^3).
\end{split}
\end{align}

Next, denote the quadratic term above by $L$ and note that $\theta_n^{(0)}(y)=\theta_n^{(0)}(0)+ky$; then,
\begin{align}
&L=\left(d_n\im e^{2\bfi \theta_n^{(0)}(0)}\hatt S_n(k)+\frac{d_n}{2k}\re e^{2\bfi \theta_n^{(0)}(0)} \hatt T_n(k)\right)^2\\
&=\left(d_n\im e^{2\bfi \theta_n^{(0)}(0)+2\bfi \psi_n(k)}|\hatt S_n(k)|+\frac{d_n}{2k}\re e^{2\bfi \theta_n^{(0)}(0)+2\bfi \phi_n(k)}|\hatt T_n(k)|\right)^2\\
&=\left(d_n \sin{(2 \theta_n^{(0)}(0)+2 \psi_n(k))}|\hatt S_n(k)|\right)^2+\left(\frac{d_n}{2k}\cos (2 \theta_n^{(0)}(0)+2 \phi_n(k))|\hatt T_n(k)|\right)^2\\
&\quad+\frac{d_n^2}{k}|\hatt S_n(k)\hatt T_n(k)| \sin{(2 \theta_n^{(0)}(0)+2 \phi_n(k))} \cos (2 \theta_n^{(0)}(0)+2 \phi_n(k))\\
&=\frac{d_n^2 |\hatt S_n(k)|^2}{2}-\frac{d_n^2 |\hatt S_n(k)|^2\cos{(4 \theta_n^{(0)}(0)+4 \psi_n(k))}}{2}\\
&\quad +\frac{d_n^2 |\hatt T_n(k)|^2}{8k^2}+\frac{d_n^2 |\hatt T_n(k)|^2\cos{(4 \theta_n^{(0)}(0)+4 \phi_n(k))}}{8k^2}\\
&\quad +\frac{d_n^2}{k}|\hatt S_n(k)\hatt T_n(k)| \sin{(2 \theta_n^{(0)}(0)+2 \psi_n(k))} \cos (2 \theta_n^{(0)}(0)+2 \phi_n(k)).
\end{align}
To conclude the derivation, we plug the above expression for $L$ in \eqref{3line}, expand $q_n, \sigma_n$ in terms of $d_n, S_n, T_n$ and combine the third order terms (with respect to $d_n$ as $n\rightarrow\infty$) with $\cO(d_n^3)$.
\end{proof}
\begin{remark}\lb{propap1}
Suppose that $f,g\in L^{1}(-\Delta, \Delta)$, then
	\begin{align}\lb{vspint}
	\begin{split}
	\frac12\left(\int_{-\Delta}^{\Delta}f(y)dy+\int_{-\Delta}^{\Delta}g(y)dy\right)^2&=\int_{-\Delta}^{\Delta}f(y)\int_{-\Delta}^{y}g(s)ds\,dy+\int_{-\Delta}^{\Delta}g(y)\int_{-\Delta}^{y}f(s)ds\,dy\\
	& + \int_{-\Delta}^{\Delta}f(y)\int_{-\Delta}^{y}f(s)ds\,dy+\int_{-\Delta}^{\Delta}g(y)\int_{-\Delta}^{y}g(s)ds\,dy.
	\end{split}
	\end{align}
	This identity follows from 
	\begin{equation}\lb{vspint1}
	\int_{-\Delta}^{\Delta}f(s)ds\int_{-\Delta}^{\Delta}g(y)dy=\int_{-\Delta}^{\Delta}f(y)\int_{-\Delta}^{y}g(s)ds\,dy+\int_{-\Delta}^{\Delta}g(y)\int_{-\Delta}^{y}f(s)ds\,dy,
	\end{equation}
	which is derived by changing the order of integration in the first integral on the right-hand side of \eqref{vspint1}.
\end{remark}

\begin{lemma}\lb{lem3.5} Recall $Y_n, \mathring X_n$ from Lemma \ref{lem3.4} and define 
\begin{equation}\lb{qns}
Q_n(k):=Y_n(k)-\sum_{m=1}^{n}\overset{\circ}{X}_m(k).
\end{equation}
Then for arbitrary non-negative $g\in C_0^{\infty}(0,\infty)$ with $\supp(g)\subset J$ for a  $(S, T)-$admissible interval $J$ we have
\begin{equation}
\lim\limits_{n\rightarrow\infty}\frac{\int\limits_{0}^{\infty}g(k)|Q_n(k)|dk}{\sum\limits_{m=1}^{n}d_m^2}=0.
\end{equation}
\end{lemma}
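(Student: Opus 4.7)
The plan is to telescope the expansion of Lemma~\ref{lem3.4}: summing $\delta Y_m = X_m + \wti X_m + \overset{\circ}{X}_m + \cO(d_m^3)$ over $m = 1, \ldots, n$ and recalling the definition of $Q_n$, one obtains
\[
Q_n(k) = \Sigma_n(k) + E_n(k), \qquad \Sigma_n(k) := \sum_{m=1}^n \bigl( X_m(k) + \wti X_m(k) \bigr),
\]
where $|E_n(k)| \leq C\bigl(1 + \sum_{m=1}^n d_m^3\bigr)$ uniformly on $\supp g$ (the additive constant absorbs the finitely many small-$m$ contributions for which the $\cO(d_m^3)$ bound has not yet been activated). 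The error contributes $\int g\,|E_n|\,dk \leq C\bigl(1 + \sum_{m=1}^n d_m^3\bigr)$, and since $d_m \to 0$ the Stolz--Ces\`aro theorem gives $\sum_m d_m^3 = \oh\bigl(\sum_m d_m^2\bigr)$ in the relevant regime $\sum d_m^2 \to \infty$.

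For the oscillatory sum I would apply the Cauchy--Schwarz inequality
\[
\int g(k)\,|\Sigma_n(k)|\,dk \leq \|g\|_{L^1}^{1/2} \Bigl(\int g(k)\,|\Sigma_n(k)|^2\,dk\Bigr)^{1/2},
\]
and reduce the problem to the $L^2$ bound $\int g\,|\Sigma_n|^2\,dk \leq C\sum_{m=1}^n d_m^2 + C$, which together with Cauchy--Schwarz would yield $\int g\,|\Sigma_n|\,dk \leq C\sqrt{\sum_{m=1}^n d_m^2 + 1}$, and this is $\oh\bigl(\sum d_m^2\bigr)$ in the same regime. Expanding $|\Sigma_n|^2 = \sum_{m,l}(X_m + \wti X_m)(X_l + \wti X_l)$, the diagonal $m = l$ is immediately bounded by $C\sum_{m=1}^n d_m^2$ via the uniform pointwise estimates $|X_m|\leq Cd_m$ and $|\wti X_m|\leq Cd_m^2$ readable from Lemma~\ref{lem3.4}; the main task is therefore to bound the off-diagonal sum uniformly in $n$.

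The off-diagonal estimate is the main technical obstacle. Using $\theta_m^{(0)}(y) = \theta_m^{(0)}(0) + ky$, the integrals in \eqref{i1}--\eqref{i3} reduce to Fourier transforms of $T_m, S_m$, so that $X_m(k) = d_m\,\im\bigl[\Phi_m(k)\,e^{2\bfi\theta_m^{(0)}(0)}\bigr] + \cO(d_m^2)$ with $\Phi_m(k) := (2k)^{-1}\hatt T_m(k) - \bfi\,\hatt S_m(k)$, and $\wti X_m$ admits a similar representation of order $\cO(d_m^2)$ with phases $\pm 4\theta_m^{(0)}(0)$. Each cross integral $\int g\,(X_m + \wti X_m)(X_l + \wti X_l)\,dk$ is thus a finite sum of oscillatory integrals $\int g(k)\,A_{m,l}(k)\,e^{\bfi\psi_{m,l}(k)}\,dk$, in which the phase $\psi_{m,l}$ is a nonzero integer combination of $2\theta_m^{(0)}(0)$ and $2\theta_l^{(0)}(0)$ adjusted by the slowly varying arguments of $\hatt T_m, \hatt S_m$; Lemma~\ref{lem312}(i) guarantees that $A_{m,l}$ together with finitely many of its $k$-derivatives is uniformly bounded on $\supp g$. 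By \eqref{thetatilde} combined with Lemma~\ref{argANDradEst}(i), $|\psi_{m,l}'(k)| \geq c\,x_{\max(m,l)}$ for $m \neq l$ sufficiently large, while Lemma~\ref{argANDradEst}(ii) gives $|\psi_{m,l}''(k)| \leq C\,x_{\max(m,l)-1}^2$. Iterated integration by parts in $k$ then produces decay in $x_{\max(m,l)}$; combined with the super-exponential spacing $x_n/x_{n+1}\to 0$, the geometric lower bound $x_n \geq C\beta^n$, and the $\ell^2$-type convolution estimates collected in Remark~\ref{propap12}, this yields $\sum_{m\neq l}|d_m d_l|\cdot[\text{cross-term bound}] \leq C$ uniformly in $n$. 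Combining with the first two paragraphs and dividing by $\sum_{m=1}^n d_m^2$ then completes the proof as $n\to\infty$.
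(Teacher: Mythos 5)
Your first two steps (telescoping Lemma~\ref{lem3.4} plus Stolz--Ces\`aro for the $\cO(d_m^3)$ errors, and Cauchy--Schwarz to reduce to an $L^2(g\,dk)$ bound on the oscillatory partial sums) match the paper's strategy. The gap is in the off-diagonal estimate, which is exactly the heart of the lemma and which you assert rather than prove. After one integration by parts in $k$, the term in which $\partial_k$ hits $1/\psi_{m,l}'$ is controlled only by $|\psi_{m,l}''|/|\psi_{m,l}'|^2 \lesssim \min\{x_{l-1}^2,\,1+\sum_{j<l}d_jx_j^2\}/x_l^2$ (with $l=\max(m,l)$), by \eqref{thetone}(ii) and \eqref{thetatilde}; no third-order phase bounds are available in Lemma~\ref{argANDradEst}, so ``iterated'' integration by parts is not at your disposal, and in any case each pass reproduces this same non-small factor. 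If you now bound every cross term in absolute value and sum over pairs, you get $\sum_{l\le n} d_l\bigl(\sum_{m<l}d_m\bigr)\min\{x_{l-1}^2,\,1+\sum_{j<l}d_jx_j^2\}/x_l^2$, and this can be huge: take $d_m=m^{-1/4}$ and a sparse sequence with $x_{l-1}/x_l=1/\log l$ (admissible under Hypothesis~\ref{sparsepot}); then the pairwise sum grows like $n^{5/4}(\log n)^{-2}$, which exceeds not only your claimed bound $C\sum_{m\le n}d_m^2+C\sim n^{1/2}$ but even the weaker bound $o\bigl((\sum_{m\le n}d_m^2)^2\bigr)=o(n)$ that your Cauchy--Schwarz reduction actually needs. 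So the proposal's key claim is unsupported, and the pairwise absolute-value scheme cannot deliver it in general: the cancellation among the previously accumulated terms $X_1,\dots,X_{n-1}$ must be retained, not discarded.

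The paper's proof keeps exactly that cancellation: setting $B_n=\int g\,|\sum_{m\le n}X_m|^2dk$ (and $\wti B_n$ for the $\wti X_m$), it writes $B_n\le B_{n-1}+2|\int g\,M_{n-1}X_n\,dk|+\int g\,|X_n|^2dk$, integrates by parts only in the newest oscillatory factor, and in the dangerous term (where $\partial_k$ lands on $1/\partial_k\theta_n^{(0)}$) bounds $\int g\,|M_{n-1}|\,dk$ by $\sqrt{B_{n-1}}$ via Cauchy--Schwarz rather than by $\sum_{m<n}d_m$. This yields the recursion $B_n\le B_{n-1}+2\alpha_n\sqrt{B_{n-1}}+\beta_n$, which is closed by \cite[Lemma 6.2]{MR1628290} together with the summability facts of Remark~\ref{propap121}, giving only $\sqrt{B_n}=o\bigl(\sum_{m\le n}d_m^2\bigr)$ --- weaker than your target $B_n\lesssim\sum d_m^2$, but sufficient. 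To repair your argument you would need either this recursive structure (or some other mechanism exploiting cancellation inside the partial sums), not a termwise estimate; note also that, like the paper, you must work in the regime $\sum_m d_m^2=\infty$ of Theorem~\ref{sparseAltnewtheorem}(b), since Remark~\ref{propap121} and your Stolz step both use it.
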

\begin{proof} Setting $Q_0=X_0=\wti X_0=\overset{\circ}{X}_0=0$, we note that
\begin{equation}\lb{mainlim}
Q_n(k)= \sum_{m=1}^n\delta Q_n(k) = \sum_{m=1}^n \left(X_m(k) + \wti {X}_m(k) + \cO(d_n^3)\right). 
\end{equation}
Define
\begin{align}
B_n:=\int\limits_{0}^{\infty}g(k)\Big|\sum\limits_{m=1}^{n}X_m(k)\Big|^2dk,\qquad \wti B_n:=\int\limits_{0}^{\infty}g(k)\Big|\sum\limits_{m=1}^{n}\wti X_m(k)\Big|^2dk;
\end{align}
then, by Cauchy--Schwarz inequality in $L^2(\bbR_+, dk)$, 
\begin{equation}
\int\limits_{0}^{\infty}g(k)|Q_n(k)|dk\leq\|\sqrt{g}\|_{L^2(\bbR_+)}\left(\sqrt{ B_n}+\sqrt{\wti B_n}\right)+ \cO(d_n^3).
\end{equation}
Following the proof of \cite[Theorem 1.6]{MR1628290}, we notice that by Stolz lemma (the discrete version of L'Hospital's rule), $\sum_{m=1}^{n}d_m^3/\sum_{m=1}^{n}d_m^2\rightarrow 0$ as $n\rightarrow\infty$; hence, in order to show \eqref{mainlim}, it suffices to prove
\begin{equation}\lb{bbtil}
\sqrt{{B}_n}\bigg/ \sum_{m=1}^nd_m^2 \to 0,\qquad \sqrt{\wti B_n}\bigg/ \sum_{m=1}^nd_m^2 \to 0,\qquad  n\rightarrow\infty. 
\end{equation}

To derive the first limit, recall $X_n$ from Lemma \ref{lem3.4} and denote the integral terms in \eqref{i1}, \eqref{i2}, \eqref{i3} by $U_n, V_n, Z_n$ respectively; thus, $X_n=d_n U_n-d_n V_n-d_n^2Z_n$. Put $M_{n-1}(k) := \sum_{m=1}^{n-1}X_m(k)$; then,
\begin{align}
	&B_n  \leq B_{n-1} +  \int g(k)|X_n(k)|^2dk\lb{lster}\\
	&\quad +2\left|\int g(k) M_{n-1}(k) d_nU_n(k)\,\dd k\right| + 2\left|\int g(k) M_{n-1}(k) d_nV_n(k)\,\dd k\right| \lb{gm1}\\
&\quad + 2\left|\int g(k) M_{n-1}(k) d_n^2Z_n(k)\,\dd k\right|  \lb{gm2}
\end{align}
Note that  $U_n, V_n, Z_n$ contain $\sin(2\theta^{(0)}_n(y))$, $\cos(2\theta^{(0)}_n(y))$ terms which we split in \eqref{gm1}, \eqref{gm2} using the triangle inequality. The resulting  terms are of the form
\begin{equation}\lb{cen1}
	\int\int_{-\Delta}^{\Delta} \gamma_n w_n(y) \Psi(k) M_{n-1}(k)u(2\theta^{(0)}_n(y))\dd y\,\dd k , 
\end{equation}
with $\gamma_n, w_n, \Psi, u$ as in \eqref{cen2}. As in the proof of Theorem \ref{sparseAltnewtheorem}(a), rewrite this quantity as 
\begin{equation}
	\int_{-\Delta}^{\Delta} \gamma_n w_n(y)\int \Psi(k) M_{n-1}(k) \frac{1}{2\frac{\partial \theta^{(0)}_n}{\partial k}(y)} \frac{\partial v}{\partial k}(2\theta^{(0)}_n(y))\,\dd k \,\dd y 
\end{equation}
where $v$ is either $\sin$ or $\cos$ so that $u=v'$. Next, integrate by parts with respect to $k$ and obtain three integrals, each corresponding to applying $\partial_k$ to one of the three functions in
\begin{equation}\lb{prod1}
	\Psi(k)\,\cdot\, M_{n-1}(k) \,\cdot\,\frac{1}{2\frac{\partial \theta^{(0)}_n}{\partial k}(y)}.
\end{equation}

{\it Case 1:} $\partial_k$ lands on the first term in \eqref{prod1}. In this case,
\begin{equation}\lb{en1}
\left|\int_{-\Delta}^{\Delta} \gamma_n w_n(y)\int \frac{\partial\Psi(k)}{\partial k} M_{n-1}(k) \frac{1}{2\frac{\partial \theta^{(0)}_n}{\partial k}(y)}  v(2\theta^{(0)}_n(y))\,\dd k \,\dd y \right|\leq \frac{Cd_n(n-1)}{x_n},
\end{equation}
 where we used \eqref{vsink} and $M_{n-1}(k)\leq C(n-1).$
 
{\it Case 2:} $\partial_k$ lands on the second term in \eqref{prod1}. Then,
\begin{align}\lb{en2}
\left|\int_{-\Delta}^{\Delta} \gamma_n w_n(y)\int \Psi(k) \frac{\partial M_{n-1}(k)}{\partial k} \frac{1}{2\frac{\partial \theta^{(0)}_n}{\partial k}(y)} v(2\theta^{(0)}_n(y))\,\dd k \,\dd y\right|\leq \frac{Cd_n}{x_n}\sum_{m=1}^{n-1}x_m\omega_m
\end{align}
where   and we used \eqref{vsink} and
 \begin{align}
	&  \frac{\partial M_{n-1}(k)}{\partial k}\leq \sum_{m=1}^{n-1}x_md_m.
\end{align}

{\it Case 3:} $\partial_k$ lands on the third term in \eqref{prod1}. We first replace $\Psi$ by $\frac{\Psi}{g}g$ and then estimate
\begin{align}
	\begin{split}\lb{en3}
	&\left|\int_{-\Delta}^{\Delta} \gamma_n w_n(y)\int g(k) M_{n-1}(k)\frac{\Psi(k)}{g(k)} \frac{1}{\left(\frac{\partial \theta^{(0)}_n}{\partial k}(y)\right)^2}\frac{\partial^2 \theta^{(0)}_n(y)}{\partial^2 k}v(2\theta^{(0)}_n(y))  \dd k \,\dd y\right|\\
	&\leq \frac{Cd_n}{x_n^2}\left(1+\sum_{m=1}^{n-1}d_mx_m^2\right)\left|\int g(k)  M_{n-1}(k)dk\right|\\
	&\leq \frac{Cd_n}{x_n^2}\left(1+\sum_{m=1}^{n-1}d_mx_m^2\right)\left(\int g(k)  |M_{n-1}(k)|^2dk\right)^{1/2}=: \alpha_n\sqrt{B_{n-1}}\,,
	\end{split}
\end{align}
where in the second to last inequality, we used \eqref{vsink} and 
 \begin{align}
	&  \frac{\partial^2 \theta^{(0)}_n(y)}{\partial^2 k}\leq C \left(1+\sum_{m=1}^{n-1}x_m\omega_m\right),\qquad\text{\ by \eqref{thetone}};
\end{align}
in the last inequality, we used the Cauchy--Schwarz inequality in $L^2(\bbR_+, dk)$ and denoted
\begin{equation}\lb{aln}
\alpha_n:=\frac{Cd_n}{x_n^2}\left(1+\sum_{m=1}^{n-1}d_mx_m^2\right).
\end{equation}

We are now ready to derive the first limit in \eqref{bbtil}: combine \eqref{gm1}, \eqref{gm2}, \eqref{en1}, \eqref{en2}, \eqref{en3}, and estimate the last term in \eqref{lster} from above by $Cd_n^2$, we have
\begin{align}\lb{inebns}
B_n\leq B_{n-1}+2\alpha_n\sqrt{B_{n-1}}+\beta_n,
\end{align}
where $\alpha_n$ is as in \eqref{aln} and 
\begin{equation}
\beta_n:=C\left(\frac{d_n(n-1)}{x_n}+\frac{d_n}{x_n}\sum_{m=1}^{n-1}x_md_m+d_n^2\right).
\end{equation}
Then, \eqref{inebns} together with \cite[Lemma 6.2]{MR1628290} yields
\begin{equation}\lb{bes}
\sqrt{B_n}\leq \sqrt{B_{0}}+\sum_{m=1}^{n}\alpha_m+\left(\sum_{m=1}^{n}\beta_m\right)^{1/2}.
\end{equation}
Consequently, the first limit in \eqref{bbtil} holds as asserted due to 
\begin{equation}\lb{alphabeta}
\sum_{m=1}^{n}\alpha_m\bigg/\sum_{m=1}^{n}d_m^2\rightarrow 0, \qquad \left(\sum_{m=1}^{n}\beta_m\right)^{1/2}\bigg/\sum_{m=1}^{n}d_m^2\rightarrow 0,\qquad n\rightarrow\infty,
\end{equation}
these two limits are discussed in Remark \ref{propap121} below.

Let us now derive the second limit in \eqref{bbtil}. First, we write $\wti X_n=d_n^2\wti U_n+d_n^2\wti V_n+d_n^2\wti Z_n$
where $\wti U_n, \wti V_n, \wti Z_n$ denote $k$-dependent functions in \eqref{extil}. Then, denoting $\wti M_{n-1}(k) := \sum_{m=1}^{n-1}\wti X_m(k)$, we obtain
 \begin{align}
 	\wti B_n  &\leq \wti B_{n-1} +  \int g(k)|X_n(k)|^2\dd k\lb{gm2new2}\\
 	&\quad+2\left|\int g(k) \wti M_{n-1}(k) d_n^2\wti U_n(k)\,\dd k\right| + 2\left|\int g(k) \wti M_{n-1}(k) d_n^2\wti V_n(k)\,\dd k\right| \lb{gm1new}\\
 	&\quad+ 2\left|\int g(k) \wti M_{n-1}(k) d_n^2\wti Z_n(k)\,\dd k\right|\lb{gm2new}.
 \end{align}
Note that  all three terms in \eqref{gm1new}, \eqref{gm2new} are of the form
\begin{equation}\lb{cen1new}
 \int \gamma_n \Psi(k)   \wti M_{n-1}(k)u(\mu_n(y))\,\dd k \,\dd y , 
\end{equation}
with $\Psi\in C_0^{\infty}(0,\infty)$, $\gamma_n:=d_n^2$ and
\begin{align}
&\mu_n(k)\in\{4 \theta_n^{(0)}(0)+2 \psi_n(k)+2 \phi_n(k),\,\, 4 \theta_n^{(0)}(0)+4 \psi_n(k),\,\, 4 \theta_n^{(0)}(0)+4 \phi_n(k)\}.
\end{align}
Using \eqref{thetatilde}, \eqref{thetone}(ii),  and Lemma \ref{lem312} (i) we get
\begin{align}
\frac{\partial \mu_n(k)}{\partial k} &> C x_n, \lb{dmu}\\
	\left|\frac{\partial^2\mu_n(k)}{\partial k^2} \right| &\leq C\left(1 + \sum_{m=1}^n d_m x_m^2 \right).\lb{ddmu}
\end{align} 
As in the first part of the proof, we proceed by rewriting \eqref{cen1new} in the form
\begin{equation}
	\int\gamma_n  \Psi(k) \wti M_{n-1}(k) \frac{1}{\frac{\partial \mu_n(k)}{\partial k}} \frac{\partial v}{\partial k}(\mu_n(k))\,\dd k,
\end{equation}
and integrating by parts with respect to $k$. This approach, as before, leads to three integrals, each corresponding to applying $\frac{\partial}{\partial k}$ to one of the three functions in
\begin{equation}\lb{prod1new}
	\Psi(k)\,\cdot\,\wti M_{n-1}(k) \,\cdot\,\frac{1}{\frac{\partial \mu_n(k)}{\partial k}}.
\end{equation}

{\it Case 1:} $\partial_k$ lands on the first term of \eqref{prod1new}. In this case,
\begin{align}
\Big|\int\gamma_n \frac{\partial \Psi(k)}{\partial k}  \wti M_{n-1}(k) \frac{1}{\frac{\partial \mu_n(k)}{\partial k}} v(\mu_n(k))\,\dd k\Big|\leq  \frac{C d_n(n-1)}{x_n} 
\end{align}
where we used $ \wti M_{n-1}(k)\leq C(n-1)$ and
\begin{align}\lb{vsink2}
	& \frac{1}{\frac{\partial \mu_n(k)}{\partial k}}\leq \frac{C}{x_n}\text{\ by\ }\eqref{dmu},\qquad \Psi \in C^{\infty}_0(0,\infty),\qquad |v(2\theta^{(0)}_n(y))|\leq 1. 
\end{align}

{\it Case 2:} $\partial_k$ lands on the second term of \eqref{prod1new}. In this case,
\begin{align}
\Big|\int\gamma_n \Psi(k)\frac{\partial \wti M_{n-1}(k)}{\partial k}  \frac{1}{\frac{\partial \mu_n(k)}{\partial k}} v(\mu_n(k))\,\dd k\Big|\leq\frac{Cd_n}{x_n}\sum_{m=1}^{n-1}x_md_m
\end{align}
where  we  used \eqref{vsink2} and
\begin{align}
&  \frac{\partial \wti M_{n-1}(k)}{\partial k}\leq \sum_{m=1}^{n-1}x_m d_m.
\end{align}

{\it Case 3:} $\partial_k$ lands on the third term of \eqref{prod1new}. We first replace $\Psi$ by $\frac{\Psi}{g}g$ and then estimate
\begin{align}
\begin{split}\lb{en3new}
&\left|\gamma_n \int g(k) \wti M_{n-1}(k)\frac{\Psi(k)}{g(k)} \frac{1}{\left(\frac{\partial \mu_n(k)}{\partial k}\right)^2}\frac{\partial^2 \mu_n(k)}{\partial k^2}v( \mu_n(k))  \dd k \,\dd y\right|\\
&\leq \frac{Cd_n}{x_n^2}\left(1+\sum_{m=1}^{n-1}d_mx_m^2\right)\left|\int g(k)  \wti M_{n-1}(k)dk\right|\\
&\leq \frac{C d_n}{x_n^2}\left(1+\sum_{m=1}^{n-1}d_mx_m^2\right)\left(\int g(k)  |\wti M_{n-1}(k)|^2dk\right)^{1/2}=: \alpha_n\sqrt{\wti B_{n-1}}
\end{split}
\end{align}
where in the second to last inequality, we used \eqref{ddmu} and \eqref{vsink2}; in the last inequality, we used the Cauchy--Schwarz inequality in $L^2(\bbR_+, dk)$ and the notation \eqref{aln}. 

Combining Cases 1-3, we get a version of \eqref{bes} with $B$ replaced by $\wti B$. As before, using \cite[Lemma 6.2]{MR1628290} and Proposition \ref{propap12}, we infer the second limit in \eqref{bbtil}.
\end{proof}
\begin{remark}\lb{propap121} Assuming the setting of Theorem \ref{sparseAltnewtheorem}(b). To prove the first limit in \eqref{alphabeta}, recall $\gamma$ from Remark \ref{propap12} and write
\begin{align}
\begin{split}
&\sum_{n=2}^{k} d_n\sum_{m=1}^{n-1}d_m\left(\frac{x_m}{x_n}\right)^2\leq  C\sum_{n=2}^{k} d_n\left(\frac{x_{n-1}}{x_n}\right)^2\sum_{m=1}^{n-1}d_m\left(\frac{x_m}{x_{n-1}}\right)^2\\
&\quad \leq C\sum_{n=2}^{k} d_n\left(\frac{x_{n-1}}{x_n}\right)^2\sum_{m=1}^{n-1}d_m\gamma^{-2|m-n-1|}\leq C\left(\sum_{n=2}^{k} d^2_n\left(\frac{x_{n-1}}{x_n}\right)^4\right)^{1/2} \left(\sum_{n=2}^{k} d^2_n\right)^{1/2}
\end{split}
\end{align}	
where in the last inequality, we used boundedness of the convolution operator, as in Remark \ref{propap12}. Note that 
\begin{equation}
\sum_{n=2}^{k} d^2_n\rightarrow\infty,\quad k\rightarrow\infty;\qquad\text{ and }\qquad\frac{x_{n-1}}{x_n}\rightarrow 0,\quad n\rightarrow \infty;
\end{equation}
thus, first limit in \eqref{alphabeta} holds. To prove the second limit in \eqref{alphabeta}, use \eqref{sumdns} to get
\begin{equation}
\sqrt{\beta_n}\leq C\sqrt{1+\sum_{m=1}^kd_m^2}=o\left(\sum_{n=2}^{k} d^2_n\right),\qquad  k\rightarrow\infty.
\end{equation}
\end{remark}

\begin{lemma}\lb{lem317}
Assume Hypothesis \ref{sparsepot}, fix a finite interval $[E_1, E_2]\subset (0,\infty)$ such that $[\sqrt{E_1}, \sqrt{E_2}]$ is $(S,T)-$admissible. Then  there exists a subsequence  $\{n_j\}_{j\geq 1}$ such that for Lebesgue almost every $k\in[\sqrt{E_1}, \sqrt{E_2}]$ one has
\begin{equation}\lb{rinf}
\lim\limits_{j\rightarrow\infty} R(x_{n_j}+\Delta, k)=\infty.
\end{equation}
\end{lemma}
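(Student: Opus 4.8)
The plan is to assemble the two Pr\"ufer-variable estimates just established. Recall that part (b) of Theorem~\ref{sparseAltnewtheorem} places us in the regime $\sum_{n=1}^{\infty}d_n^2=\infty$, and that $[\sqrt{E_1},\sqrt{E_2}]$ is assumed $(S,T)$-admissible, so that both Lemma~\ref{lem3.5} and Lemma~\ref{lem312}(ii) apply on this interval. Fix once and for all a non-negative $g\in C_0^{\infty}(0,\infty)$ with $\supp g\subset[\sqrt{E_1},\sqrt{E_2}]$ and $g(k)>0$ for $k\in(\sqrt{E_1},\sqrt{E_2})$.

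First I would record a uniform coercivity bound for the non-oscillatory part of $Y_n(k)=\log R(x_n+\Delta,k)$. By Lemma~\ref{lem312}(ii) there exist $c>0$ and $n_0\in\bbN$ such that $\bigl|\tfrac{\hatt T_n(k)}{2k}-\bfi\hatt S_n(k)\bigr|^2\ge c$ for all $n\ge n_0$ and all $k\in[\sqrt{E_1},\sqrt{E_2}]$; by the formula for $\overset{\circ}{X}_n$ in Lemma~\ref{lem3.4} this yields $\overset{\circ}{X}_n(k)\ge c\,d_n^2$ on that range. Absorbing the finitely many terms with $m<n_0$ into a $k$-dependent constant $C_k$, we get
\[
\sum_{m=1}^{n}\overset{\circ}{X}_m(k)\ \ge\ c\sum_{m=1}^{n}d_m^2\ -\ C_k,\qquad k\in[\sqrt{E_1},\sqrt{E_2}].
\]

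Next I would bring in Lemma~\ref{lem3.5}. With $Q_n$ as in \eqref{qns}, it gives $\int_0^\infty g(k)\,|Q_n(k)|\,dk=o\bigl(\sum_{m=1}^{n}d_m^2\bigr)$; thus $|Q_n|\big/\sum_{m=1}^{n}d_m^2\to 0$ in $L^1(g\,dk)$, and after passing to a subsequence $\{n_j\}$ we may assume $|Q_{n_j}(k)|\big/\sum_{m=1}^{n_j}d_m^2\to 0$ for a.e.\ $k$ with respect to $g(k)\,dk$, hence for Lebesgue-a.e.\ $k\in[\sqrt{E_1},\sqrt{E_2}]$ (since $g>0$ on the interior). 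For each such $k$, using $Y_{n_j}(k)=Q_{n_j}(k)+\sum_{m=1}^{n_j}\overset{\circ}{X}_m(k)$,
\[
Y_{n_j}(k)\ \ge\ \Bigl(c-\frac{|Q_{n_j}(k)|}{\sum_{m=1}^{n_j}d_m^2}\Bigr)\sum_{m=1}^{n_j}d_m^2\ -\ C_k.
\]
Since $\sum_{m=1}^{n_j}d_m^2\to\infty$ (because $\sum_m d_m^2=\infty$ and $n_j\to\infty$) while the parenthesis tends to $c>0$, we conclude $Y_{n_j}(k)\to\infty$, i.e.\ $R(x_{n_j}+\Delta,k)=e^{Y_{n_j}(k)}\to\infty$, which is \eqref{rinf}.

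I do not anticipate a real obstacle here, since the heavy lifting---the second-order expansion of $\theta$ and the $L^1(g\,dk)$ cancellation of the oscillatory contributions $X_n$ and $\wti X_n$---is already contained in Lemmas~\ref{lem3.4} and~\ref{lem3.5}. The single point that must be watched is that the $(S,T)$-admissibility of $[\sqrt{E_1},\sqrt{E_2}]$ is exactly what makes the non-oscillatory term coercive, $\overset{\circ}{X}_n(k)\ge c\,d_n^2$ uniformly in $n$ and $k$, so that the divergence $\sum d_n^2=\infty$---the defining hypothesis of case (b)---is genuinely transmitted to $\log R$; without the uniform bound from Lemma~\ref{lem312}(ii) one could not exclude $\overset{\circ}{X}_n(k)$ being small on a set of positive measure.
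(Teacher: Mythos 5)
Your proof is correct and follows essentially the same route as the paper. The paper, after defining the same probability space $(\bbR_+, g(k)\,dk)$ and the pair $\xi_n=Q_n$, $\zeta_n=\sum_{m\le n}\overset{\circ}{X}_m$, invokes \cite[Lemma~6.1]{MR1628290} as a black box to obtain the divergent subsequence; you instead carry out that step by hand via the standard fact that $L^1(g\,dk)$-convergence of $\lvert Q_n\rvert/\alpha_n$ to $0$ (which is exactly what Lemma~\ref{lem3.5} supplies) gives a.e.\ convergence along a subsequence, and you combine this with the coercivity $\overset{\circ}{X}_n \ge c\,d_n^2$ coming from Lemma~\ref{lem312}(ii). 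This in-lining is a genuine simplification of presentation with no loss of rigor; you also correctly use the quadratic dependence $\overset{\circ}{X}_n(k) \asymp d_n^2$ (despite \eqref{xhat} displaying a single factor of $d_n$, which is a typographical slip; the derivation of $L$ in the proof of Lemma~\ref{lem3.4} makes the $d_n^2$ scaling clear, and only with $d_n^2$ is $\overset{\circ}{X}_n$ nonnegative for $d_n$ of either sign).
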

\begin{proof}
Let $g\in C_0^{\infty}(0,\infty)$ be a strictly positive function with $[\sqrt{E_1}, \sqrt{E_2}]\subset \supp(g)\subset J$, for an $(S, T)-$admissible $J$, and $\int_{\bbR_+} g(k) dk=1$. Consider two sequences  $\xi_n:=Q_n$, cf. \eqref{qns}, 
$\zeta_n:=\sum_{m=1}^{n}\overset{\circ}{X}_m$, cf. \eqref{xhat}, of random variables in the probability space $(\Omega, \bbP):=((0,\infty), g(k)dk)$, and denote $\alpha_n:= \sum_{m=1}^{n}d_m^2$. Lemma \ref{lem3.5} and \eqref{newcondition} yield
\begin{align}
\lim\limits_{n\rightarrow\infty}\alpha_n^{-1}\bbE \xi_n=0\qquad \text{and}\qquad \zeta_n\geq C\alpha_n.
\end{align}
Then, by \cite[Lemma 6.1 (i), (ii')]{MR1628290}, there exists a subsequence $\{n_j\}_{j\geq 1}$ such that for almost every $k\in\cI$,
\begin{equation}
\lim\limits_{j\rightarrow\infty} (\xi_{n_j}(k)+\zeta_{n_j}(k))=\infty;
\end{equation}
pagthat is, $\lim\limits_{j\rightarrow\infty} Y_{n_j}(k)=\infty$ and therefore \eqref{rinf} holds as claimed. 
\end{proof}

\begin{proof}[Proof of Theorem \ref{sparseAltnewtheorem} (b)]\lb{pageb}
By Lemma \ref{lem2.4}, $\spec_{\ess}(H^{\alpha})=[0,\infty)$. Moreover, by Proposition \ref{prop3.9},  $H^{\alpha}$ has no positive eigenvalues.
	
	For every $(S, T)-$admissible interval $[\sqrt{E_1}, \sqrt{E_2}]$, by Lemma \ref{lem317} for some subsequence $\{n_j\}_{j=1}^{\infty}$ we have 
	\begin{equation}
		\lim\limits_{j\rightarrow\infty} R(x_{n_j}+\Delta, k)=\infty,\ a.e.\  k\in[\sqrt{E_1}, \sqrt{E_2}].
	\end{equation} Next, by Theorem \ref{cor2.11}, $\spec_{\ac} (H^{\alpha})\cap [E_1, E_2]=\emptyset$ and, since the union of all $(S, T)-$admissible intervals gives $\bbR_+$ up ot a discrete set, we conclude $\spec_{\ac} (H^{\alpha})=\emptyset$. Therefore, the spectrum of $H^{\alpha}$ is purely singular continuous on $(0,\infty)$. 
\end{proof}


\end{document}